\documentclass{article}

\usepackage[margin=1in]{geometry}
\usepackage{parskip}

\usepackage[T1]{fontenc}
\usepackage{mathtools} 
\usepackage{amssymb,amsthm}

\usepackage{aliascnt}
\usepackage{hyperref}
\usepackage{orcidlink}
\usepackage{graphicx}
\usepackage{caption}
\usepackage[inline]{enumitem}
\setlist[enumerate]{label=\textit{(\roman*)}}
\usepackage{xcolor}
\hypersetup{
	colorlinks=true,
	linkcolor=blue,
	citecolor=blue,
	urlcolor=blue
}

\usepackage{authblk}

\usepackage{tikz}
\usetikzlibrary{positioning,math,patterns.meta}

\theoremstyle{plain}
\newtheorem{theorem}{Theorem}[section]
\newaliascnt{prop}{theorem}
\newtheorem{prop}[prop]{Proposition}
\aliascntresetthe{prop}
\newaliascnt{lemma}{theorem}
\newtheorem{lemma}[lemma]{Lemma}
\aliascntresetthe{lemma}
\newaliascnt{corollary}{theorem}
\newtheorem{corollary}[corollary]{Corollary}
\aliascntresetthe{corollary}

\newtheorem{maintheorem}{Theorem} 

\newaliascnt{maincorollary}{maintheorem}
\newtheorem{maincorollary}[maincorollary]{Corollary} 
\aliascntresetthe{maincorollary}

\theoremstyle{definition}
\newaliascnt{defn}{theorem}
\newtheorem{defn}[defn]{Definition}
\aliascntresetthe{defn}
\newaliascnt{example}{theorem}
\newtheorem{example}[example]{Example}
\aliascntresetthe{example}

\theoremstyle{remark}
\newaliascnt{remark}{theorem}
\newtheorem{remark}[remark]{Remark}
\aliascntresetthe{remark}
\newtheorem*{remark*}{Remark} 

\usepackage[nameinlink,noabbrev,capitalise]{cleveref}
\Crefname{prop}{Proposition}{Propositions}
\Crefname{defn}{Definition}{Definitions}
\Crefname{maintheorem}{Theorem}{Theorems}
\Crefname{maincorollary}{Corollary}{Corollaries}
\Crefname{subsection}{Subsection}{Subsections}

\newcommand{\bbE}{\mathbb{E}}
\newcommand{\bbP}{\mathbb{P}}
\newcommand{\cD}{\mathcal{D}}
\newcommand{\cF}{\mathcal{F}}
\newcommand{\cM}{\mathcal{M}}

\DeclareMathOperator{\Kn}{\mathbf{K}}
\DeclareMathOperator{\dist}{dist}
\DeclareMathOperator{\med}{med}
\DeclareMathOperator{\gap}{gap}
\DeclareMathOperator{\hyp}{hyp}
\DeclareMathOperator{\diam}{diam}
\DeclareMathOperator{\supp}{supp}
\DeclareMathOperator{\blocks}{\mathcal{B}}
\DeclareMathOperator{\cut}{cut}
\DeclareMathOperator{\bctree}{BC}
\DeclareMathOperator{\dis}{dis}
\DeclareMathOperator{\vr}{VR}
\DeclareMathOperator{\summ}{sum}

\newcommand{\R}{\mathbb{R}}
\newcommand{\Z}{\mathbb{Z}}
\newcommand{\Sp}{\mathbb{S}}

\newcommand{\W}{\mathsf{W}}
\newcommand{\GH}{\mathsf{GH}}

\newcommand{\Dvr}[1]{\mathbf{D}_{#1}^{\mathrm{VR}}}
\newcommand{\Uvr}[1]{\mathbf{U}_{#1}^{\mathrm{VR}}}
\newcommand{\dgmVR}{\operatorname{dgm}^{\mathrm{VR}}}

\newcommand{\empirical}[3]{\hat{#1}_{#2,#3}}
\newcommand{\setcolon}{:}

\newcommand{\gp}[3]{(#1 \cdot #2)_{#3}}

\title{Distributions of metric $n$-point invariants\\ and applications to molecular evolution}
\date{}

\author[1]{Michael Bleher\,\orcidlink{0000-0002-7796-1665}}
\author[2]{Mario G{\'o}mez\,\orcidlink{0000-0001-6143-8995}}
\author[3]{Facundo M{\'e}moli\,\orcidlink{0000-0001-8409-0549}}
\author[1,4]{Maximilian Neumann\,\orcidlink{0009-0004-8028-1160}}
\author[1]{Andreas Ott\,\orcidlink{0000-0002-2342-6907}}

\affil[1]{Institute for Mathematics, Heidelberg University, Germany}
\affil[2]{Department of Mathematics, Florida State University, USA}
\affil[3]{Department of Mathematics, Rutgers University, USA}
\affil[4]{Institute for Biological Interfaces 5, Karlsruhe Institute of Technology, Germany}
\affil[ ]{\footnotesize
	\href{mailto:mbleher@mathi.uni-heidelberg.de}{mbleher@mathi.uni-heidelberg.de},
	\href{mailto:mrgomez304@gmail.com}{mrgomez304@gmail.com},
	\href{mailto:facundo.memoli@rutgers.edu}{facundo.memoli@rutgers.edu},
	\href{mailto:mneumann@mathi.uni-heidelberg.de}{mneumann@mathi.uni-heidelberg.de},
	\href{mailto:aott@mathi.uni-heidelberg.de}{aott@mathi.uni-heidelberg.de}
	}

\begin{document}

\maketitle

\begin{NoHyper}
	\renewcommand{\thefootnote}{}%
	\footnotetext{\emph{2020 Mathematics Subject Classification.} Primary 60B10; Secondary 53C23, 62R20, 92D15.}%
	\footnotetext{\emph{Keywords.} Metric measure spaces, curvature measures, Wasserstein distance, empirical measures, hyperbolicity, molecular evolution.}%
\end{NoHyper}

\begin{abstract}
Datasets often arise as samples from a metric measure space whose geometry reflects the data-generating process.
We introduce metric $n$-point invariants, defined as functions of the pairwise distances of $n$-point configurations, to study the geometry of such spaces.
This provides a unified framework for existing concepts, such as the pairwise distance itself, the Gromov product, and Vietoris--Rips persistence diagrams.
The distribution of a metric $n$-point invariant $F$ over random $n$-point configurations can be used to extract geometric information from the space and is obtained as the pushforward of Gromov's $n$-th curvature measure $\mu_n$ along~$F$.
Our main result is an explicit bound for the expected $p$-Wasserstein distance between $\mu_n$ and the empirical curvature measure $\hat{\mu}_{n}$ of finitely many sampled $n$-point configurations, and carries over to the associated distributions of Lipschitz metric $n$-point invariants.
From this we derive the sample size needed to achieve a prescribed accuracy and level of confidence in the approximation of the curvature measure.
As an application, we use metric $n$-point invariants to study molecular evolution.
We describe how site-wise nucleotide frequencies shape the corresponding distributions in Jukes--Cantor and Halpern--Bruno substitution models, and analyse five million SARS\nobreakdash-CoV\nobreakdash-2 genomes, revealing increasingly common failures of the four-point condition, constrained nucleotide accessibility consistent with the biased mutation spectrum, and rare cycles in persistent homology reflecting recurrent mutations across lineages.
These findings demonstrate the potential of our sampling-based approach to the geometry of data as a scalable, phylogeny-free tool for detecting and quantifying evolutionary structure in large genomic datasets.
\end{abstract}

\section{Introduction}

Let $(X,d_X,\mu_X)$ be a bounded metric measure space (mm-space), meaning that $(X,d_X)$ is a Polish space of finite diameter $\diam(X)$ and $\mu_X$ is a Borel probability measure on $X$, not necessarily fully supported.
Many classical summaries of the geometry of $X$ are obtained by evaluating the distances between pairs, triples, or $n$-tuples of points and then taking an average or an extremum.
Instead of reducing these values to a single number, one can retain their full distribution over random configurations drawn from $\mu_X$.
This distributional viewpoint is particularly natural when $X$ is accessible only through samples, either because $X$ itself is unknown or because it is too large to compute all pairwise distances, as is increasingly the case for modern datasets.

To treat all quantities that depend only on the distances among $n$ points in a unified way, we follow Gromov \cite{Gromov2007} and assign to an $n$-tuple $\mathbf{x}=(x_1,\ldots,x_n)$ its distance matrix:
\begin{align*}
	\Psi_n \colon X^n \longrightarrow \R^{n\times n},
	\quad
	\Psi_n(\mathbf{x})_{ij} \coloneq d_X(x_i,x_j).
\end{align*}
The image of this map, $\Kn_n(X)\coloneq\Psi_n(X^n)$, is known as the $n$-th curvature set (\Cref{subsec:prelim-curvature}), and the pushforward
\begin{align*}
	\mu_n &\coloneq (\Psi_n)_\#\mu_X^{\otimes n}
\end{align*}
is called the $n$-th curvature measure.
The measure $\mu_n$ records the geometry of $X$ as seen by random $n$-point configurations in $X$.
The full family $(\mu_{n})_{n\geq 2}$ determines the mm-space up to the natural measure-preserving notion of isometry by the Gromov--Vershik reconstruction theorem \cite[Sec.~$3\frac{1}{2}.5$ \& $3\frac{1}{2}.7$]{Gromov2007}; see also \cite[Cor.~5]{Vershik2004}.

Write $\Kn_n\subset\R^{n\times n}$ for the set of all distance matrices on $n$ points, also known as the \emph{$n$-th metric cone}.
We define a \emph{metric $n$-point invariant} with values in a Polish space $Z$ , or \emph{$n$-invariant} for short (\Cref{sec:n-point-invariants}), to be a measurable function
\begin{align*}
	F\colon\Kn_n\to Z .
\end{align*}
In other words, an $n$-invariant maps the pairwise distances among $n$ points to a value in $Z$.
Examples include the pairwise distance, the area of a triangle, the four-point hyperbolicity deficit, and the persistence diagram of $n$ points, each capturing a particular geometric property of an $n$-point configuration.

The distribution of $F$ over random $n$-tuples of $X$ is the probability measure on $Z$ given by the pushforward of the curvature measure of $X$,
\begin{align*}
	\nu_F &\coloneq F_\#\mu_n.
\end{align*}
Throughout, we suppress the dependence of $\mu_n$ and $\nu_F$ on the mm-space $(X,d_X,\mu_X)$ whenever it is clear from context and record it in the subscript otherwise, as in $\mu_{n,X}$, $\nu_{F,X}$, or $\nu_{F,\mu}$.

In data analysis, the geometry of $X$ is expected to carry information about the data-generating process and is often studied through the distributions of $n$-invariants and their statistical summaries, such as the mean, median, or quantiles.
Each $\nu_F$ is simpler and often more tractable than $\mu_n$, but describes only one aspect of the geometry of $n$-point configurations.
In this article, we provide sample sizes that guarantee the approximation of $\mu_n$ itself from samples of $n$-tuples and show that they carry over to distributions of all Lipschitz $n$-invariants, including those with high- or infinite-dimensional target spaces, such as persistence diagrams.
Given independent samples $\mathbf{x}^{(1)},\ldots,\mathbf{x}^{(N)}\sim\mu_X^{\otimes n}$, denote the empirical curvature measure by
\begin{align*}
	\empirical{\mu}{n}{N}
	&\coloneq \frac{1}{N}\sum_{i=1}^N\delta_{\Psi_n(\mathbf{x}^{(i)})},
\end{align*}
where we write $\delta_y$ for the Dirac measure at a point $y$.
For every $n$-invariant $F$, the same sample then yields $\empirical{\nu}{F}{N}\coloneq F_\#\empirical{\mu}{n}{N}$, the empirical distribution of the values of $F$ on the sampled $n$-tuples.

\paragraph{Uniform approximation of the geometry of $n$-point configurations.}
Our main result is an explicit bound for the expected $p$-Wasserstein distance $\W_p$ between the empirical curvature measure $\empirical{\mu}{n}{N}$ and the true curvature measure $\mu_n$.
It is obtained by applying the Euclidean empirical-measure convergence rates of Fournier and Guillin \cite{convergence_empirical_fournier}, with the explicit constants due to Fournier \cite{Fournier2023}, to $\mu_n$ regarded as a measure on $\R^d$.
\begin{maintheorem}
	\label{thm:main-sampling}
	Fix $1\leq p<\infty$ and $n\geq2$, set $d\coloneq\binom{n}{2}$, and suppose that $p\neq d/2$.
	Let $(X,d_X,\mu_X)$ be a bounded mm-space with fully supported probability measure.
	Then the expected $p$-Wasserstein distance between the empirical curvature measure $\empirical{\mu}{n}{N}$ and the true curvature measure $\mu_n$ satisfies
	\begin{align*}
		\bbE\!\left[\W_p(\empirical{\mu}{n}{N},\mu_n)^p\right]
		\leq C(d,p)\diam(X)^pN^{-e_{d,p}},
		\quad e_{d,p}
		\coloneq
		\begin{cases}
			1/2, & p>d/2,\\
			p/d, & p<d/2,
		\end{cases}
	\end{align*}
	where $C(d,p) \coloneq \dfrac{2^{d/2-1}}{1 - 2^{d/2-p}}$ if $d < 2p$ and $C(d,p) \coloneq \dfrac{2^{p-2p/d}(1 - 2^{-d/2})^{1-2p/d}}{1-2^{p-d/2}}$ if $d > 2p$.
\end{maintheorem}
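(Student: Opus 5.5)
The plan is to regard $\mu_n$ as a Borel probability measure on $\R^d$, $d=\binom{n}{2}$. We identify a distance matrix with its vector of strictly upper-triangular entries $(\Psi_n(\mathbf{x})_{ij})_{i<j}$. This map is injective on symmetric matrices with zero diagonal, so no information is lost. We take the metric on $\R^{n\times n}$ (or on $\Kn_n$) to be the one induced by this identification, for instance the $\ell^\infty$ or Euclidean norm, matching the convention Fournier uses in \cite{Fournier2023}. The empirical curvature measure $\empirical{\mu}{n}{N}$ is then exactly the empirical measure of $N$ i.i.d.\ samples $Y_i=\Psi_n(\mathbf{x}^{(i)})$ drawn from $\mu_n$. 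This holds because the $\mathbf{x}^{(i)}$ are i.i.d.\ with law $\mu_X^{\otimes n}$ and $\Psi_n$ is continuous, hence measurable. Wasserstein distances between measures on $\Kn_n$ coincide with those computed in $\R^d$, since $\Kn_n$ carries the subspace metric and both measures are supported on it.

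Next we localise the support. Every entry satisfies $0\le d_X(x_i,x_j)\le \diam(X)$, so $\supp\mu_n\subset[0,\diam(X)]^d$, a cube of side length $\diam(X)$. We then invoke the explicit non-asymptotic bound of Fournier \cite{Fournier2023}, which refines Fournier--Guillin \cite{convergence_empirical_fournier}. For a probability measure supported in the unit cube $[0,1]^d$ (or a ball of radius $1$, depending on the normalisation used there), and for $p\neq d/2$, it gives
\begin{align*}
\bbE\!\left[\W_p(\hat\mu_N,\mu)^p\right]\le C(d,p)\,N^{-e_{d,p}},
\end{align*}
with precisely the constants stated in the theorem, obtained from the dyadic-partition argument.

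To handle a general cube we rescale. Let $T(y)=y/\diam(X)$. Pushforward along $T$ commutes with forming empirical measures, and $\W_p(T_\#\alpha,T_\#\beta)=\diam(X)^{-1}\W_p(\alpha,\beta)$. Multiplying the unit-cube bound by $\diam(X)^p$ therefore yields the claimed inequality. The degenerate case $\diam(X)=0$ is trivial, since then $\mu_n$ is a single Dirac mass. Full support of $\mu_X$ is not really needed for the argument; it only ensures that $\diam(X)$ equals the diameter of $\supp\mu_X$.

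The main obstacle is bookkeeping rather than new mathematics. We must check that the norm and cube normalisation in Fournier's theorem match ours, so that the constant $C(d,p)$ transfers verbatim with factor $\diam(X)^p$. For example, if Fournier works in a cube of side $1$ with the $\ell^\infty$ norm, the diameter entering is $1$. If instead the Euclidean norm is used, an extra $\sqrt d$ could appear unless the constant already absorbs it. We would first fix the norm on $\R^{n\times n}$ to be the one used in \cite{Fournier2023}, restricted to upper-triangular entries, and verify that the stated formulas for $C(d,p)$ in both regimes $d<2p$ and $d>2p$ are exactly those in that reference.
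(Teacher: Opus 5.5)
Your plan is essentially the paper's proof: apply Fournier's explicit bound to $\mu_n$ viewed on $\R^{d}$ and control its support by $\diam(X)$. The paper does the latter via $R_\infty(\Kn_n(X))=\diam(X)/2$, which is exactly the $\ell^\infty$-radius of your cube $[0,\diam(X)]^d$ about its centre.

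The bookkeeping you defer resolves as it does in the paper. The stated $C(d,p)$ is Fournier's $\kappa^{(\infty)}_{d,p}$, so you must commit to $\|\cdot\|_\infty$ and not the Euclidean norm. You must also centre the cube, or use translation-invariant moments, so that the radius $\diam(X)/2$ cancels the factor $2^p$ in Fournier's Thm.~2.1. Finally, let $q\to\infty$ so that $\theta^{(\infty)}_{d,p,q}\downarrow 1$.
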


\begin{maincorollary}
	\label{cor:main-sampling-invariant}
	Under the assumptions of \Cref{thm:main-sampling}, let $F \colon \Kn_n \to Z$ be a Lipschitz metric $n$-point invariant.
	Then
	\begin{align*}
		\bbE\!\left[\W_p\bigl(\empirical{\nu}{F}{N},\nu_F\bigr)^p\right]
		&\leq C(d,p)\operatorname{Lip}(F)^p\diam(X)^pN^{-e_{d,p}}.
	\end{align*}
\end{maincorollary}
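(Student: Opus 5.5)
The corollary follows from \Cref{thm:main-sampling} once we know that Wasserstein distances contract under Lipschitz pushforwards. The plan is to prove the deterministic inequality
\begin{align*}
	\W_p(F_\#\alpha, F_\#\beta) \leq \operatorname{Lip}(F)\,\W_p(\alpha,\beta)
\end{align*}
for all probability measures $\alpha,\beta$ on $\Kn_n$ with finite $p$-th moments. We then apply it with $\alpha=\empirical{\mu}{n}{N}$ and $\beta=\mu_n$, recalling that $\empirical{\nu}{F}{N}=F_\#\empirical{\mu}{n}{N}$ and $\nu_F=F_\#\mu_n$ by definition. Here $\Kn_n\subset\R^{n\times n}$ carries the metric inherited from $\R^d$ that is used in \Cref{thm:main-sampling}; this is the same metric with respect to which $\operatorname{Lip}(F)$ is measured. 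The target $Z$ carries its own Polish metric $d_Z$.

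For the contraction inequality, take any coupling $\pi$ of $\alpha$ and $\beta$ on $\Kn_n\times\Kn_n$. The pushforward $(F\times F)_\#\pi$ is a coupling of $F_\#\alpha$ and $F_\#\beta$, because its marginals are the pushforwards of the marginals of $\pi$. Measurability is no issue, since $F$ is Lipschitz and hence continuous. Changing variables gives
\begin{align*}
	\int_{Z\times Z} d_Z(z,z')^p\, d(F\times F)_\#\pi(z,z')
	= \int d_Z(F(a),F(b))^p\, d\pi(a,b)
	\leq \operatorname{Lip}(F)^p\int \|a-b\|^p\, d\pi(a,b).
\end{align*}
Taking the infimum over $\pi$ on the right-hand side yields $\W_p(F_\#\alpha,F_\#\beta)^p\leq\operatorname{Lip}(F)^p\W_p(\alpha,\beta)^p$. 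Finite $p$-th moments hold automatically: $\mu_n$ and $\empirical{\mu}{n}{N}$ are supported in the bounded set $\Kn_n(X)$, and their images under the Lipschitz map $F$ have bounded support in $Z$.

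Applying the inequality pointwise for each realization of the samples $\mathbf{x}^{(1)},\ldots,\mathbf{x}^{(N)}$ gives
\begin{align*}
	\W_p\bigl(\empirical{\nu}{F}{N},\nu_F\bigr)^p\leq \operatorname{Lip}(F)^p\,\W_p(\empirical{\mu}{n}{N},\mu_n)^p .
\end{align*}
Taking expectations and invoking \Cref{thm:main-sampling} gives the claimed bound $C(d,p)\operatorname{Lip}(F)^p\diam(X)^pN^{-e_{d,p}}$.

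The only step needing care is consistency of metrics. The norm on $\R^{n\times n}$, or on $\R^d$ via the upper-triangular entries, used to define $\operatorname{Lip}(F)$ must match the one in \Cref{thm:main-sampling}, since otherwise an extra norm-comparison constant appears. Measurability of $\W_p(\empirical{\nu}{F}{N},\nu_F)$ as a random variable is standard: it is a continuous function of the sample points.
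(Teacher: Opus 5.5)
Your proposal is correct and follows the paper's own proof: the paper applies \Cref{lemma:wasserstein_lipschitz} (the same coupling-pushforward contraction you prove) to each sample realization, then takes expectations and invokes \Cref{thm:main-sampling}. The metric-consistency point you raise is settled in the paper. There $\operatorname{Lip}(F)$ is defined with respect to $\|\cdot\|_\infty$, and the theorem embeds $\Kn_n(X)$ in $(\R^d,\|\cdot\|_\infty)$; these norms coincide by \Cref{lemma:metrics_Kn}, so no extra comparison constant appears.
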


\begin{maincorollary}
	\label{cor:main-sampling-invariant-bounds}
	Choose an error margin $\epsilon>0$ and a confidence level $0<\alpha<1$.
	Under the assumptions of \Cref{thm:main-sampling}, there is an explicit $N_\ast=N_\ast(n,p,\epsilon,\alpha)$ such that, for every $N>N_\ast$, with probability at least $\alpha$,
	\begin{align*}
		\W_p(\empirical{\mu}{n}{N},\mu_n)
		&\leq\epsilon\diam(X)
	\end{align*}
	and, uniformly over all Lipschitz metric $n$-point invariants $F\colon\Kn_n\to Z$,
	\begin{align*}
		\W_p\bigl(\empirical{\nu}{F}{N},\nu_F\bigr)
		&\leq\epsilon\operatorname{Lip}(F)\diam(X).
	\end{align*}
\end{maincorollary}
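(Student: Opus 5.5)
The plan is to combine \Cref{thm:main-sampling} with Markov's inequality, and then to pass to invariants by a pushforward estimate that does not depend on $F$. The key point is that the bound for $\nu_F$ will follow deterministically from the bound for $\mu_n$ on the same sample. This is what makes it uniform over all Lipschitz $F$ without any union bound.

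\textbf{Step 1 (tail bound).} Set $W\coloneq\W_p(\empirical{\mu}{n}{N},\mu_n)^p$, a nonnegative random variable. Markov's inequality and \Cref{thm:main-sampling} give
\begin{align*}
	\bbP\bigl[\W_p(\empirical{\mu}{n}{N},\mu_n)>\epsilon\diam(X)\bigr]
	=\bbP\bigl[W>\epsilon^p\diam(X)^p\bigr]
	\leq \frac{C(d,p)\diam(X)^pN^{-e_{d,p}}}{\epsilon^p\diam(X)^p}
	= C(d,p)\,\epsilon^{-p}N^{-e_{d,p}}.
\end{align*}
The factor $\diam(X)^p$ cancels, so the threshold depends only on $n,p,\epsilon,\alpha$. (If $\diam(X)=0$, then $\mu_n$ is a Dirac mass, so $\empirical{\mu}{n}{N}=\mu_n$ and everything is trivial.) Requiring the right-hand side to be at most $1-\alpha$ leads to the explicit choice
\begin{align*}
	N_\ast(n,p,\epsilon,\alpha)\coloneq\Bigl(\frac{C(d,p)}{(1-\alpha)\epsilon^p}\Bigr)^{1/e_{d,p}},
	\qquad d=\tbinom{n}{2}.
\end{align*}
For every $N>N_\ast$, the first inequality then holds with probability at least $\alpha$.

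\textbf{Step 2 (transfer to invariants).} For a Lipschitz map $F\colon\Kn_n\to Z$ and any probability measures $\mu,\nu$ on $\Kn_n$ with finite $p$-th moments, we have $\W_p(F_\#\mu,F_\#\nu)\leq\operatorname{Lip}(F)\,\W_p(\mu,\nu)$. To see this, take any coupling $\pi$ of $\mu,\nu$. Then $(F\times F)_\#\pi$ is a coupling of $F_\#\mu$ and $F_\#\nu$, and its cost is at most $\operatorname{Lip}(F)^p$ times the cost of $\pi$. Taking the infimum over $\pi$ gives the claim.

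Both measures here are supported in the bounded set $\Kn_n(X)$, so all moments are finite. Here $\Kn_n$ carries the metric used in \Cref{thm:main-sampling}, namely the Euclidean metric on $\R^d$ restricted to the upper-triangular entries. The Lipschitz constant of $F$ must be taken with respect to this same metric; I take that to be the convention behind \Cref{cor:main-sampling-invariant}, which uses exactly this inequality. Since $\empirical{\nu}{F}{N}=F_\#\empirical{\mu}{n}{N}$ and $\nu_F=F_\#\mu_n$, the following holds on the event from Step 1, simultaneously for every Lipschitz $F$:
\begin{align*}
	\W_p(\empirical{\nu}{F}{N},\nu_F)\leq\operatorname{Lip}(F)\,\epsilon\diam(X).
\end{align*}

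\textbf{Main obstacle.} The only real subtlety is the uniformity. A naive route would apply \Cref{cor:main-sampling-invariant} separately for each $F$, which would not control uncountably many invariants at once. Step 2 avoids this by working on the single event defined through $\mu_n$. One should also check measurability of $W$, which follows from continuity of $\W_p$ in the sample points. Finally, the threshold $N_\ast$ is explicit because the constant $C(d,p)$ in \Cref{thm:main-sampling} is explicit.
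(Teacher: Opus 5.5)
Your proof is correct. Your Step~2 is exactly how the paper obtains uniformity in $F$: a single event defined through $\mu_n$, followed by the deterministic bound $\W_p(F_\#\alpha,F_\#\beta)\leq\operatorname{Lip}(F)\,\W_p(\alpha,\beta)$.

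The routes differ in Step~1.

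\emph{Paper's route.} Where you apply Markov's inequality to $\W_p(\empirical{\mu}{n}{N},\mu_n)^p$, the paper (\Cref{lemma:empirical_sample_size}) uses McDiarmid's bounded-differences inequality in the form of \cite{WeedBach2019}, namely $\bbP\bigl[\W_p^p-\bbE[\W_p^p]\geq t\diam(X)^p\bigr]\leq e^{-2Nt^2}$. This is available because $\mu_n$ has compact support of $\ell^\infty$-diameter $\diam(X)$. The paper then takes $N_\ast$ to solve $C(d,p)N^{-e_{d,p}}+\sqrt{-\tfrac12\ln(1-\alpha)}\,N^{-1/2}=\epsilon^p$.

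\emph{Comparison.} Your argument is more elementary, since it needs nothing beyond the expectation bound, but it pays heavily in the confidence level. For $d>2p$ and $\alpha$ near $1$, your threshold $\bigl(C(d,p)/((1-\alpha)\epsilon^p)\bigr)^{1/e_{d,p}}$ exceeds the paper's by a factor of roughly $(1-\alpha)^{-1/e_{d,p}}$, e.g.\ $(1-\alpha)^{-6}$ for $n=4$, $p=1$. In the paper's bound, by contrast, $\alpha$ enters only through a subdominant term. In the paper's SARS\nobreakdash-CoV\nobreakdash-2 setting ($n=4$, $p=1$, $\epsilon=0.1$, $1-\alpha=0.05/15$), this is the difference between about $5\times10^{7}$ and about $4\times10^{22}$ quadruples. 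Both routes prove the statement as written, but only the concentration route gives usable sample sizes.

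One small correction: the metric in \Cref{thm:main-sampling} is $\|\cdot\|_\infty$ on $\R^{d}$, not the Euclidean one. This is also the norm in which $\operatorname{Lip}(F)$ is defined (\Cref{def:n_invariant}), so the consistency you flag is built into the paper's conventions. With the Euclidean norm, $C(d,p)\diam(X)^p$ would not be the right constant.
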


These results are proved in \Cref{sec:convergence-rates} as \Cref{thm:empirical_Kn,cor:empirical_Kn_invariant,cor:empirical_Kn_samples}.
The first corollary is just the pushforward along a Lipschitz function, while the second relies on a standard bounded-differences concentration argument and McDiarmid's inequality \cite{WeedBach2019}.

Approximating the full $n$-th curvature measure guarantees that the results in \Cref{cor:main-sampling-invariant,cor:main-sampling-invariant-bounds} hold uniformly over all Lipschitz $n$-invariants $F\colon\Kn_n\to Z$.
In the analysis of a concrete dataset, this allows us to determine the number $N_\ast$ of required $n$-tuples for a prescribed accuracy and confidence in advance,\footnote{A Python package that provides $N_\ast$ and computes the associated distributions of $n$-invariants from data is available at \url{https://github.com/subthaumic/n-invariants}.} and every subsequent analysis of the geometry of $n$-point configurations of $X$ based on a single sample of this size inherits the guarantee without further sampling or corrections for multiple comparisons.
This universality comes at a cost, as the convergence rates are governed by the rapidly growing dimension of the ambient Euclidean space that contains $\Kn_n$.
For a single $n$-invariant with a low-dimensional target, applying the same techniques to $\nu_F$ directly can substantially improve the convergence rate.
For example, for a real-valued $n$-invariant $F\colon\Kn_n\to\R$, these techniques yield the rate $N^{-1/2}$ (cf.\ \Cref{prop:empirical_real_invariant}), in line with classical results, e.g.\ \cite[Thm.~7.9]{BobkovLedoux2019}.
However, if $Z$ is high- or infinite-dimensional, as for persistence diagrams, such bounds may be poor or unavailable, and passing through $\mu_n$ is the natural route to explicit rates.

The ambient dimension $d=\binom{n}{2}$ of $\Kn_n \subset \mathbb{R}^d$ is often suboptimal since the curvature set $\Kn_n(X)$ in question may have much smaller intrinsic dimension.
Convergence rates for empirical measures in terms of intrinsic dimension go back to Dudley \cite{Dudley69} and have since been further developed using covering dimensions of sets of full or almost full measure \cite{convergence_empirical_covering_dim_2014,WeedBach2019,convergence_empirical_covering_dim_2026}.
We obtain the following refinement for Ahlfors $s$-regular spaces from bounds based on an intrinsic Wasserstein-dimension of measures due to Weed and Bach \cite{WeedBach2019}.
\begin{maintheorem}
	\label{thm:main-sampling-intrinsic}
	Fix $n\geq2$. Let $(X,d_X,\mu_X)$ be an Ahlfors $s$-regular mm-space.
	Then for all $1 \leq p \leq \frac{1}{2} d_M(\Kn_n(X))$ and $t>s$ there are constants $C_1(n,p,t)$ and $C_2(X,n,p,t)$ such that for all $N \geq 1$
	\begin{align*}
		\bbE\!\left[\W_p(\empirical{\mu}{n}{N},\mu_n)^p\right]
		&\leq \diam(X)^p \left( C_1 N^{-p/{nt}} + C_2 N^{-1/2} \right),
	\end{align*}
	where $d_M$ is the Minkowski dimension.
\end{maintheorem}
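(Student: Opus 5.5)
We will invoke the Weed--Bach bound, which is stated in terms of covering numbers. For a probability measure $\mu$ on a metric space of diameter at most $D$, let $\mathcal{N}_\epsilon(\mu,\tau)$ denote the minimal number of $\epsilon$-balls needed to cover a set of $\mu$-mass at least $1-\tau$. Their Proposition~5 yields
\begin{align*}
	\bbE\!\left[\W_p(\empirical{\mu}{n}{N},\mu_n)^p\right]\leq C\left(\epsilon_0^p+\sum_k 3^{-(k+1)p}\sqrt{\mathcal{N}_{3^{-(k+1)}}(\mu_n,0)/N}\right)
\end{align*}
after rescaling distances by $\diam(X)$. The upper Wasserstein dimension $d_p^\ast$ is controlled by $\mathcal{N}_\epsilon\lesssim\epsilon^{-t}$. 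Theorem~1 of Weed--Bach then gives
\begin{align*}
	\bbE\!\left[\W_p^p\right]\leq C_1 N^{-p/t'}+C_2N^{-1/2}
\end{align*}
whenever $\mathcal{N}_\epsilon(\mu_n)\le K\epsilon^{-t'}$ with $t'>2p$. The term $N^{-1/2}$ absorbs the coarse scales, and $C_2$ depends on $K$ and hence on $X$. This accounts for the two-term shape of the claimed bound. The hypothesis $p\le \tfrac12 d_M(\Kn_n(X))$ is the regime in which the dimension exponent, and not the parametric rate, governs, so that the Weed--Bach theorem applies with exponent $t'$.

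The plan is therefore to bound the covering numbers of $\supp\mu_n\subseteq\Kn_n(X)$ by $K\epsilon^{-nt}$ for every $t>s$. We equip $\R^{d}$ with the $\ell_\infty$ norm, adjusting constants to match the norm used in \Cref{thm:main-sampling}. The map $\Psi_n\colon (X^n,\max_i d_X)\to(\R^d,\ell_\infty)$ is $2$-Lipschitz, since $|d(x_i,x_j)-d(y_i,y_j)|\le d(x_i,y_i)+d(x_j,y_j)$. By Ahlfors $s$-regularity, $\mu_X(B(x,r))\ge c r^s$, and a standard packing argument gives the covering number bound $\mathcal{N}_\epsilon(X)\le C\epsilon^{-s}$. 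Taking products, $X^n$ is covered by $C^n\epsilon^{-ns}$ product balls of radius $\epsilon$, and their images cover $\Kn_n(X)$ by $\ell_\infty$-balls of radius $2\epsilon$. Hence $\mathcal{N}_\epsilon(\mu_n,0)\le C^n2^{ns}\epsilon^{-ns}$, which is at most $K\epsilon^{-nt}$ for $\epsilon\le1$ and any $t\ge s$.

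We then substitute this bound into Weed--Bach with $t'=nt$, taking $t>s$ strictly to leave room for their strict inequality $d_p^\ast<t'$. We must also verify $nt>2p$. This follows because $p\le\frac12 d_M(\Kn_n(X))\le\frac12 ns<\frac12 nt$, where the middle inequality comes from the covering bound just proved. Rescaling by $\diam(X)$ produces the factor $\diam(X)^p$. The constant $C_1$ depends only on $(n,p,t)$, and $C_2$ absorbs $K$, which depends on $X$ through the Ahlfors constants.

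The main obstacle is matching the exact hypotheses and constants of the Weed--Bach theorem. In particular, we must check that their theorem separates into a universal term $C_1N^{-p/t'}$ and an $X$-dependent $N^{-1/2}$ term. If it does not, we would rederive the dyadic sum directly. In that derivation, the scales $3^{-k}$ below the threshold $N^{-1/t'}$ contribute at most $\epsilon_0^p$. The scales above it contribute a geometric sum, which is dominated by its smallest-scale term when $t'>2p$. The fixed largest scale gives the $K$-dependent $N^{-1/2}$ term.
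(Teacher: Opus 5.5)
Your proposal is correct, and it ends where the paper's proof does: both rescale to diameter $1$ and apply \cite[Prop.~5]{WeedBach2019} to $\mu_n$ with exponent $nt$. That proposition supplies the universal $C_1$ and the term $C_2=(27/\epsilon')^{nt/2}$. The difference lies in how you verify its hypothesis.

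The paper goes through dimension theory. It shows $d_p^*(\mu_n)\le d_M(\Kn_n(X))\le ns$ in \Cref{prop:ahlfors-regularity}, using \cite[Prop.~2]{WeedBach2019}, which is where the assumption $p\le\frac12 d_M(\Kn_n(X))$ is needed. It then extracts a threshold $\epsilon'$ from the limsup in the definition of $d_p^*$, following the proof of \cite[Cor.~1]{WeedBach2019}, so that $\epsilon'$ is not explicit.

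You instead prove the covering estimate $\mathcal{N}_\epsilon(\Kn_n(X))\le K\epsilon^{-ns}$ directly, from the packing bound for Ahlfors regular $X$ and the $2$-Lipschitz property of $\Psi_n$. This is the argument behind $d_M(\Kn_n(X))\le ns$, but you keep the constant, and that buys two things:
\begin{enumerate}
\item $K$ is explicit in $n$, $s$ and the Ahlfors constant of the rescaled space. You can therefore take $\epsilon'=\min\{1,K^{-1/(n(t-s))}\}$, which makes $C_2$ explicit in these quantities. This is more than \Cref{rmk:concentration-inequalities} claims is available.
\item The only constraint on $p$ you actually use is $nt>2p$ for all $t>s$. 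Your argument therefore proves the theorem under the weaker hypothesis $2p\le ns$, which matters when $d_M(\Kn_n(X))<ns$, as for $\Sp^1$.
\end{enumerate}

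One point should be stated precisely. \cite[Prop.~5]{WeedBach2019} requires $\mathcal{N}_\epsilon\le\epsilon^{-nt}$ with no prefactor, for all $\epsilon\le\epsilon'$. In your route, the strict inequality $t>s$ is needed to absorb $K$ into $\epsilon'$, not to ensure $d_p^*<t'$, which is the paper's reason. This absorption is also what keeps $C_1$ independent of $X$. A direct dyadic estimate from $\mathcal{N}_\epsilon\le K\epsilon^{-t'}$ at a single exponent would instead put a factor $\sqrt{K}$ in front of $N^{-p/t'}$.
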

This result is proved in \Cref{sec:convergence-rates} as \Cref{thm:empirical_Kn_intrinsic_WB}.
The exponent $p/nt$ can improve upon $e_{d,p}=p/d$ whenever $s<(n-1)/2$.
Unlike $C(d,p)$, however, the constant $C_2$ depends on $X$ through a parameter that is not explicit, so \Cref{thm:main-sampling-intrinsic} in its current form cannot yield improvements in sample sizes.

\paragraph{Hyperbolicity distributions.}
The distribution of hyperbolicity deficits provides an interesting example of the information that may be carried by the distribution of an $n$-invariant.
The four-point hyperbolicity deficit $\hyp\colon\Kn_4\to\R$ (\Cref{inv:hyperbolicity}) measures how far a given quadruple is from satisfying the four-point condition for points on a tree \cite{Buneman1974}.
Its supremum over all quadruples of a metric space, known as its Gromov hyperbolicity \cite{Gromov1987}, detects obstructions to the space being a tree.
For a finite metric space $(X,d_X)$, the tight span $T(X,d_X)$ is a canonical geodesic metric space containing $X$ with the structure of a finite polytopal complex \cite[Sec.~3]{lang-injective-hulls}.
Splitting the tight span at its cut-vertices, the vertices whose removal disconnects it, yields a set of blocks $\blocks(X)$ (cf.~\cite{bc-tree}).
By a classical result of Dress \cite[Thm.~8]{dress-tight-span}, the deficit vanishes on all quadruples of $X$ if and only if $T(X,d_X)$ is a tree, i.e.\ all blocks $B\in\blocks(X)$ are intervals.
For a finite mm-space $(X,d_X,\mu_X)$, we may view each block $B$ as an mm-space with the measure $\mu_B\coloneq(P_B)_\#\mu_X$, where $P_B$ maps each point outside $B$ onto the closest point of $B$.
If $\mu_X$ has full support, Dress's theorem then amounts to the statement that $\nu_{\hyp,X}=\delta_0$ if and only if all blocks are intervals, which have $\nu_{\hyp,B}=\delta_0$ as discussed above.
The following result describes how $\nu_{\hyp,X}$ is assembled from the $\nu_{\hyp,B}$ more generally.
\begin{maintheorem}
	\label{thm:main-block-decomposition}
	Let $(X,d_X,\mu_X)$ be a finite mm-space.
	Then, for every Borel set $D\subset\R$ with $0\notin D$,
		\begin{align*}
			\nu_{\hyp,X}(D) = \sum_{B\in\blocks(X)}\nu_{\hyp,B}(D).
		\end{align*}
	Moreover, $\nu_{\hyp,X}(\{0\}) = 1-\sum_{B\in\blocks(X)}\nu_{\hyp,B}(\R\setminus\{0\})$.
\end{maintheorem}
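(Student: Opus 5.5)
We would prove \Cref{thm:main-block-decomposition} by decomposing the set of quadruples $X^4$ according to blocks, using the geometry of the tight span. Throughout, we view $X\subset T(X,d_X)$ and write $P_B\colon T(X,d_X)\to B$ for the nearest-point map onto a block $B$. The key geometric input is the standard structure of the block-cut tree of $T(X,d_X)$. Every block $B$ is a closed convex (hence geodesically convex) subset, and $P_B$ is well defined because $B$ is separated from the rest of the tight span by cut-vertices. Moreover, for $x,y$ in different components of $T\setminus B$ (or in $B$), $d(x,y)=d(x,P_B x)+d(P_B x,P_B y)+d(P_B y,y)$ whenever $P_Bx\neq P_By$. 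This is the analogue of the tree gluing formula and follows because any geodesic from $x$ to $y$ must pass through the relevant cut-vertices.

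\textbf{Step 1 (a quadruple sees at most one block).} For a quadruple $\mathbf{x}=(x_1,\dots,x_4)\in X^4$ we show the following. If $\hyp(\mathbf{x})\neq0$, then there is a unique block $B$ such that the four points $P_B x_i$ are pairwise "separated" in the sense that $\hyp(\mathbf{x})=\hyp(P_B\mathbf{x})$. For every other block $B'$ we have $\hyp(P_{B'}\mathbf{x})=0$.

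The equality $\hyp(\mathbf{x})=\hyp(P_B\mathbf{x})$ for every block $B$ is the first claim. By the gluing formula, each $d(x_i,x_j)$ equals $d(P_Bx_i,P_Bx_j)+h_i+h_j$, where $h_i=d(x_i,P_Bx_i)$, at least when the projections are distinct. When projections coincide, the cut-vertex argument gives $d(x_i,x_j)\leq h_i+h_j$ with the correction entering symmetrically. The three pair-sums in the four-point condition therefore all shift by the same constant $h_1+h_2+h_3+h_4$, so the deficit is unchanged.

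Uniqueness is the second claim. Consider the subtree of the block-cut tree spanned by the four points. The deficit is nonzero only if the four projections onto some block are such that no three of them coincide at a single cut-vertex. We argue that two distinct blocks cannot both have this property: if $B\neq B'$, then all of $B'$ projects to one cut-vertex of $B$. So if $\mathbf{x}$ spreads over $B'$, then at least three of the points project to a common point of $B$. A quadruple with a repeated point, say $P_Bx_i=P_Bx_j$ for two indices and the others possibly distinct, has zero deficit by the explicit formula for $\hyp$ with two equal points. The main obstacle is exactly here. We must check carefully that "spread" in two blocks is impossible, and that two coincident projections already force $\hyp=0$; the second is a direct computation from the definition in \Cref{inv:hyperbolicity}.

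\textbf{Step 2 (measure computation).} Let $A_B\subset X^4$ be the set of quadruples with $\hyp(P_B\mathbf{x})\neq0$. By Step 1 these sets are pairwise disjoint, and on $A_B$ we have $\hyp(\mathbf{x})=\hyp(P_B\mathbf{x})$. Their union is $\{\hyp\neq0\}$. Hence for Borel $D\not\ni0$,
\begin{align*}
\nu_{\hyp,X}(D)=\mu_X^{\otimes4}\bigl(\hyp^{-1}(D)\bigr)=\sum_{B}\mu_X^{\otimes4}\bigl(\{\mathbf{x}:\hyp(P_B\mathbf{x})\in D\}\bigr)=\sum_B\nu_{\hyp,B}(D),
\end{align*}
where the last equality uses $\mu_B^{\otimes4}=(P_B^{\times4})_\#\mu_X^{\otimes4}$ and the fact that $\hyp$ on $B$ is computed with the metric of $B$, which is the restriction of the tight-span metric. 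Finally, the statement for $\{0\}$ follows by taking $D=\R\setminus\{0\}$ and using that $\nu_{\hyp,X}$ is a probability measure.
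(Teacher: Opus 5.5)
\textbf{The existence step is missing.} Your architecture is the paper's: a projection lemma, uniqueness of the block that retains four distinct projections, and then exactly the summation of your Step 2. But your sentence ``Their union is $\{\hyp\neq0\}$'' needs the following: if $\hyp(\mathbf{x})\neq0$, then some block $B$ satisfies $|P_B(\mathbf{x})|=4$. You never prove this. You only assert that ``the deficit is nonzero only if the four projections onto some block are such that no three of them coincide''. That is unproved, and it is also too weak, since two coincident projections already give $\hyp(P_B\mathbf{x})=0$.

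Your ``first claim'' cannot fill this gap, because it is false. When $P_Bx_i=P_Bx_j$ you only have $d(x_i,x_j)\le h_i+h_j$, an inequality, so the three pair sums do not shift by a common constant. For instance, in \Cref{fig:tight_span_4_pts} with $a_1>0$, projecting onto the pendant edge $E_1$ sends $x_2,x_3,x_4$ to $c_1$. Then $\hyp(P_{E_1}\mathbf{x})=0$ while $\hyp(\mathbf{x})=\min(L_1,L_2)>0$. Invariance under every $P_B$ would also put each such $\mathbf{x}$ in every $A_B$, contradicting the disjointness you claim. The correct statement is the dichotomy of \Cref{lemma:hyp_projection}: equality if $|P_B(\mathbf{x})|=4$, deficit $0$ otherwise. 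Existence then needs a separate argument.

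\textbf{How the paper fills the gap.} \Cref{lemma:hyp_not_0} uses an ingredient your proposal lacks. By \Cref{prop:tight_span}\,\textit{(iv)}, the tight span of $\{x_1,\dots,x_4\}$ embeds isometrically in $T(X,d_X)$, compatibly with $X$. This tight span is the rectangle $[0,L_1]\times[0,L_2]$ with pendant edges $E_i$ from $x_i$ to the corners $c_i$.
\begin{enumerate}
\item By \Cref{lemma:tight_span_4_pts}, $\min(L_1,L_2)=\hyp(\mathbf{x})>0$, so the rectangle is nondegenerate. Removing one point leaves it connected, so it lies in a single block $B$.
\item The edges $E_i$ are pairwise disjoint.
\item A cut-vertex separating $x_i$ from $B$ must lie on the path $E_i$ from $x_i$ to $c_i\in B$. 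Hence $P_B(x_i)\in E_i$, and the four projections are distinct.
\end{enumerate}

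\textbf{A combinatorial alternative.} You could instead work in $\bctree(X)$. Suppose no block receives four distinct projections. Then the subtree spanned by the four points has a cut-vertex $c$ that is either its centre or realizes a $2$--$2$ split. Routing the cross distances through $c$ gives $s_{13,24}=s_{14,23}\geq s_{12,34}$ for a suitable labelling, hence $\hyp(\mathbf{x})=0$. This case analysis would still have to be written out.

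\textbf{Uniqueness.} Your uniqueness argument is essentially \Cref{lemma:block_projection}, once you make one point precise. Every $x_i$ with $P_{B'}(x_i)$ different from the cut-vertex of $B'$ facing $B$ is sent by $P_B$ to the cut-vertex of $B$ facing $B'$. At least three indices have this property, so at least two projections onto $B$ coincide.
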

In other words, every violation of the four-point condition in $X$ comes from exactly one block that is not an interval, and the contributions of these blocks add up.
Consequently, if $\mu_X$ has full support, the Gromov hyperbolicity of $X$ is determined by a single block, while $\nu_{\hyp,X}$ retains the full hyperbolicity distribution of every block.
This result and its proof appear in \Cref{sec:blockwise-sampling} as \Cref{thm:block_decomposition_hyp}.

\paragraph{Application to molecular evolution of SARS-CoV-2.}
Molecular evolution studies how evolutionary processes, such as mutation, recombination, and natural selection, are reflected in the genome of organisms.
Genomic sequences are typically represented as aligned words over the alphabet of nucleotides $A,C,G,T$.
Together with the Hamming distance, which counts the number of differing sites, and equipped with the observed genome frequencies, the data form an mm-space (\Cref{sec:hamming-sitewise}).
We study three Lipschitz $n$-invariants on this space.
\begin{enumerate*}[label=(\roman*), itemjoin=\quad, itemjoin*=\quad]
	\item The \emph{pairwise-distance distribution}, known in population genetics as the \emph{mismatch distribution}, records genetic separation between sampled genomes, and its shape has long been used to study demographic history \cite{RogersHarpending1992}.
	\item The distribution of \emph{four-point hyperbolicity deficits}, records the frequency and magnitude of departures from the four-point condition for points on a tree \cite{Buneman1974}.
	\item The distribution of \emph{degree-one persistence diagrams of four-point samples}, known as the \emph{$(4,1)$-persistence measure} \cite{Blumberg2014,GomezMemoli2024}, records the prevalence and scales of cycles and has been studied as a signature of reticulate evolution \cite{LesnickRabadanRosenbloom2020}. 
\end{enumerate*}
Our sampling guarantees make it possible to quantify how prevalent these configurations are and at which genetic scales they occur, without reconstructing a tree or network and without even computing the full distance matrix of the data.
This is a particular advantage for the millions of severe acute respiratory syndrome coronavirus 2 (SARS\nobreakdash-CoV\nobreakdash-2) genomes sequenced during the pandemic.
At this scale, even the full distance matrix would occupy several terabytes, and the low divergence of the sequences further complicates reliable phylogenetic inference \cite{MorelEtAl2021}.

We first investigate the distributions one obtains from the uniform measure, the Jukes--Cantor model \cite{JukesCantor1969}, and the Halpern--Bruno model \cite{HalpernBruno1998} (\Cref{subsec:hamming-uniform,sec:simulated-evolution}).
In all three models, sites are independent, so the mismatch distribution is Poisson--binomial and depends only on how likely two random genomes are to differ at each site (\Cref{sec:hamming-sitewise}).
In particular, it records how variable each site is, but not whether this variability involves two or more nucleotides.
In contrast, the hyperbolicity and $(4,1)$-persistence distributions are sensitive to the frequencies of the individual nucleotides at each site.
For example, a half-integer hyperbolicity deficit can only occur if there is a site where the four genomes carry exactly three different nucleotides.

We then analyse a dataset of approximately five million SARS\nobreakdash-CoV\nobreakdash-2 genomes shared via GISAID, the global data science initiative \cite{Elbe2017}, and compute the three distributions in cumulative samples at five collection-date cutoffs, from the early outbreak to the era of Omicron recombinants (\Cref{subsec:sars-cov2}).
We find that over time the mismatch distribution shifts upward and becomes increasingly overdispersed and multimodal.
This shape is consistent with the accumulation of genetically separated lineage clusters as successive variants emerge.
The estimated fraction of quadruples violating the four-point condition rises from about $2\%$ at the early stages to $45\%$ towards the end of the pandemic.
This is in line with the many documented recurrences of mutations across lineages \cite{vanDorpEtAl2020,MartinEtAl2021}, since already a single mutation shared by genomes from two different lineages can cause such a violation.
Half-integer hyperbolicity deficits remain suppressed relative to the uniform reference, pointing to constrained triallelic accessibility and consistent with the strongly biased SARS\nobreakdash-CoV\nobreakdash-2 mutation spectrum \cite{Simmonds2020,DeMaioEtAl2021}.
Nontrivial $(4,1)$-persistence classes, which occur only in such non-tree-like quadruples, are much rarer, occurring in fewer than one in a thousand quadruples, but appear consistently at every collection-date cutoff.
Their birth times follow the dominant modes of the mismatch distribution as the lineage clusters separate, while their lifetimes remain short, with most classes having the minimal lifetime $1$.
In SARS\nobreakdash-CoV\nobreakdash-2, such short-lived $(4,1)$-persistence classes formed by single-nucleotide variants (SNVs) have been used to detect recurrent mutations within lineages and shown to correlate with adaptation \cite{BleherEtAl2021}.
The short-lived $(4,1)$-persistence classes observed here are their between-lineage counterpart, typically formed by a single SNV that arose independently in two lineage clusters.
The rarer classes with longer lifetimes record more pronounced cyclic geometry across several Hamming scales and reflect more complex reticulate configurations, including recombination, which has previously been documented to occur in SARS\nobreakdash-CoV\nobreakdash-2 \cite{TurakhiaEtAl2022}.
Taken together, these findings demonstrate the potential of our sampling-based approach to the geometry of $n$-point configurations as a scalable, phylogeny-free tool for detecting and quantifying evolutionary structure in large genomic datasets.

\paragraph{Organization.}
\Cref{sec:related-work} places $n$-invariants and their distributions in the context of the existing literature.
\Cref{sec:preliminaries} recalls standard definitions and preliminary results.
\Cref{sec:n-point-invariants} introduces $n$-invariants, their distributions, and the main examples.
\Cref{sec:geometry-Kn} establishes the geometric properties of $\Kn_n(X)$ that \Cref{sec:convergence-rates} uses to prove the main results \labelcref{thm:main-sampling}--\labelcref{thm:main-sampling-intrinsic}.
\Cref{sec:blockwise-sampling} proves \Cref{thm:main-block-decomposition} on the block decomposition of hyperbolicity distributions.
\Cref{sec:hamming} studies Hamming geometry, substitution models, and the SARS\nobreakdash-CoV\nobreakdash-2 data.

\section{Related work}
\label{sec:related-work}

Distributions of finite-configuration data have appeared under several names and in several areas.
The following non-exhaustive list illustrates some classical and recent uses of distributions of $n$-invariants in geometry, statistics, data analysis, and phylogenetics.

\paragraph{Distance distributions.}
The simplest example of an $n$-invariant is the pairwise distance $(n=2)$, whose distribution $\nu_{\dist,X}$ has long been studied in geometric probability \cite{Deltheil1926,Hammersley1950}.
Boutin and Kemper show that this distribution determines generic configurations of at least $m+2$ points in $\R^m$ up to isometry \cite{Boutin2004}.
Here genericity means that the distances lie outside the zero set of a constructed nonzero polynomial, a closed set of measure zero that contains, for instance, all configurations with a repeated distance and all configurations with a nontrivial symmetry.
In shape analysis, the same distribution is known as the $D2$ shape distribution \cite[Sec.~4.1]{OsadaEtAl2002}, and it is closely related to other classical statistics such as the $K$-function in spatial statistics \cite[Ch.~8]{Ripley1981}.
A classical way to represent $\nu_{\dist,X}$ is by its cumulative distribution function
\begin{align*}
	H_X(r) \coloneq \int_X \mu_X(B_X(x,r)) \, d\mu_X(x) = \nu_{\dist,X}([0,r]), \qquad r \geq 0.
\end{align*}
$H_X(r)$ records how often two sampled points lie within distance $r$, so it measures the average $\mu_X$-mass of an $r$-ball.
For Riemannian manifolds, the small-radius behaviour of $H_X$ is related to scalar curvature and $H_X$ similarly provides a signature of the curvature of metric measure spaces \cite[Def.~5.4]{Memoli2011}.
The third author and Needham systematically study how much geometric information is retained by global and local distance distributions beyond subsets of Euclidean space \cite{MemoliNeedham2022}.
Recently, cumulative distance distributions have found applications in parameter inference for systems with chaotic or heterogeneous output using the Correlation Integral Likelihood framework \cite{GwiazdaEtAl2025}.

\paragraph{Average hyperbolicity.}
Chatterjee and Sloman \cite{ChatterjeeSloman2021} recently proposed an average counterpart to Gromov hyperbolicity.
While the supremum of the four-point deficits $\hyp(\mathbf{x})$ (\Cref{inv:hyperbolicity}) detects whether a space looks everywhere like a tree, their average detects whether it looks `mostly' like a tree.
For a bounded measurable similarity function $s \colon S \times S \to [0,b]$ on a probability space $(S,\cF,\mu_S)$, they define
\begin{align*}
	\mathrm{Hyp}(S,\cF,\mu_S,s)
	&\coloneq \bbE\Bigl[\bigl(\min\{s(X_1,X_3),s(X_2,X_3)\}-s(X_1,X_2)\bigr)_+\Bigr],
\end{align*}
where $X_1,X_2,X_3 \sim \mu_S$ are i.i.d.\ and $(a)_+ \coloneq \max\{a,0\}$.
Thus $\mathrm{Hyp}$ is the average amount by which the inequalities $s(X_1,X_2) \geq \min\{s(X_1,X_3),s(X_2,X_3)\}$ fail.
For an mm-space $(X,d_X,\mu_X)$ and a fixed base point $w\in X$, take $s$ to be the Gromov product $s_w(x,x')\coloneq\gp{x}{x'}{w}=\frac{1}{2}(d_X(x,w)+d_X(x',w)-d_X(x,x'))$.
By symmetry, one obtains
\begin{align*}
	\mathrm{Hyp}(X,\mathcal B(X),\mu_X,s_w)
	&= \frac{1}{3}\,\bbE\bigl[\hyp(X_1,X_2,X_3,w)\bigr].
\end{align*}
The factor of $1/3$ arises because the average is over all three orderings of the Gromov products $\gp{x_1}{x_2}{w}$, $\gp{x_1}{x_3}{w}$, $\gp{x_2}{x_3}{w}$, and taking the positive part `$(\ldots)_+$' sets two of them to zero while the one non-zero case is exactly $\hyp(x_1,x_2,x_3,w)$.
Averaging over the base point as well gives $\bbE_{w\sim\mu_X}\bigl[\mathrm{Hyp}(X,\mathcal B(X),\mu_X,s_w)\bigr]=\tfrac{1}{3}\int_{\R}t\,d\nu_{\hyp,X}(t)$.
Chatterjee and Sloman show that $\mathrm{Hyp}$ carries geometrically meaningful information: just as vanishing Gromov hyperbolicity gives an isometric embedding into a tree, small average hyperbolicity guarantees an approximate one \cite[Thm.~2.4]{ChatterjeeSloman2021}.

\paragraph{Persistence sets and measures.}
Blumberg et al.\ \cite[Def.~1.1]{Blumberg2014} study the distribution $\Phi_k^n(X,d_X,\mu_X)$ of the degree-$k$ Vietoris--Rips persistence diagram over i.i.d.\ $n$-point samples of an mm-space.
They prove that these distributions are robust to adding a small proportion of arbitrary points, including outliers, because such points rarely appear in an $n$-point sample \cite[Thm.~5.5]{Blumberg2014}.
In contrast, even a few such points can change the persistence diagram of the entire space arbitrarily \cite[Props.~4.6 and~4.7]{Blumberg2014}.
The robustness of $\Phi_k^n$ makes it possible to base statistical procedures for persistent homology, such as hypothesis tests and confidence intervals, on repeated finite subsamples.
In a closely related framework, the second and third authors study the set-valued $(n,k)$-persistence set $\Dvr{n,k}(X)$ and the $(n,k)$-persistence measure $\Uvr{n,k}(X)$ \cite{GomezMemoli2024}.
Persistence sets can recover geometric information such as the curvature of a surface \cite[Thm.~5.19]{GomezMemoli2024}, while the persistence measures can distinguish spaces with the same persistence set but different frequencies of topological features \cite[Ex.~5.11, Prop.~5.12]{GomezMemoli2024}.
In our notation, the distribution of Blumberg et al.\ and this persistence measure both coincide with the pushforward under our $(n,k)$-persistence diagram invariant $\dgmVR_{n,k}$ (\Cref{inv:persistence-set}):
\begin{align*}
	\Phi_k^n(X,d_X,\mu_X)
	= \Uvr{n,k}(X)
	= \nu_{\dgmVR_{n,k},X}.
\end{align*}
Note that the distance distribution is already contained in this framework: after identifying a degree-zero diagram of the form $\{(0,r)\}$ with $r$, the measure $\Uvr{2,0}(X)$ agrees with $\nu_{\dist,X}$ \cite[Ex.~3.16]{GomezMemoli2024}.

\paragraph{Geometric distributions in phylogenetics.}
Several classical summary statistics in phylogenetics and population genetics are distributions of $n$-invariants of aligned sequences with the Hamming distance, or are closely related to them.
For a set of sequences, the \emph{mismatch distribution} \cite{SlatkinHudson1991} is the distance distribution $\nu_{\dist,X}$, whose shape carries information about demographic history \cite{RogersHarpending1992}; the \emph{nucleotide diversity} \cite{NeiLi1979} corresponds to the mean of $\nu_{\dist,X}$ divided by the sequence length.
Likewise, several measures of tree-likeness are closely related to the hyperbolicity distribution $\nu_{\hyp,X}$ (\Cref{inv:hyperbolicity}).
The \emph{$\delta$ plot} \cite{HollandEtAl2002} is the histogram, over all four-element subsets, of the $\delta$-score, a $4$-invariant obtained by normalizing the hyperbolicity deficit, and the \emph{Q-residuals} \cite{GrayBryantGreenhill2010} average $4\hyp(\mathbf{x})^2$ over quadruples after a rescaling of the distances.
An example of a distribution over quadruples that is not the distribution of an $n$-invariant is \emph{likelihood mapping} \cite{StrimmerVonHaeseler1997}, which explicitly depends on the site patterns among four sequences, not only on their distances.

\paragraph{Persistent homology and reticulate evolution.}
Reticulate evolution leaves cyclic signatures in sequence data.
The four-gamete test detects when two binary characters are incompatible with a tree on which each site mutates only once \cite{HudsonKaplan1985}.
Persistent homology of Vietoris--Rips complexes of viral sequences provides a related multiscale approach to detecting recombination and reassortment \cite{ChanCarlssonRabadan2013}.
Lesnick et al.\ develop mathematical foundations for this approach using galled-tree histories and extend their conclusions to subsamples by stability \cite{LesnickRabadanRosenbloom2020}.
In earlier joint work, the first, fourth, and fifth authors used short-lived topological cycles formed by SNVs to detect recurrent mutations in SARS\nobreakdash-CoV\nobreakdash-2 without reconstructing a phylogeny, and showed that such recurrences correlate with adaptation \cite{BleherEtAl2021}.

\section{Preliminaries}
\label{sec:preliminaries}

In this section we recall the basic definitions and preliminary results required in the rest of the paper: curvature sets and curvature measures (\Cref{subsec:prelim-curvature}), the Gromov--Hausdorff and Wasserstein distances together with empirical measures and sample sizes (\Cref{subsec:prelim-distances}), and properties of measures such as moments, enclosing radii, and intrinsic dimensions (\Cref{subsec:prelim-moments}).
The material is standard and we refer to the indicated references for further details.

Throughout, we use boldface letters $\mathbf{x}$ to represent tuples $(x_1, \dots, x_n)\in X^n$.
For a metric space $(X, d_X)$ and $p \geq 1$, the $\ell^p$ product metric on $X^n$ is defined by
\begin{equation*}
	d_{X,p}(\mathbf{x}, \mathbf{x}') \coloneq \Bigl( \sum_{i=1}^n d_X(x_i, x_i')^p \Bigr)^{1/p}.
\end{equation*}
For example, the $\ell^p$ product metric of $(\R, | \cdot |)$ is the metric induced by the $\ell^p$ norm $\| \cdot \|_p$.
Also recall that the diameter of $(X^n, d_{X,p})$ is $\diam_p(X^n) \coloneq n^{1/p} \diam(X)$.

\subsection{Curvature sets and curvature measures}
\label{subsec:prelim-curvature}

The defining property of a metric $n$-point invariant is that it depends on a tuple $\mathbf{x} \coloneq (x_1, \dots, x_n) \in X^n$ only through the pairwise distances $d_X(x_i, x_j)$.
This means it is a function of the distance matrix of $\mathbf{x}$ alone, such that the domain of an $n$-point invariant is the following space.

\begin{defn}[Metric cone]
	\label{def:metric_cone}
	The \emph{$n$-th metric cone} $\Kn_n$ is the set of matrices $M \in \R^{n \times n}$ such that
	\begin{itemize}
		\item $M_{ij} \geq 0$ and $M_{ii} = 0$,
		\item $M_{ij} = M_{ji}$, and
		\item $M_{ij} \leq M_{ik} + M_{kj}$.
	\end{itemize}
	In other words, $M \in \Kn_n$ if and only if $M$ is the distance matrix of some pseudometric space with $n$ points.
\end{defn}

\begin{remark}
The same space is called the \emph{semimetric cone} and denoted $\mathrm{MET}_n$ in polyhedral combinatorics \cite[Ch.~3]{DezaLaurent1997}.
An infinite-dimensional version, the cone of infinite distance matrices, is used by Vershik as a model for the collection of all Polish spaces \cite{Vershik2004}.
\end{remark}
\begin{remark}
	\label{rmk:Kn_smaller_embedding}
	Since distance matrices are symmetric and have $0$ diagonal, $\Kn_n$ admits an embedding into the smaller Euclidean space $\R^{\binom{n}{2}}$. We will specify the embedding dimension when the result depends on it.
\end{remark}

\begin{defn}[Curvature set {\cite[Sec.~$1.19_+$]{Gromov2007}}]
	Given a metric space $(X, d_X)$ and an integer $n \geq 2$, the \emph{$n$-th curvature set of $X$} is
	\begin{equation*}
		\Kn_n(X) \coloneq \Big\{M \in \R^{n \times n} \mid \exists \ \mathbf{x} \in X^n
		\text{ such that } M_{ij} = d_X(x_i,x_j) \Big\}.
	\end{equation*}
	$\Kn_n(X)$ comes with a natural map $\Psi_n \colon X^n \to \Kn_n(X)$ defined by $\left(\Psi_n(\mathbf{x})\right)_{ij} \coloneq d_X(x_i,x_j)$.
	We equip $\Kn_n(X)$ with the topology generated by the $\ell^p$ metric on $\R^{n \times n}$, for some $p \geq 1$.
	Note that $\Kn_n(X) \subset \Kn_n$.
\end{defn}

\begin{remark}
	\label{rmk:Kn_quotient}
	When $X$ is compact, $\Psi_n$ is a quotient map.
	Indeed, $\Psi_n$ is surjective by definition, continuous because the metric $d_X \colon X \times X \to \R$ is continuous, and closed because it maps the compact set $X^n$ to a Hausdorff space.
\end{remark}

\begin{defn}[Metric measure space]
	A \emph{metric measure space} (\emph{mm-space}), is a triple $(X,d_X, \mu_X)$ where $(X,d_X)$ is a Polish space, i.e.\ a complete separable metric space, and $\mu_X$ is a Borel probability measure on $X$, not necessarily fully supported.
\end{defn}

\begin{defn}[Curvature measure {\cite[Sec.~$3\tfrac{1}{2}{}_+$]{Gromov2007}}]
	Given an mm-space $(X,d_X, \mu_X)$, the \emph{$n$-th curvature measure} $\mu_{n,X}$, or $\mu_n$ when $X$ is clear from context, is defined as the pushforward of the product measure $\mu_X^{\otimes n}$ under the map $\Psi_n \colon X^n \to \Kn_n(X)$.
\end{defn}

\begin{remark}
	\label{rmk:mu_n_smaller_embedding}
	Similar to $\Kn_n$, $\Kn_n(X)$ can be embedded in $\R^{d}$ with either $d \coloneq n^2$ or $d \coloneq \binom{n}{2}$. Accordingly, we view the curvature measure $\mu_n$ as a measure on the corresponding $\R^d$ with support $\Kn_n(X) \subset \R^d$.
\end{remark}

\subsection{Distances and empirical measures}
\label{subsec:prelim-distances}

We recall the Gromov--Hausdorff distance, a standard metric on the space of compact metric spaces \cite{BuragoBuragoIvanov2001}, as it appears in stability results for Gromov hyperbolicity and persistent homology.
\begin{defn}[Gromov--Hausdorff distance, e.g.\ {\cite[Ch.~7]{BuragoBuragoIvanov2001}}]
	Let $(X, d_X)$ and $(Y, d_Y)$ be two metric spaces. Denote by $\pi_X:X\times Y \to X$ and $\pi_Y:X\times Y \to Y$ the canonical projections.
	A \emph{correspondence} is a relation $R \subset X \times Y$ such that $\pi_X(R) = X$ and $\pi_Y(R) = Y$, and its \emph{distortion} is
	\begin{equation*}
		\dis(R) \coloneq \sup_{(x,y), (x',y') \in R} |d_X(x,x') - d_Y(y,y')|.
	\end{equation*}
	The \emph{Gromov--Hausdorff} distance is defined as
	\begin{equation*}
		\GH(X, Y) \coloneq \frac{1}{2} \inf_R \dis(R),
	\end{equation*}
	where the infimum runs over all correspondences between $X$ and $Y$.
\end{defn}
The following comparison, which appeared previously as a step in the proof of \cite[Thm.~3.13]{GomezMemoli2024}, lets us translate the aforementioned stability results for Gromov hyperbolicity and persistent homology into a Lipschitz property of the corresponding $n$-invariant on $\Kn_n$.
\begin{lemma}
	\label{lemma:GH_vs_Kn}
	Let $(X, d_X)$ and $(Y, d_Y)$ be metric spaces with $n$ points. Choose any labelling $x_1, \dots, x_n$ of $X$ and $y_1, \dots, y_n$ of $Y$, and define $M_X \coloneq \Psi_n((x_1, \dots, x_n))$ and $M_Y \coloneq \Psi_n((y_1, \dots, y_n))$. Then
	\begin{equation*}
		\GH(X, Y) \leq \frac{1}{2}\|M_X - M_Y\|_\infty.
	\end{equation*}
\end{lemma}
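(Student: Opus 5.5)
The plan is to exhibit one explicit correspondence and compute its distortion. Since the Gromov--Hausdorff distance is half the infimum of distortions over all correspondences, any single correspondence $R$ gives $\GH(X,Y)\le\frac12\dis(R)$. The labellings suggest the choice
\begin{equation*}
	R \coloneq \{(x_i,y_i) \setcolon i=1,\dots,n\} \subset X\times Y.
\end{equation*}

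First I check that $R$ is a correspondence. Because $x_1,\dots,x_n$ is a labelling of all of $X$, every point of $X$ equals some $x_i$, so $\pi_X(R)=X$. The same argument with $y_1,\dots,y_n$ gives $\pi_Y(R)=Y$.

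Next I compute the distortion. Any two pairs in $R$ have the form $(x_i,y_i)$ and $(x_j,y_j)$ for some indices $i,j$, with $i=j$ allowed. By definition of $\Psi_n$,
\begin{equation*}
	|d_X(x_i,x_j)-d_Y(y_i,y_j)| = |(M_X)_{ij}-(M_Y)_{ij}|.
\end{equation*}
Taking the supremum over all $i,j$ gives $\dis(R)=\max_{i,j}|(M_X-M_Y)_{ij}|=\|M_X-M_Y\|_\infty$, where $\|\cdot\|_\infty$ is the entrywise max norm on $\R^{n\times n}$. Combining this with the first paragraph yields the claimed inequality.

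No step is a genuine obstacle. The only points needing care are that the labellings are surjective, so $R$ really is a correspondence, and that $\|\cdot\|_\infty$ means the entrywise maximum. If the labellings are not required to be injective, for instance if $X$ or $Y$ is a pseudometric space, the argument is unchanged, since only surjectivity was used.
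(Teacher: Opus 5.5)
Your proposal is correct and matches the paper's proof: both take the diagonal correspondence $\{(x_i,y_i) \setcolon i=1,\dots,n\}$ and identify its distortion with the entrywise maximum $\|M_X-M_Y\|_\infty$. Your explicit check that this relation is a correspondence, using that the labellings are surjective, is a detail the paper leaves implicit.
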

\begin{proof}
	Let $R_\Delta$ be the diagonal correspondence $\{(x_1, y_1), \dots, (x_n, y_n)\}$. Then
	\begin{equation*}
		\GH(X, Y) \leq \frac{1}{2} \dis(R_\Delta) = \frac{1}{2} \max_{i,j=1, \dots, n} |d_X(x_i,x_j) - d_Y(y_i,y_j)| = \frac{1}{2} \|M_X - M_Y\|_\infty.
	\end{equation*}
\end{proof}

Our main results are formulated in terms of the Wasserstein distance, a standard metric on the space of probability measures that is widely used in optimal transport theory \cite{Villani2009}.
\begin{defn}[Wasserstein distance, e.g.\ {\cite[Ch.~6]{Villani2009}}]
	Fix $p \geq 1$ and a Polish space $(Z, d_Z)$. Let $\alpha$ and $\beta$ be Borel probability measures on $Z$ with finite $p$-th moments. Let $\Pi(\alpha,\beta)$ denote the set of couplings of $\alpha$ and $\beta$, that is, the Borel probability measures on $Z \times Z$ with marginals $\alpha$ and $\beta$.
	The \emph{$p$-Wasserstein distance} between $\alpha$ and $\beta$ is defined by
	\begin{align*}
		\W_p(\alpha,\beta)
		\coloneq
		\left(
		\inf_{\pi \in \Pi(\alpha,\beta)}
		\int_{Z \times Z} d_Z(z,z')^p \, d\pi(z,z')
		\right)^{1/p}.
	\end{align*}
\end{defn}

We will later also use the following standard result about the Wasserstein distance under Lipschitz maps, cf.\ \cite[Ch.~6]{Villani2009}, to pass from bounds on the curvature measure to bounds on the distribution of an $n$-invariant; we include a short proof for completeness.
\begin{lemma}
	\label{lemma:wasserstein_lipschitz}
	Let $(X, d_X)$ and $(Y, d_Y)$ be Polish metric spaces, and let $F \colon X \to Y$ be an $L$-Lipschitz function. For any two Borel probability measures $\alpha, \beta$ on $X$ and any $p \geq 1$,
	\begin{equation*}
		\W_p(F_\#\alpha, F_\#\beta) \leq L \cdot \W_p(\alpha, \beta).
	\end{equation*}
\end{lemma}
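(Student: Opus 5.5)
The plan is to push an optimal coupling of $\alpha$ and $\beta$ forward along $F\times F$. This gives a coupling of $F_\#\alpha$ and $F_\#\beta$ whose cost is controlled by the Lipschitz bound. We may assume $\W_p(\alpha,\beta)<\infty$, since otherwise there is nothing to prove.

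First, fix an arbitrary coupling $\pi\in\Pi(\alpha,\beta)$, not necessarily optimal, so that no existence result for optimal plans is needed. Consider the map $F\times F\colon X\times X\to Y\times Y$, $(x,x')\mapsto(F(x),F(x'))$. It is continuous, hence Borel, because $F$ is Lipschitz. Set $\tilde\pi\coloneq(F\times F)_\#\pi$. Projecting to the first factor gives $\pi_Y\circ(F\times F)=F\circ\pi_X$, and similarly for the second factor. Therefore the marginals of $\tilde\pi$ are $F_\#\alpha$ and $F_\#\beta$, so $\tilde\pi\in\Pi(F_\#\alpha,F_\#\beta)$.

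Next, use the change of variables formula for pushforwards together with the Lipschitz bound $d_Y(F(x),F(x'))\leq L\,d_X(x,x')$:
\begin{align*}
	\int_{Y\times Y} d_Y(y,y')^p\,d\tilde\pi(y,y')
	= \int_{X\times X} d_Y(F(x),F(x'))^p\,d\pi(x,x')
	\leq L^p\int_{X\times X} d_X(x,x')^p\,d\pi(x,x').
\end{align*}
The left-hand side is at least $\W_p(F_\#\alpha,F_\#\beta)^p$. Taking the infimum over $\pi\in\Pi(\alpha,\beta)$ on the right and then taking $p$-th roots yields the claim.

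The only point needing care is that both sides are defined. If $\alpha$ and $\beta$ have finite $p$-th moments, then so do $F_\#\alpha$ and $F_\#\beta$. To see this, fix a base point $x_0$ and use $d_Y(F(x),F(x_0))\leq L\,d_X(x,x_0)$. None of these steps is a real obstacle; the argument is routine.
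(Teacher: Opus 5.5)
Your proposal is correct and follows essentially the same route as the paper: push an arbitrary coupling $\pi\in\Pi(\alpha,\beta)$ forward along $F\times F$, note that this is a coupling of $F_\#\alpha$ and $F_\#\beta$, bound its cost by $L^p$ times the cost of $\pi$, and infimize over $\pi$. Your extra remarks on the measurability of $F\times F$ and on finiteness of moments are harmless details that the paper leaves implicit.
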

\begin{proof}
	Let $\pi \in \Pi(\alpha, \beta)$. Since the pushforward $\pi_{F} \coloneq (F \times F)_\# \pi$ is a coupling of $F_\#\alpha$ and $F_\#\beta$, we obtain
	\begin{align*}
		\W_p(F_\#\alpha, F_\#\beta)^p
		&\leq \int_{Y \times Y} d_Y(y, y')^p \, d\pi_{F}(y,y') \\
		&= \int_{X \times X} d_Y(F(x), F(x'))^p \, d\pi(x,x')
		\leq L^p \int_{X \times X} d_X(x, x')^p \, d\pi(x,x').
	\end{align*}
	Taking $p$-th root and infimizing over all $\pi \in \Pi(\alpha, \beta)$ yields the result.
\end{proof}

Throughout, given i.i.d.\ samples $y_1, \dots, y_N \sim \mu$ from a Borel probability measure $\mu$ on any Polish space, we write 
\begin{align}
	\hat{\mu}_N \coloneq \frac{1}{N} \sum_{i=1}^N \delta_{y_i}
\end{align}
for their \emph{empirical measure}.
For example, for the curvature measure $\mu_n$ of an mm-space $(X, d_X, \mu_X)$, we sample $N$ tuples $\mathbf{x}^{(1)}, \dots, \mathbf{x}^{(N)} \sim \mu_X^{\otimes n}$ and their distance matrices $\Psi_n(\mathbf{x}^{(i)})$ are i.i.d.\ samples from $\mu_n$, with empirical measure
\begin{equation*}
	\empirical{\mu}{n}{N} \coloneq \frac{1}{N} \sum_{i=1}^N \delta_{\Psi_n(\mathbf{x}^{(i)})}.
\end{equation*}

\Cref{thm:main-sampling,thm:main-sampling-intrinsic} provide bounds of the form $\bbE\!\left[\W_p(\hat{\mu}_N, \mu)^p\right] \leq \beta(N)$, where $\beta(N)$ is a non-increasing non-negative function of $N$.
Such bounds also determine how many samples are needed to reach a given error with a given confidence.
We use the strategy of \cite[Sec.~2.3]{WeedBach2019}: choose $N$ large enough that the expectation of $\W_p^p$ is small, then use the concentration of $\W_p^p$ around its expectation, which follows from McDiarmid's bounded differences inequality \cite[Prop.~20]{WeedBach2019}.
The following lemma records this argument in a form suitable for our purposes.
\begin{lemma}
	\label{lemma:empirical_sample_size}
	Let $1 \leq p < \infty$, and let $\mu$ be a Borel probability measure on a Polish space $(Z, d_Z)$ with compact support and $\diam(\supp \mu) \leq D$ for some $D > 0$.
	Suppose $\bbE[\W_p(\hat{\mu}_N, \mu)^p] \leq \beta(N)$ for some non-increasing function $\beta \colon (0, \infty) \to [0, \infty)$.
	Choose an error margin $\epsilon > 0$ and a confidence level $0 < \alpha < 1$, and let $N_0 > 0$ satisfy
	\begin{equation*}
		\frac{\beta(N_0)}{D^p} + \sqrt{-\tfrac{1}{2} \ln(1-\alpha)} N_0^{-1/2} \leq \epsilon^p.
	\end{equation*}
	Then for all $N > N_0$,
	\begin{align}
		\W_p(\hat{\mu}_N, \mu) < \epsilon D	
	\end{align}
	with probability at least $\alpha$.
\end{lemma}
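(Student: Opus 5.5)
The plan is to split the deviation into its mean and its fluctuation, and to control the fluctuation with McDiarmid's bounded differences inequality applied to $\W_p(\hat{\mu}_N,\mu)^p$. Write $S \coloneq \supp\mu$ and let $K$ be the compact set with $\diam(K)\le D$ containing $S$. Consider the function
\begin{align*}
	f(y_1,\dots,y_N) \coloneq \W_p\Bigl(\tfrac{1}{N}\textstyle\sum_{i} \delta_{y_i},\mu\Bigr)^p, \qquad y_i\in S.
\end{align*}
Almost surely all samples lie in $S$, so $f(y_1,\dots,y_N)=\W_p(\hat{\mu}_N,\mu)^p$ and $\bbE f\le\beta(N)$ by hypothesis.

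The first key step is the bounded differences property. Suppose $y_i$ is replaced by $y_i'$ and everything else is kept fixed. Take an optimal coupling $\pi$ for the original empirical measure and $\mu$. Modify $\pi$ only on the mass $1/N$ sitting at $y_i$: transport it from $y_i'$ to the same targets instead. The cost changes by at most $\frac1N\sup_{z,z'\in S}d_Z(z,z')^p\le D^p/N$. This needs the targets to lie in $S$, which holds because the second marginal is $\mu$. Hence $|f(\mathbf y)-f(\mathbf y')|\le D^p/N$, and by symmetry this bound holds in both directions. Note that we work with $\W_p^p$ rather than $\W_p$ precisely so that this linear-in-mass argument applies.

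Next, McDiarmid's inequality gives, for $t>0$,
\begin{align*}
	\bbP\bigl(f-\bbE f\ge t\bigr)\le \exp\Bigl(-\frac{2t^2}{N\cdot D^{2p}/N^2}\Bigr)=\exp\Bigl(-\frac{2Nt^2}{D^{2p}}\Bigr).
\end{align*}
Choose $t\coloneq D^p\sqrt{-\tfrac12\ln(1-\alpha)}\,N^{-1/2}$. Then the right-hand side equals $1-\alpha$. So with probability at least $\alpha$,
\begin{align*}
	\W_p(\hat{\mu}_N,\mu)^p \le \beta(N)+D^p\sqrt{-\tfrac12\ln(1-\alpha)}\,N^{-1/2}.
\end{align*}

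Finally, for $N>N_0$, monotonicity of $\beta$ and of $N^{-1/2}$ bounds the right-hand side by
\begin{align*}
	D^p\Bigl(\frac{\beta(N_0)}{D^p}+\sqrt{-\tfrac12\ln(1-\alpha)}\,N_0^{-1/2}\Bigr)\le \epsilon^pD^p.
\end{align*}
Taking $p$-th roots gives $\W_p(\hat{\mu}_N,\mu)\le\epsilon D$.

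The strict inequality needs a little care. If $\alpha$ does not already correspond to zero failure probability, the map $N\mapsto N^{-1/2}$ is strictly decreasing. Then for $N>N_0$ the second term is strictly smaller than its value at $N_0$ whenever $\ln(1-\alpha)\ne0$, which holds since $\alpha>0$. This yields strictness on the event $\{f-\bbE f<t\}$, whose probability is at least $\alpha$ as well.

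The main obstacle is the bounded differences step. It requires checking that the modified coupling is admissible and that only mass $1/N$ moves, and it requires restricting to the support so that the diameter bound $D$ applies.
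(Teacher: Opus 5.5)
Your proof is correct and follows the paper's route: bound the mean of $\W_p^p$ by $\beta(N)\le\beta(N_0)$, then control the fluctuation with McDiarmid's inequality at $t = D^p\sqrt{-\tfrac12\ln(1-\alpha)}\,N^{-1/2}$, which gives failure probability $1-\alpha$. The only difference is that you verify the bounded-differences constant $D^p/N$ yourself by modifying a coupling, whereas the paper rescales the metric by $1/D$ and cites this concentration bound as \cite[Prop.~20]{WeedBach2019}.
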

\begin{proof}
	Write $\W_p \coloneq \W_p(\hat{\mu}_N, \mu)$ for short, and let $K \coloneq \supp \mu$.
	Almost surely, all samples lie in $K$, so $\W_p$ is the Wasserstein distance of two measures on $K$.
	Since the paper \cite{WeedBach2019} works with compact spaces of diameter at most $1$, we rescale the metric on $K$ by $1/D$, which rescales $\W_p$ to $\widetilde{\W}_p \coloneq \W_p/D$.
	By \cite[Prop.~20]{WeedBach2019}, $\bbP[\widetilde{\W}_p^p - \bbE[\widetilde{\W}_p^p] \geq t] \leq \exp\left(-2Nt^2 \right)$ or, equivalently,
	\begin{equation*}
		\bbP[\W_p^p - \bbE[\W_p^p] \geq tD^p] \leq \exp\left(-2Nt^2 \right).
	\end{equation*}
	Let $t \coloneq \sqrt{-\frac{1}{2} \ln(1-\alpha)} N^{-1/2}$, so that $\exp(-2Nt^2) = 1 - \alpha$.
	Since $\beta$ is non-increasing and $N > N_0$, with probability at least $\alpha$ we have
	\begin{equation*}
		\begin{multlined}[.7\textwidth]
			\W_p^p
			< \bbE[\W_p^p] + tD^p
			\leq \beta(N) + D^p \sqrt{-\tfrac{1}{2} \ln(1-\alpha)} N^{-1/2} \\
			< \left( \frac{\beta(N_0)}{D^p} + \sqrt{-\tfrac{1}{2} \ln(1-\alpha)} N_0^{-1/2} \right) D^p
			\leq \epsilon^p D^p.
		\end{multlined}
	\end{equation*}
\end{proof}

\subsection{Moments and dimensions of measures}
\label{subsec:prelim-moments}

We now collect the remaining ingredients required for the proofs of \Cref{thm:main-sampling,thm:main-sampling-intrinsic}.
Both bound $\bbE\!\left[\W_p(\hat{\mu}_N, \mu)^p\right]$ by functions of the form $\beta(N) = \sum_i C_i N^{-e_i}$, where the smallest $e_i$ is determined by the dimension of the set in which the convergence is described.
\Cref{thm:main-sampling} applies the bounds of \cite{convergence_empirical_fournier, Fournier2023} for arbitrary measures in Euclidean space $\R^d$ to $\mu_n$, so the relevant dimension is the ambient $d = \binom{n}{2}$.
In this case there is only a single term and the constant depends on the moments of $\mu_n$.
In contrast, \Cref{thm:main-sampling-intrinsic} relies on convergence within sets of full measure \cite{convergence_empirical_covering_dim_2014, convergence_empirical_covering_dim_2026} or of almost full measure \cite{WeedBach2019}, whose dimensions can be smaller than the ambient dimension and may thus yield a faster convergence rate.
We begin by recalling some basic facts about the moments of a measure and then turn to the relevant notions of dimension.

\begin{defn}[Moments of a measure]
	Let $X \subset \R^d$ and let $\mu_X$ be a Borel probability measure supported on $X$. Fix $1 \leq q < \infty$, $1 \leq m \leq \infty$, and $\mathbf{p}_0 \in \R^d$. The \emph{$q$-th moment of $\mu_X$ about $\mathbf{p}_0$} is
	\begin{equation*}
		\cM_q^{(m)}(\mu_X, \mathbf{p}_0) \coloneq \int_{X} \|\mathbf{x} - \mathbf{p}_0\|_m^q \ \mu_X(d\mathbf{x}).
	\end{equation*}
	We define the \emph{$q$-th translation invariant moment} as $\cM_q^{(m)}(\mu_X) \coloneq \inf_{\mathbf{p}_0 \in \R^d} \cM_q^{(m)}(\mu_X, \mathbf{p}_0)$.\\
	We also define
	\begin{align*}
		\cM_\infty^{(m)}(\mu_X, \mathbf{p}_0) &\coloneq \sup_{q \geq 1} \cM_q^{(m)}(\mu_X, \mathbf{p}_0)^{1/q} = \sup_{\mathbf{x} \in X} \|\mathbf{x} - \mathbf{p}_0\|_m\\
		\cM_\infty^{(m)}(\mu_X) &\coloneq \sup_{q \geq 1} \cM_q^{(m)}(\mu_X)^{1/q}.
	\end{align*}
\end{defn}

The translation invariant moments are defined by an infimum over the base point $\mathbf{p}_0$ and we will need to know how the moments may vary with $\mathbf{p}_0$, which is the content of the following lemma.
\begin{lemma}
	\label{lemma:moments_lipschitz}
	The functions $\mathbf{p}_0 \mapsto \cM_q^{(m)}(\mu_X, \mathbf{p}_0)^{1/q}$ with $1 \leq q < \infty$ and $\mathbf{p}_0 \mapsto \cM_\infty^{(m)}(\mu_X, \mathbf{p}_0)$ are \mbox{$1$-Lipschitz}.
\end{lemma}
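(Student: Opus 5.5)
The plan is to treat $\cM_q^{(m)}(\mu_X,\mathbf{p}_0)^{1/q}$ as an $L^q(\mu_X)$-norm of a function of $\mathbf{x}$ and then use the triangle inequality twice: once in $\R^d$ and once in $L^q(\mu_X)$ (Minkowski's inequality). Fix $1\le q<\infty$ and two base points $\mathbf{p}_0,\mathbf{p}_1\in\R^d$. Set
\begin{equation*}
	f_j(\mathbf{x})\coloneq\|\mathbf{x}-\mathbf{p}_j\|_m, \qquad j=0,1,
\end{equation*}
so that $\cM_q^{(m)}(\mu_X,\mathbf{p}_j)^{1/q}=\|f_j\|_{L^q(\mu_X)}$.

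\textbf{Step 1 (pointwise bound).} By the triangle inequality for $\|\cdot\|_m$, for every $\mathbf{x}$,
\begin{equation*}
	|f_0(\mathbf{x})-f_1(\mathbf{x})|\le\|\mathbf{p}_0-\mathbf{p}_1\|_m .
\end{equation*}

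\textbf{Step 2 (finite $q$).} Minkowski's inequality in $L^q(\mu_X)$ gives
\begin{equation*}
	\bigl|\|f_0\|_{L^q}-\|f_1\|_{L^q}\bigr|\le\|f_0-f_1\|_{L^q}.
\end{equation*}
The right-hand side is at most $\|\mathbf{p}_0-\mathbf{p}_1\|_m$, because the integrand is bounded by this constant and $\mu_X$ is a probability measure. There is one technicality: the moments might be infinite. If one of them is infinite, Step 1 shows that $f_0\le f_1+\|\mathbf{p}_0-\mathbf{p}_1\|_m$ and vice versa, so the other is infinite as well. The function is then identically $+\infty$, and I would note that the statement is vacuous, or implicitly restricted to measures with finite moments, in that case.

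\textbf{Step 3 ($q=\infty$).} For $\cM_\infty^{(m)}(\mu_X,\mathbf{p}_0)=\sup_{\mathbf{x}\in X}f_0(\mathbf{x})$, take the supremum over $\mathbf{x}$ in $f_0(\mathbf{x})\le f_1(\mathbf{x})+\|\mathbf{p}_0-\mathbf{p}_1\|_m$. This gives
\begin{equation*}
	\cM_\infty^{(m)}(\mu_X,\mathbf{p}_0)\le\cM_\infty^{(m)}(\mu_X,\mathbf{p}_1)+\|\mathbf{p}_0-\mathbf{p}_1\|_m ,
\end{equation*}
and swapping the roles of $\mathbf{p}_0$ and $\mathbf{p}_1$ gives the reverse inequality. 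The stated equality of the supremum over $q$ with the supremum norm is part of the definition in the paper. If that equality were needed instead, it would follow from Step 2 by taking suprema over $q$, since a supremum of $1$-Lipschitz functions is $1$-Lipschitz.

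Everything is routine. The only point requiring care is the infinite-moment case, and the fact that the Lipschitz constant is taken with respect to the same norm $\|\cdot\|_m$ that appears in the definition of the moments.
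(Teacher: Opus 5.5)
Your proposal is correct and takes essentially the same route as the paper. Both combine the pointwise triangle inequality for $\|\cdot\|_m$ with Minkowski's inequality in $L^q(\mu_X)$ and then swap the base points; your reverse-triangle formulation simply records both directions at once, and the $q=\infty$ case is handled the same way by taking suprema. Your treatment of the infinite-moment case is a small extra piece of care that the paper leaves implicit.
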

\begin{proof}
	By Minkowski's inequality,
	\begin{align*}
		\cM_q^{(m)}(\mu_X, \mathbf{p}_0)^{1/q}
		&\leq \left(\int_{X} ( \|\mathbf{x} - \mathbf{p}_0'\|_m + \|\mathbf{p}_0' - \mathbf{p}_0\|_m)^q \ \mu_X(d\mathbf{x}) \right)^{1/q} \\
		&\leq \left(\int_{X} \|\mathbf{x} - \mathbf{p}_0'\|_m^q \ \mu_X(d\mathbf{x}) \right)^{1/q} + \left(\int_{X} \|\mathbf{p}_0' - \mathbf{p}_0\|_m^q \ \mu_X(d\mathbf{x}) \right)^{1/q} \\
		&= \cM_q^{(m)}(\mu_X, \mathbf{p}_0')^{1/q} + \|\mathbf{p}_0' - \mathbf{p}_0\|_m.
	\end{align*}
	Swapping the roles of $\mathbf{p}_0$ and $\mathbf{p}_0'$ yields $|\cM_q^{(m)}(\mu_X, \mathbf{p}_0')^{1/q} - \cM_q^{(m)}(\mu_X, \mathbf{p}_0)^{1/q}| \leq \|\mathbf{p}_0' - \mathbf{p}_0\|_m$, as claimed. The proof for $\cM_\infty^{(m)}(\mu_X, \mathbf{p}_0)$ is analogous.
\end{proof}

The moments of a measure are controlled by the size of its support, more precisely its minimum enclosing radius.
This will allow us to replace the moment factor in the empirical convergence rates of \cite{Fournier2023} by the diameter of $X$, which is how we obtain an explicit constant in \Cref{thm:main-sampling}.
\begin{defn}[Minimum enclosing radius]
	Let $X \subset \R^d$ and fix $1 \leq p \leq \infty$. The \emph{minimum enclosing radius} of $X$ in the $\ell^p$ metric is
	\begin{equation*}
		R_p(X)
		\coloneq \inf \{R > 0 \mid \exists \ \mathbf{p}_0 \in \R^d \text{ such that } \|\mathbf{x} - \mathbf{p}_0\|_p \leq R \text{ for all } \mathbf{x} \in X\}.
	\end{equation*}
	Alternatively, $R_p(X) = \inf_{\mathbf{p}_0 \in \R^d} \sup_{\mathbf{x} \in X} \|\mathbf{x} - \mathbf{p}_0\|_p$.
\end{defn}
\begin{prop}
	\label{prop:moment_radius}
	Let $X \subset \R^d$ and let $\mu_X$ be a Borel probability measure supported on $X$. Fix $1 \leq m \leq \infty$. Then $\displaystyle \cM_\infty^{(m)}(\mu_X) = \sup_{q \geq 1} \cM_{q}^{(m)}(\mu_X)^{1/q} = R_m(X)$.
\end{prop}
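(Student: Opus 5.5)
The plan is to prove the two inequalities $\sup_{q\geq1}\cM_q^{(m)}(\mu_X)^{1/q}\leq R_m(X)$ and $R_m(X)\leq\sup_{q\geq 1}\cM_q^{(m)}(\mu_X)^{1/q}$ separately. The second is a minimax exchange. I read "supported on $X$" as $\overline{X}=\supp\mu_X$. This is needed for equality, since otherwise points of $X$ outside the support would enlarge $R_m(X)$ without affecting any moment. Enlarging $X$ to its closure changes neither side.

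\textbf{Upper bound.} For every $\mathbf{p}_0$ and every $q$, we have $\cM_q^{(m)}(\mu_X,\mathbf{p}_0)\leq \sup_{\mathbf{x}\in X}\|\mathbf{x}-\mathbf{p}_0\|_m^q$, because $\mu_X$ is a probability measure concentrated on $X$. Taking the infimum over $\mathbf{p}_0$ of the $q$-th roots, and then the supremum over $q$, gives $S\coloneq\sup_q \cM_q^{(m)}(\mu_X)^{1/q}\leq R_m(X)$.

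\textbf{Lower bound.} We may assume $S<\infty$. For each integer $q$, choose $\mathbf{p}_q$ with $\cM_q^{(m)}(\mu_X,\mathbf{p}_q)^{1/q}\leq S+1/q$. By Jensen (or Hölder), $q\mapsto \cM_q^{(m)}(\mu_X,\mathbf{p})^{1/q}$ is non-decreasing, so in particular $\cM_1^{(m)}(\mu_X,\mathbf{p}_q)\leq S+1$. By \Cref{lemma:moments_lipschitz} with $q=1$,
\[
\|\mathbf{p}_q-\mathbf{p}_1\|_m\leq \cM_1^{(m)}(\mu_X,\mathbf{p}_q)+\cM_1^{(m)}(\mu_X,\mathbf{p}_1)\leq 2(S+1),
\]
so the sequence $(\mathbf{p}_q)$ is bounded. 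Pass to a subsequence converging to some $\mathbf{p}^\ast\in\R^d$. Fix $q_0$. For $q\geq q_0$ in the subsequence, monotonicity and \Cref{lemma:moments_lipschitz} give
\[
\cM_{q_0}^{(m)}(\mu_X,\mathbf{p}^\ast)^{1/q_0}\leq \cM_{q_0}^{(m)}(\mu_X,\mathbf{p}_q)^{1/q_0}+\|\mathbf{p}_q-\mathbf{p}^\ast\|_m\leq S+1/q+\|\mathbf{p}_q-\mathbf{p}^\ast\|_m .
\]
Letting $q\to\infty$ yields $\cM_{q_0}^{(m)}(\mu_X,\mathbf{p}^\ast)^{1/q_0}\leq S$ for every $q_0$.

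It remains to let $q_0\to\infty$. The $L^{q_0}(\mu_X)$-norms of $f(\mathbf{x})=\|\mathbf{x}-\mathbf{p}^\ast\|_m$ converge to the $\mu_X$-essential supremum of $f$. Since $f$ is continuous, this essential supremum equals $\sup_{\mathbf{x}\in\supp\mu_X}f(\mathbf{x})$: any support point with $f>t$ has a neighbourhood of positive measure on which $f>t$. By the support convention this is $\sup_{\mathbf{x}\in X}\|\mathbf{x}-\mathbf{p}^\ast\|_m$. Hence $R_m(X)\leq \sup_{\mathbf{x}\in X}\|\mathbf{x}-\mathbf{p}^\ast\|_m\leq S$.

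Combining the two bounds gives $S=R_m(X)$. The middle identity in the statement, $\cM_\infty^{(m)}(\mu_X)=S$, is just the definition of $\cM_\infty^{(m)}(\mu_X)$.

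The main obstacle is the exchange of $\sup_q$ and $\inf_{\mathbf{p}_0}$ in the lower bound. It is resolved by the compactness of the near-minimizers $\mathbf{p}_q$ together with the monotonicity in $q$. A secondary point is identifying the essential supremum with the supremum over $X$, which is exactly where the support assumption enters.
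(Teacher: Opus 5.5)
Your proof is correct, and it handles the key exchange of $\sup_q$ and $\inf_{\mathbf{p}_0}$ differently from the paper. The upper bound is the same inf--sup inequality in both.

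For the lower bound, the paper restricts $\mathbf{p}_0$ to a closed ball $\overline{B}$ that it asserts contains $\supp\mu_X$ and the minimizers of every $\cM_q^{(m)}(\mu_X,\cdot)$. It then applies Dini's theorem to the non-decreasing continuous functions $\mathbf{p}_0\mapsto\cM_k^{(m)}(\mu_X,\mathbf{p}_0)^{1/k}$, whose limit $\cM_\infty^{(m)}(\mu_X,\cdot)$ is continuous by \Cref{lemma:moments_lipschitz}. Uniform convergence on $\overline{B}$ lets $\inf_{\overline{B}}$ commute with $k\to\infty$.

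You instead take a limit point $\mathbf{p}^\ast$ of near-minimizers and use monotonicity in $q$ plus the Lipschitz property to show that this single centre has all moments bounded by $S$. After that, only the pointwise limit $q_0\to\infty$ at the fixed centre $\mathbf{p}^\ast$ is needed. What your version buys:
\begin{itemize}
\item it proves the boundedness of near-minimizers, which the paper takes for granted when choosing $\overline{B}$;
\item it does not need exact minimizers to exist;
\item it covers unbounded support automatically, since $S<\infty$ forces $\sup_{\mathbf{x}\in X}\|\mathbf{x}-\mathbf{p}^\ast\|_m\le S$, whereas the paper sets that case aside with a one-line assertion that every term is infinite;
\item it makes explicit the support convention the paper uses tacitly through $\cM_\infty^{(m)}(\mu_X,\mathbf{p}_0)=\sup_{\mathbf{x}\in X}\|\mathbf{x}-\mathbf{p}_0\|_m$ in its definition.
\end{itemize}
The paper's route is shorter once the ball is granted, and it gives uniform convergence on compact sets as a by-product.

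One small correction. The bound $\|\mathbf{p}_q-\mathbf{p}_1\|_m\le\cM_1^{(m)}(\mu_X,\mathbf{p}_q)+\cM_1^{(m)}(\mu_X,\mathbf{p}_1)$ does not follow from \Cref{lemma:moments_lipschitz}, which bounds differences of moments by distances rather than the reverse. It follows by integrating the triangle inequality $\|\mathbf{p}_q-\mathbf{p}_1\|_m\le\|\mathbf{x}-\mathbf{p}_q\|_m+\|\mathbf{x}-\mathbf{p}_1\|_m$ against the probability measure $\mu_X$.
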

\begin{proof}
	Given $\mathbf{p}_0 \in \R^d$, define $f_{m,\mathbf{p}_0} \colon X \to \R$ by $f_{m,\mathbf{p}_0}(\mathbf{x}) \coloneq \|\mathbf{x} - \mathbf{p}_0\|_m$. Since $\mu_X$ is a probability measure and $\cM_q^{(m)}(\mu_X, \mathbf{p}_0)^{1/q} = \|f_{m,\mathbf{p}_0}\|_{L^q(X, \mu_X)}$, the function $q \mapsto \cM_q^{(m)}(\mu_X, \mathbf{p}_0)^{1/q}$ is increasing in $q$ and has limit $\cM_\infty^{(m)}(\mu_X, \mathbf{p}_0) = \left\| f_{m,\mathbf{p}_0} \right\|_{L^\infty(X, \mu_X)}$. Moreover,
	\begin{equation*}
		\sup_{q \geq 1} \cM_{q}^{(m)}(\mu_X)^{1/q}
		\leq \inf_{\mathbf{p}_0 \in \R^d} \sup_{q \geq 1} \cM_q^{(m)}(\mu_X, \mathbf{p}_0)^{1/q}
		= \inf_{\mathbf{p}_0 \in \R^d} \sup_{\mathbf{x} \in X} \|\mathbf{x} - \mathbf{p}_0\|_m
		= R_m(X).
	\end{equation*}
	If $\supp(\mu_X)$ is unbounded, every term above is infinite, so we only prove the reverse inequality when $\supp(\mu_X)$ is bounded.\\
	Choose a closed ball $\overline{B} \subset \R^d$ that is large enough to contain $\supp(\mu_X)$ and every point $\mathbf{p}_0$ that realizes the infimum in $\cM_q^{(m)}(\mu_X)$ for $q \geq 1$. By \Cref{lemma:moments_lipschitz}, the functions $\mathbf{p}_0 \mapsto \cM_q^{(m)}(\mu_X, \mathbf{p}_0)^{1/q}$ and $\mathbf{p}_0 \mapsto \cM_\infty^{(m)}(\mu_X, \mathbf{p}_0)$ are $1$-Lipschitz and, hence, continuous for all $q \geq 1$. After replacing $q \geq 1$ with an integer $k \geq 1$ and restricting $\mathbf{p}_0$ to the compact set $\overline{B}$, we get a sequence $\left( \cM_k^{(m)}(\mu_X, \mathbf{p}_0)^{1/k} \right)_k$ of non-decreasing continuous functions defined on a compact set with a continuous limit, so Dini's theorem implies that $\cM_k^{(m)}(\mu_X, \mathbf{p}_0)^{1/k}$ converges uniformly to $\cM_\infty^{(m)}(\mu_X, \mathbf{p}_0)$. Then for every $\epsilon > 0$, there exists $k_0 > 0$ such that
	\begin{equation*}
		\cM_\infty^{(m)}(\mu_X, \mathbf{p}_0) - \epsilon < \cM_k^{(m)}(\mu_X, \mathbf{p}_0)^{1/k}
	\end{equation*}
	for every $k > k_0$ and $\mathbf{p}_0 \in \overline{B}$. Then
	\begin{equation*}
		R_m(X) - \epsilon
		= \inf_{\mathbf{p}_0 \in \overline{B}} \cM_\infty^{(m)}(\mu_X, \mathbf{p}_0) - \epsilon
		\leq \inf_{\mathbf{p}_0 \in \overline{B}} \cM_k^{(m)}(\mu_X, \mathbf{p}_0)^{1/k}
		= \cM_k^{(m)}(\mu_X)^{1/k}
		\leq \cM_\infty^{(m)}(\mu_X).
	\end{equation*}
	Letting $\epsilon \to 0$ yields $R_m(X) \leq \cM_\infty^{(m)}(\mu_X)$ and, with that, the result.
\end{proof}

As mentioned above, the estimates of \cite{convergence_empirical_covering_dim_2014, WeedBach2019, convergence_empirical_covering_dim_2026} may replace the dimension of the ambient Euclidean space by an intrinsic dimension of $\supp(\mu)$, which can be considerably smaller.
These sources do not all use the same notion of dimension.
The ones we need are the standard Hausdorff and Minkowski dimensions $d_H(X)$ and $d_M(X)$ of a metric space, and the Wasserstein dimension $d_p^*(\mu_X)$ of \cite[Def.~4]{WeedBach2019}, which depends on a parameter $1 \leq p < \infty$.
\cite{WeedBach2019} also use the Minkowski dimension $d_M(\mu)$ of a measure \cite[Def.~5]{WeedBach2019}, but on a Polish space it equals $d_M(\supp \mu)$ because the Minkowski dimension is monotone and invariant under closures \cite[Lem.~3.3(i),(ii)]{Dimensions-Embeddings-Attractors}.
Rather than reproducing the precise definitions, we impose a standard regularity condition on our mm-spaces, under which all three agree with a single parameter~$s$.

\begin{defn}[Ahlfors regularity, e.g.\ {\cite[Ch.~8]{heinonen}}]
	\label{def:ahlfors-regular}
	Let $(X, d_X)$ be a metric space and $\mu_X$ a Borel measure on $X$.
	The triple $(X, d_X, \mu_X)$ is said to be \emph{Ahlfors $s$-regular} if there exists a constant $C_X$ such that
	\begin{equation}
		\label{eq:ahlfors-regular}
		\frac{1}{C_X} r^{s} \leq \mu_X(B_X(x_0, r)) \leq C_X r^{s}
	\end{equation}
	for every $x_0 \in X$ and $0 < r < \diam(X)$.
\end{defn}
\begin{lemma}
	\label{lemma:ahlfors-compact}
	Let $(X, d_X, \mu_X)$ be an Ahlfors $s$-regular mm-space.
	Then $X$ is compact and $\mu_X$ is fully supported.
\end{lemma}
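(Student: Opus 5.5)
Both claims will follow from the two-sided bound \eqref{eq:ahlfors-regular}: the lower bound gives full support and total boundedness, and completeness of the Polish space then gives compactness. Throughout I use that $0<\diam(X)<\infty$, implicit in the definition since the radii range over $0<r<\diam(X)$. For a singleton $X$ both claims are trivial, so I assume $\diam(X)>0$.

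\emph{Full support.} Let $x_0\in X$ and let $U$ be an open neighbourhood of $x_0$. I choose $r$ with $0<r<\diam(X)$ small enough that $B_X(x_0,r)\subset U$. Then $\mu_X(U)\geq \mu_X(B_X(x_0,r))\geq C_X^{-1}r^s>0$. Hence every nonempty open set has positive measure, so $\supp\mu_X=X$.

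\emph{Total boundedness.} I use a packing argument. Fix $0<r<\diam(X)$ and let $\{x_1,\dots,x_k\}\subset X$ be any $r$-separated set, meaning $d_X(x_i,x_j)\geq r$ for $i\neq j$. The balls $B_X(x_i,r/2)$ are then pairwise disjoint. Since $\mu_X$ is a probability measure,
\begin{equation*}
	1\geq \sum_{i=1}^k \mu_X(B_X(x_i,r/2)) \geq k\,C_X^{-1}(r/2)^s .
\end{equation*}
This gives $k\leq C_X 2^s r^{-s}$. So every $r$-separated set is finite with uniformly bounded cardinality, and a maximal $r$-separated set exists. By maximality it is an $r$-net, so $X$ is covered by finitely many $r$-balls. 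For $r\geq\diam(X)$ a single ball already covers $X$. Thus $X$ is totally bounded.

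\emph{Compactness.} An mm-space is Polish, so $(X,d_X)$ is complete. A complete and totally bounded metric space is compact.

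I expect no real obstacle here. The only point needing care is applying the lower bound only for radii in $(0,\diam(X))$, which is why the packing uses radius $r/2<\diam(X)$.
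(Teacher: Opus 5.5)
Your proof is correct and is essentially the paper's argument: disjoint balls of half the separation radius bound the size of separated sets via the lower Ahlfors bound, a maximal separated set gives a finite net, completeness yields compactness, and positivity of $\mu_X$ on every small ball gives full support. The case $r\geq\diam(X)$ is superfluous, since total boundedness only concerns small radii, and for $r=\diam(X)$ a single open ball need not cover $X$.
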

\begin{proof}
	By \eqref{eq:ahlfors-regular}, $\mu_X(B_X(x,\epsilon/2)) \geq (\epsilon/2)^s/C_X$ for every $x \in X$ and $0 < \epsilon < \diam(X)$.
	If $x_1, \dots, x_k \in X$ are $\epsilon$-separated, the balls $B_X(x_i,\epsilon/2)$ are pairwise disjoint, and since $\mu_X$ is a probability measure, $k \leq C_X (2/\epsilon)^s$.
	The $\epsilon$-balls around an $\epsilon$-separated set of maximal cardinality cover $X$, so $X$ is totally bounded.
	Since $d_X$ is complete, $X$ is compact.
	Finally, \eqref{eq:ahlfors-regular} holds at every $x_0 \in X$, so every ball has positive measure and $\mu_X$ is fully supported.
\end{proof}

The following lemma shows that for an Ahlfors $s$-regular mm-space, the three relevant notions of dimension coincide with $s$.
In \Cref{sec:geometry-Kn}, we use it to bound the dimensions of $\Kn_n(X)$, which then enter the convergence rates of \cite{WeedBach2019} in the proof of \Cref{thm:main-sampling-intrinsic} in \Cref{sec:convergence-rates}.

\begin{lemma}
	\label{lemma:ahlfors-dimensions}
	Let $(X, d_X, \mu_X)$ be an Ahlfors $s$-regular mm-space and $\mathcal{H}_X^s$ the $s$-dimensional Hausdorff measure on $(X, d_X)$. Then:
	\begin{enumerate}
		\item\label{it:dimensions_abs_continuous} $\mu_X$ is absolutely continuous with respect to $\mathcal{H}_X^s$, written $\mu_X \ll \mathcal{H}_X^s$.
		\item\label{it:dimensions_hauss_mink} $d_H(X) = d_M(X) = s$.
		\item\label{it:dimensions_wasserstein} $d_p^*(\mu_X) =s$ for $1 \leq p \leq s/2$.
	\end{enumerate}
\end{lemma}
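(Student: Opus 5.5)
We prove the three items in order. Each rests on the two-sided bound \eqref{eq:ahlfors-regular} together with standard covering arguments. By \Cref{lemma:ahlfors-compact}, $X$ is compact and $\mu_X$ is fully supported, so all balls have positive measure.

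For \ref{it:dimensions_abs_continuous}, we show that $\mu_X(A)\le C\,\mathcal{H}^s_X(A)$ for every Borel $A$, which gives absolute continuity. Let $\mathcal{H}^s_X(A)=0$ and fix $\delta>0$. Then $A$ has a countable cover by sets $E_i$ with $\diam E_i<\delta$ and $\sum_i (\diam E_i)^s<\delta$. Each nonempty $E_i$ lies in a ball $B_X(x_i,r_i)$ with $x_i\in E_i$ and $r_i=2\diam E_i$; if $\diam E_i=0$, take a tiny radius chosen so that the sum stays small. The upper Ahlfors bound then gives
\[
\mu_X(A)\le \sum_i C_X r_i^s\le C_X2^s\delta .
\]
The only care needed is for radii exceeding $\diam(X)$, where the Ahlfors bound does not apply; this is harmless once $\delta<\diam(X)/2$. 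Letting $\delta\to0$ gives $\mu_X(A)=0$.

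For \ref{it:dimensions_hauss_mink}, the argument in the proof of \Cref{lemma:ahlfors-compact} already shows that an $\epsilon$-separated set has at most $C_X(2/\epsilon)^s$ points. Hence the covering number satisfies $N(\epsilon)\lesssim \epsilon^{-s}$, so $\overline{d_M}(X)\le s$. Conversely, any cover by $\epsilon$-balls needs at least $\epsilon^{-s}/C_X$ balls, because each ball has measure at most $C_X\epsilon^s$ and the total mass is $1$; thus $\underline{d_M}(X)\ge s$. Since always $d_H\le \underline{d_M}$, it remains to show $d_H(X)\ge s$. This follows from the mass distribution principle: any set $E$ with $\diam E=r$ small lies in a ball of radius $r$, so $\mu_X(E)\le C_X r^s$. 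Therefore every $\delta$-cover satisfies $\sum(\diam E_i)^s\ge 1/C_X$, and so $\mathcal{H}^s_X(X)>0$.

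For \ref{it:dimensions_wasserstein}, we use the sandwich from Weed--Bach, namely $d_*(\mu)\le d_p^*(\mu)\le d^*_{M}(\mu)$ for $p\le d_*(\mu)/2$ \cite[Prop.~2]{WeedBach2019}. Here $d_*(\mu)$ is the lower (Hausdorff-type) dimension of the measure, $\inf\{d_H(S):\mu(S)\ge 1\}$ or its $\tau$-variant, and the upper bound is the Minkowski dimension $d_M(\mu)=d_M(\supp\mu)=d_M(X)=s$ by \ref{it:dimensions_hauss_mink}. For the lower bound, take any $S$ with $\mu_X(S)$ positive (or $\ge 1-\tau$). The mass distribution principle applied to $\mu_X|_S$ gives $\mathcal{H}^s_X(S)\gtrsim \mu_X(S)>0$, hence $d_H(S)\ge s$; alternatively, part \ref{it:dimensions_abs_continuous} gives directly that sets of positive $\mu_X$-measure have positive $\mathcal{H}^s$-measure. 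So $d_*(\mu_X)= s$, the admissible range $p\le s/2$ matches the statement, and the sandwich collapses to $d_p^*(\mu_X)=s$.

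The main obstacle is this last step. It requires quoting the correct form of the Weed--Bach comparison, including their exact definition of the lower dimension and the constraint $p\le d_*/2$, and checking that their normalization to diameter at most $1$ does not affect dimensions, which are scale invariant.
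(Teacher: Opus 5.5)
Your route differs from the paper's and, apart from one misquoted definition in \textit{(iii)}, it is sound. The paper imports the two-sided comparison $\frac{1}{C'}\mathcal{H}^s_X\le\mu_X\le C'\mathcal{H}^s_X$ from Heinonen and reads \textit{(i)} off it. It cites Heinonen and Mattila for \textit{(ii)}. In \textit{(iii)} it uses the comparison again to verify that $X$ is a regular set of dimension $s$ in Weed--Bach's sense, which is a condition on $\mathcal{H}^s_X$ of balls, so that their Prop.~8 applies as a black box. You prove \textit{(i)} and \textit{(ii)} directly from the Ahlfors bounds on $\mu_X$ by covering and packing arguments, which are correct. You then replace Prop.~8 by the sandwich $d_*(\mu_X)\le d_p^*(\mu_X)\le d_M(\mu_X)$. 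This is self-contained and needs only the easy inequality $\mu_X\lesssim\mathcal{H}^s_X$. The reverse inequality never enters; it requires a Vitali-type covering argument, and the paper needs it to check regularity with respect to $\mathcal{H}^s_X$. The paper's proof is shorter but rests entirely on citations.

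The one real defect is your description of $d_*$. In Weed--Bach the lower Wasserstein dimension is not Hausdorff-type. It is $d_*(\mu)=\lim_{\tau\to0}\liminf_{\epsilon\to0} d_\epsilon(\mu,\tau)$ with $d_\epsilon(\mu,\tau)=\log N_\epsilon(\mu,\tau)/(-\log\epsilon)$, where $N_\epsilon(\mu,\tau)$ is the smallest $\epsilon$-covering number of a set of $\mu$-measure at least $1-\tau$. The quantity $\inf\{d_H(S):\mu(S)=1\}$ is instead their Hausdorff dimension $d_H(\mu)$. So your mass distribution argument proves $d_H(\mu_X)\ge s$. To pass to $d_*(\mu_X)\ge s$ you would still need the general inequality $d_H(\mu)\le d_*(\mu)$, which you neither state nor prove.

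The simplest repair bypasses Hausdorff dimension: rerun your lower Minkowski bound from \textit{(ii)} on sets of large measure. If $\mu_X(S)\ge1-\tau$ and $S$ is covered by $N$ balls of radius $\epsilon<\diam(X)$, then $1-\tau\le NC_X\epsilon^s$. Hence $d_\epsilon(\mu_X,\tau)\ge s-\log\bigl(C_X/(1-\tau)\bigr)/(-\log\epsilon)$, and so $d_*(\mu_X)\ge s$. Radius-versus-diameter conventions and the rescaling to diameter $1$ only affect constants.

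The thresholds in your sandwich are also slightly misquoted. The inequality $d_*(\mu)\le d_p^*(\mu)$ holds for every $p$, directly from the definitions, since $\epsilon^{sp/(s-2p)}<\tau$ for small $\epsilon$. The inequality $d_p^*(\mu)\le d_M(\mu)$ requires $p\le d_M(\mu)/2$, not $p\le d_*(\mu)/2$. Here both thresholds equal $s/2$, so the sandwich closes for $1\le p\le s/2$.
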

\begin{proof}
	Assume $(X, d_X, \mu_X)$ is Ahlfors $s$-regular.
	By \cite[Ex.~8.11]{heinonen}, there exists $C'>0$ such that
	\begin{equation}
		\label{eq:hausdorff_ahlfors_bilipschitz}
		\frac{1}{C'} \mathcal{H}_X^s(E) \leq \mu_X(E) \leq C' \mathcal{H}_X^s(E)
	\end{equation}
	for every Borel $E \subset X$.
	Then:
	\begin{enumerate}
		\item Every null set $E$ of $\mathcal{H}_X^s$ satisfies $\mu_X(E) \leq C' \mathcal{H}_X^s(E) = 0$, so $\mu_X \ll \mathcal{H}_X^s$ (and, analogously, $\mathcal{H}_X^s \ll \mu_X$).
		\item Combining equations \eqref{eq:ahlfors-regular} and \eqref{eq:hausdorff_ahlfors_bilipschitz} yields
		\begin{equation}
			\label{eq:hausdorff_measure_ahlfors}
			\frac{1}{C_XC'} \cdot r^{s} \leq \mathcal{H}_X^s(B_X(x_0,r)) \leq (C_XC') \cdot r^{s},
		\end{equation}
		so by \cite[Sec.~8.7]{heinonen}, $d_H(X) = s$. Regarding $d_M(X)$, the argument of \cite[Thm.~5.7]{mattila} can be extended from $\R^n$ to any metric space, so $d_M(X) = s$.
		\item By \Cref{lemma:ahlfors-compact} and \eqref{eq:hausdorff_measure_ahlfors}, $\supp \mu_X = X$ is a regular set of dimension $s$ in the sense of \cite[Def.~6]{WeedBach2019}.
		By \ref{it:dimensions_abs_continuous}, \cite[Prop.~8]{WeedBach2019} then yields $d_p^*(\mu_X) = s$ for $1 \leq p \leq s/2$.
	\end{enumerate}
\end{proof}

\section{Metric \texorpdfstring{$n$}{n}-point invariants}
\label{sec:n-point-invariants}

Metric $n$-point invariants formalize quantities that depend only on the pairwise distances among $n$ points.
In the simplest cases, $n$-point invariants take values in $\R$ or $\R^k$, but more general spaces also arise naturally, for example in the case of persistence diagrams.
From now on, we fix a Polish space $(Z,d_Z)$ equipped with its Borel $\sigma$-algebra.

\begin{defn}
	\label{def:n_invariant}
    A \emph{metric $n$-point invariant} valued in $Z$, or \emph{$n$-invariant} for short, is any measurable function $F \colon \Kn_n \to Z$.
	We say that an $n$-invariant is \emph{$L$-Lipschitz} if
   \begin{equation*}
        d_Z(F(M), F(M')) \leq L \cdot \|M - M' \|_\infty
   \end{equation*}
   for any $M, M' \in \Kn_n$, and \emph{Lipschitz} if it is $L$-Lipschitz for some $L \geq 0$, in which case we write $\operatorname{Lip}(F)$ for the smallest such $L$.
\end{defn}
Any $n$-invariant induces a map on $X^n$ via $F \circ \Psi_n \colon X^n \to Z$.
Conversely, any map $F_X \colon X^n \to Z$ that satisfies $F_X(\mathbf{x}) = F_X(\mathbf{x}')$ whenever $\Psi_n(\mathbf{x}) = \Psi_n(\mathbf{x}')$ defines a map $F \colon \Kn_n(X) \to Z$.
We often abuse notation and write $F(\mathbf{x})$ instead of $F(\Psi_n(\mathbf{x}))$ or $F_X(\mathbf{x})$.

\begin{defn}
	Let $F \colon \Kn_n \to Z$ be an $n$-invariant.
	Given an mm-space $(X, d_X, \mu_X)$, the \emph{distribution of $F$ over $X$} is the pushforward
	\begin{align*}
		\nu_{F, (X, d_X, \mu_X)} &\coloneq F_\# (\mu_{n,X}) = (F \circ \Psi_n)_\#(\mu_X^{\otimes n}).
	\end{align*}
	Throughout, we suppress parameters that can be inferred from context and write, for instance, $\nu_{F,X}$ and $\nu_{F,\mu}$.
\end{defn}
\begin{remark}
	\label{rmk:invariant-random-variable}
	Note that an $n$-invariant $F$ is a $Z$-valued random variable on the probability space $(\Kn_n,\mu_{n,X})$ for every choice of mm-space $X$, and $\nu_{F,X}$ is its distribution.
	Equivalently, $\nu_{F,X}$ is the distribution of $F(\mathbf{x})$ for a random $n$-tuple $\mathbf{x}\sim\mu_X^{\otimes n}$ of independent $\mu_X$-distributed points.
\end{remark}
The collection of all metric $n$-point invariants is closed under measurable postcomposition, and their induced distributions behave naturally under this operation.
\begin{prop}
	\label{prop:invariant-postcomposition}
	Let $F \colon \Kn_n \to Z$ be an $n$-invariant and let $f \colon Z \to Z'$ be measurable. Then $f\circ F$ is a $Z'$-valued $n$-invariant and $\nu_{f\circ F,X}=f_\#\nu_{F,X}.$
\end{prop}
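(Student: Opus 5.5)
The claim has two parts: measurability of $f\circ F$, and functoriality of pushforwards. Both follow directly from the definitions, so the plan is to verify each in turn.

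\emph{Measurability.} By \Cref{def:n_invariant}, $F\colon\Kn_n\to Z$ is measurable with respect to the Borel $\sigma$-algebra on $\Kn_n$ and on $Z$. The map $f\colon Z\to Z'$ is measurable by assumption. A composition of measurable maps is measurable, since for any Borel $A\subset Z'$ we have $(f\circ F)^{-1}(A)=F^{-1}(f^{-1}(A))$, and this set is Borel in $\Kn_n$. Provided $Z'$ is a Polish space with its Borel $\sigma$-algebra, as fixed at the start of this section, $f\circ F$ is therefore a $Z'$-valued $n$-invariant.

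\emph{Distribution.} Here I would evaluate both measures on an arbitrary Borel set $A\subset Z'$. Using the definition $\nu_{F,X}=F_\#\mu_{n,X}$, we get
\begin{align*}
\nu_{f\circ F,X}(A)=\mu_{n,X}\bigl((f\circ F)^{-1}(A)\bigr)=\mu_{n,X}\bigl(F^{-1}(f^{-1}(A))\bigr)=\nu_{F,X}\bigl(f^{-1}(A)\bigr)=(f_\#\nu_{F,X})(A).
\end{align*}
Since $A$ is arbitrary, this gives $(f\circ F)_\#=f_\#\circ F_\#$ and hence the identity $\nu_{f\circ F,X}=f_\#\nu_{F,X}$.

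There is no real obstacle in either step. The only point needing care is that $F$ is defined on all of $\Kn_n$, while $\mu_{n,X}$ is supported on $\Kn_n(X)\subset\Kn_n$. This causes no issue, because $\mu_{n,X}$ is regarded as a Borel measure on $\Kn_n$ and preimages are taken in $\Kn_n$.
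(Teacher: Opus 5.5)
Your proposal is correct and follows the paper's argument, which is a one-line appeal to measurability of compositions and functoriality of pushforwards; you have just spelled out the preimage computation explicitly. Your remark that $Z'$ must be Polish for $f\circ F$ to qualify as an $n$-invariant in the sense of \Cref{def:n_invariant} is an assumption the paper also makes tacitly.
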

\begin{proof}
	This follows from measurability of compositions and functoriality of pushforwards.
\end{proof}

The induced measure $\nu_{F,X}$ should be viewed as the distribution of the geometric quantity measured by $F$ across $X$.
As with any probability distribution, we can describe $\nu_{F,X}$ through statistics such as its mean and higher moments, each of which then summarizes an aspect of the geometry of $X$.
For general $Z$, this is obtained from the associated Fr\'echet function $\Phi_{\nu_{F,X}}(z_0) \coloneq \int_Z d_Z(z,z_0)^2 \, d\nu_{F,X}(z)$, whose minimizers, when they exist, are the Fr\'echet means of $\nu_{F,X}$.
When $Z = \R$ or $Z = \R^k$, the Fr\'echet mean reduces to the ordinary mean, and one recovers the familiar Euclidean summaries of a distribution.
More generally, we can compute $\bbE_{\nu_{F,X}}[h] \coloneq \int_Z h(z)\, d\nu_{F,X}(z)$ for a measurable $h \colon Z \to \R$ and recover, when $(Z, d_Z)$ is Euclidean space for instance, the moments, variances, and quantiles of $\nu_{F,X}$. More importantly for our purposes, we can use the Wasserstein distance to compare the distributions of an $n$-invariant $F$ on different mm-spaces, and thus quantify how different their geometry is in terms of the properties extracted by $F$.

There are many examples of $n$-invariants in the literature.
In the following, we discuss the ones we will use in our applications to Hamming geometry and the analysis of viral sequence data in \Cref{sec:hamming}, along with a few closely related ones.
Several of them are, or are closely related to, quantities used to study the geometry of evolutionary processes.

\begin{example}[Pairwise distance]
	\label{inv:distance}
	The simplest $n$-invariant is the canonical embedding $\Kn_2 \hookrightarrow \R$, which we call the \emph{distance map} and denote as $\dist$. This name reflects the fact that the induced map $\dist_X \colon X^2 \to \R$ is
	\begin{align*}
		\dist_X(\mathbf{x}) = d_X(x_1, x_2)
	\end{align*}
	for any $(X, d_X)$. Accordingly, the induced measure $\nu_{\dist,X}$ is called the \emph{distance distribution of $X$}; cf.\ \cite[Def.~5.4]{Memoli2011}. Note that any $Z$-valued $2$-invariant has the form $h \circ \dist$ for some measurable $h \colon \R \to Z$.
	For sequence data, $\nu_{\dist,X}$ is known as the \emph{mismatch distribution} in population genetics.
	Its mean is the nucleotide diversity \cite{NeiLi1979} scaled by the sequence length, and its shape is used to infer demographic history \cite{RogersHarpending1992}.
\end{example}

\begin{example}[Four-point hyperbolicity deficit]
	\label{inv:hyperbolicity}
	An important example with $n=4$ is the \emph{four-point hyperbolicity deficit}.
	Let $(X, d_X)$ be a metric space and $\mathbf{x} \in X^4$.
	For a partition $\{i,j\}\cup\{k,\ell\}=\{1,2,3,4\}$, denote the associated sum of pairwise distances by
	\begin{align*}
		s_{ij,k\ell}(\mathbf{x}) \coloneq d_X(x_i,x_j)+d_X(x_k,x_\ell).
	\end{align*}
	Collect the three possible sums in $S(\mathbf{x}) \coloneq\bigl(s_{13,24}(\mathbf{x}),s_{12,34}(\mathbf{x}),s_{14,23}(\mathbf{x})\bigr)$, and let $S_{(1)}(\mathbf{x})\geq S_{(2)}(\mathbf{x})\geq S_{(3)}(\mathbf{x})$ denote its components in descending order.
	Then the hyperbolicity deficit of $\mathbf{x}$ is the upper gap
	\begin{align*}
		\hyp(\mathbf{x})
		&\coloneq \frac{1}{2}\left(S_{(1)}(\mathbf{x})-S_{(2)}(\mathbf{x})\right).
	\end{align*}
	If $S_{(1)}(\mathbf{x})$ and $S_{(2)}(\mathbf{x})$ coincide, then $\hyp(\mathbf{x})=0$.
	This is exactly the case when the distances of the quadruple $\mathbf{x}$ behave like those on a tree, in the sense that they can be realized by a tree with four leaves corresponding to $x_1,x_2,x_3,x_4$; see \Cref{fig:hyperbolicity}.
	In particular, $\hyp(\mathbf{x}) = 0$ whenever two of the points coincide: if, say, $x_1 = x_2$, then $s_{13,24}(\mathbf{x}) = s_{14,23}(\mathbf{x}) \geq d_X(x_3,x_4) = s_{12,34}(\mathbf{x})$ by the triangle inequality.
	For arbitrary $\mathbf{x}$, the triangle inequality gives $s_{13,24}(\mathbf{x}) - s_{12,34}(\mathbf{x}) \leq 2d_X(x_2,x_3)$, and likewise for any other pair of sums, so $\hyp(\mathbf{x}) \leq \diam(X)$.

	\begin{figure}
		\centering
		\begin{tikzpicture}[scale=1.8]
    \begin{scope}
        \node at (0.65,-0.55) {\small $\hyp(\mathbf{x})>0$};
        \coordinate (x1) at (-0.10,-0.04);
        \coordinate (x2) at (1.48,0.15);
        \coordinate (x3) at (1.10,1.08);
        \coordinate (x4) at (-0.22,0.78);
        \draw[thick] (x3) to[out=188,in=8] (x1);
        \draw[preaction={draw=white,line width=4pt}, thick] (x4) to[out=-8,in=188] (x2);
        \draw[dashed] (x4) to[out=15,in=165] (x3);
        \draw[dashed] (x1) to[out=-15,in=-165] (x2);
        \foreach \pt/\lab/\pos in {x1/$x_1$/below,x2/$x_2$/below,x3/$x_3$/above,x4/$x_4$/above}
            \node[fill=black,circle,inner sep=1.8pt,label=\pos:{\lab}] at (\pt) {};
    \end{scope}

    \begin{scope}[xshift=4.0cm]
        \node[align=center] at (0.9,-0.5) {\small $\hyp(\mathbf{x})=0$};
        \coordinate (y1) at (0.00,-0.08);
        \coordinate (y4) at (0.00,0.88);
        \coordinate (y2) at (1.75,-0.08);
        \coordinate (y3) at (1.75,0.88);
        \coordinate (uL) at (0.55,0.35);
        \coordinate (uU) at (0.55,0.41);
        \coordinate (vL) at (1.20,0.35);
        \coordinate (vU) at (1.20,0.41);
        \coordinate (mL1) at (0.82,0.35);
        \coordinate (mU1) at (0.82,0.41);
        \coordinate (m) at (0.875,0.38);
        \coordinate (mL2) at (0.93,0.35);
        \coordinate (mU2) at (0.93,0.41);
        \coordinate (b1) at (0.00,-0.13);
        \coordinate (bu) at (0.55,0.31);
        \coordinate (bv) at (1.20,0.31);
        \coordinate (b2) at (1.75,-0.13);
        \coordinate (c4) at (0.00,0.93);
        \coordinate (cu) at (0.55,0.45);
        \coordinate (cv) at (1.20,0.45);
        \coordinate (c3) at (1.75,0.93);
        \draw[black, thick] (y1) -- (uL) -- (mL1);
        \draw[black, thick] (mL1) -- (m) -- (mU2);
        \draw[black, thick] (mU2) -- (vU) -- (y3);
        \draw[black, thick] (y4) -- (uU) -- (mU1);
        \draw[preaction={draw=white,line width=4pt}, black, thick] (mU1) -- (m) -- (mL2);
        \draw[black, thick] (mL2) -- (vL) -- (y2);
        \draw[dashed, rounded corners=2pt] (b1) -- (bu) -- (bv) -- (b2);
        \draw[dashed, rounded corners=2pt] (c4) -- (cu) -- (cv) -- (c3);
        \foreach \pt/\lab/\pos in {y1/$x_1$/below,y2/$x_2$/below,y3/$x_3$/above,y4/$x_4$/above}
            \node[fill=black,circle,inner sep=1.8pt,label=\pos:{\lab}] at (\pt) {};
    \end{scope}

    \draw[thick] (2.25,0.27) -- (2.55,0.27);
    \node at (2.75,0.27) {$S_{(1)}$};
    \draw[thick] (2.95,0.27) -- (3.25,0.27);
    \draw[dashed,thick] (2.25,-0.13) -- (2.55,-0.13);
    \node at (2.75,-0.13) {$S_{(2)}$};
    \draw[dashed,thick] (2.95,-0.13) -- (3.25,-0.13);
\end{tikzpicture}
		\caption{
			Illustration of the four-point hyperbolicity deficit.
			The hyperbolicity deficit $\hyp(\mathbf{x}) = \frac{1}{2}(S_{(1)}(\mathbf{x})-S_{(2)}(\mathbf{x}))$ measures the failure of a quadruple to behave like leaves on a tree.
			In a generic metric space, shown on the left, $S_{(1)}$ and $S_{(2)}$ do not reuse the same distances, so $S_{(1)}>S_{(2)}$ and $\hyp(\mathbf{x})>0$.
			In contrast, when the distances arise from a tree, shown on the right, $S_{(1)}$ and $S_{(2)}$ arise from the same edges with equal multiplicity and therefore coincide, giving $\hyp(\mathbf{x})=0$.
		}
		\label{fig:hyperbolicity}
	\end{figure}

	For a metric space $X$, the supremum $\hyp(X) \coloneq \sup_{\mathbf{x}\in X^4}\hyp(\mathbf{x})$ is known as the \emph{Gromov hyperbolicity} of $X$ \cite[Sec.~1.1, p.~89]{Gromov1987}.
	It is a basic notion in metric geometry and geometric group theory that measures worst-case deviation from tree-likeness.
	For geodesic spaces, bounds on $\hyp(X)$ are equivalent to a uniform slim-triangle condition.
	In particular, $\hyp(X)=0$ if and only if $X$ embeds isometrically into an $\R$-tree (e.g.\ \cite[Thm.~8]{dress-tight-span}).
	In phylogenetics, the condition $\hyp(\mathbf{x})=0$ for all quadruples is known as the four-point condition, which characterizes the metrics that can be realized by trees \cite{Buneman1974}.

	For an mm-space $(X,d_X,\mu_X)$, the distribution $\nu_{\hyp,X}$ complements this worst-case measure.
	If $X$ is an $\R$-tree, every quadruple satisfies the four-point condition and $\nu_{\hyp,X}=\delta_0$.
	Since the only coupling of any probability measure with $\delta_0$ is the product coupling, the moments of $\nu_{\hyp,X}$ are its Wasserstein distances to the hyperbolicity distribution of a tree,
	\begin{align*}
		\W_p(\nu_{\hyp,X},\delta_0)^p = \bbE_{\mathbf{x}\sim\mu_X^{\otimes4}}\bigl[\hyp(\mathbf{x})^p\bigr]
		\qquad\text{for all } 1\leq p<\infty.
	\end{align*}
	In particular, up to a factor of $\tfrac13$, the $1$-Wasserstein distance $\W_1(\nu_{\hyp,X},\delta_0)$ is the average hyperbolicity of Chatterjee and Sloman with the base point averaged over $\mu_X$ (\Cref{sec:related-work}), while in the limit $p\to\infty$ we recover the Gromov hyperbolicity $\hyp(X)$ as long as $\mu_X$ has full support.

	Two classical measures of tree-likeness in phylogenetics (\Cref{sec:related-work}) are closely related to $\nu_{\hyp,X}$.
	In our notation, the $\delta$-score of Holland et al.\ \cite{HollandEtAl2002} is
	\begin{align*}
		\delta(\mathbf{x}) \coloneq \frac{S_{(1)}(\mathbf{x})-S_{(2)}(\mathbf{x})}{S_{(1)}(\mathbf{x})-S_{(3)}(\mathbf{x})} = \frac{2\hyp(\mathbf{x})}{S_{(1)}(\mathbf{x})-S_{(3)}(\mathbf{x})}
	\end{align*}
	when $S_{(1)}(\mathbf{x})>S_{(3)}(\mathbf{x})$, and $0$ otherwise, and the $\delta$-plot of a finite metric space is the histogram of these scores over all four-element subsets.
	The normalization makes $\delta$ invariant under rescaling of the metric, but also discontinuous at quadruples with $S_{(1)}(\mathbf{x})=S_{(2)}(\mathbf{x})=S_{(3)}(\mathbf{x})$, near which it takes the values $0$ and $1$.
	
	The Q-residual of Gray et al.\ \cite{GrayBryantGreenhill2010} first rescales the distances so that their average over pairs of distinct points is $1$, and then averages $(S_{(1)}(\mathbf{x})-S_{(2)}(\mathbf{x}))^2=4\hyp(\mathbf{x})^2$ over all four-element subsets.
	Since $\hyp$ scales linearly with the metric and vanishes on quadruples with a repeated point, the Q-residual of a metric space $X$ with $N\geq4$ points and the uniform measure $\mu_X$ equals
	\begin{align*}
		\frac{N(N-1)}{(N-2)(N-3)}\cdot\frac{4\,\W_2(\nu_{\hyp,X},\delta_0)^2}{\bigl(\int_\R t\,d\nu_{\dist,X}(t)\bigr)^2}.
	\end{align*}
	Up to a factor that tends to $1$ as $N\to\infty$, it is thus the squared $2$-Wasserstein distance between $\nu_{\hyp,X}$ and the hyperbolicity distribution of a tree, measured in units of the mean distance.

\end{example}

\begin{example}[Higher hyperbolicity \cite{higher-hyperbolicity}]
	\label{inv:higher_hyperbolicity}
	Let $(X,d_X)$ be a metric space, $m\geq1$, and let $I$ be an index set with $|I|=2(m+1)$.
	For $\mathbf{x}=(x_i)_{i\in I}\in X^{2(m+1)}$ and a permutation $\alpha$ of $I$, write
	\begin{align*}
		p_\alpha(\mathbf{x})
		&\coloneq\sum_{i\in I}d_X(x_i,x_{\alpha(i)}).
	\end{align*}
	Set $P(\mathbf{x}) \coloneq\bigl(p_\alpha(\mathbf{x})\bigr)_{\alpha \in \operatorname{Sym}(I)}$, with $\alpha$ ranging over all permutations of $I$, and let $P_{(1)}(\mathbf{x})\geq P_{(2)}(\mathbf{x})\geq\dots$ denote its components in descending order.
	The \emph{$m$-th hyperbolicity deficit} of $\mathbf{x}$ is the upper gap
	\begin{align*}
		\hyp_m(\mathbf{x})
		&\coloneq\frac{1}{2}\left(P_{(1)}(\mathbf{x})-P_{(2)}(\mathbf{x})\right).
	\end{align*}
	For $\delta\geq0$, the space $X$ is called \emph{$(m,\delta)$-hyperbolic} if $\hyp_m(\mathbf{x})\leq\delta$ for every $\mathbf{x}\in X^{2(m+1)}$.
	The \emph{$m$-th hyperbolicity} of $X$ is the optimal such bound,
	\begin{align*}
		\hyp_m(X)
		&\coloneq\sup_{\mathbf{x}\in X^{2(m+1)}}\hyp_m(\mathbf{x}).
	\end{align*}
	This generalizes Gromov's four-point hyperbolicity in the sense that every space with $\hyp_m(X)\leq\delta$ satisfies a uniform slim $(m+1)$-simplex condition \cite[Thm.~1.3]{higher-hyperbolicity}, while $\hyp_m(X)=0$ characterizes spaces of combinatorial dimension at most $m$ \cite[Cor.~4.4]{higher-hyperbolicity}.
	Indeed, when $m=1$, the identities $p_{(ij)(k\ell)}(\mathbf{x})=2s_{ij,k\ell}(\mathbf{x})$ and $p_{(ijk\ell)}(\mathbf{x})=s_{ij,k\ell}(\mathbf{x})+s_{i\ell,jk}(\mathbf{x})$, together with the triangle inequality, give $P_{(1)}(\mathbf{x})-P_{(2)}(\mathbf{x})= 2S_{(1)}(\mathbf{x}) - (S_{(1)}(\mathbf{x}) + S_{(2)}(\mathbf{x})) = S_{(1)}(\mathbf{x})-S_{(2)}(\mathbf{x})$.
	Hence $\hyp_1(\mathbf{x})=\hyp(\mathbf{x})$ for every $\mathbf{x}\in X^4$, and consequently $\hyp_1(X)=\hyp(X)$ (cf.\ \cite[Prop.~2.2]{higher-hyperbolicity}).

\begin{remark*}
	The provided definition of $(m,\delta)$-hyperbolicity is equivalent to the one in \cite[Def.~1.1]{higher-hyperbolicity}, which states that $X$ is $(m,\delta)$-hyperbolic if, for every $\mathbf{x}=(x_i)_{i\in I}$, there exists a permutation $\alpha\neq {-\operatorname{Id}}$ of $I=\{\pm 1, \pm2, \dots,\pm(m+1)\}$ such that
	\begin{align*}
		\sum_{i\in I}d_X(x_i,x_{-i})
		&\leq \sum_{i\in I}d_X(x_i,x_{\alpha(i)})+2\delta.
	\end{align*}
	In the notation above, this expression is simply $p_{-\operatorname{Id}}(\mathbf{x})\leq p_\alpha(\mathbf{x})+2\delta$.
	Suppose first that $\hyp_m(\mathbf{x})\leq\delta$ for every $\mathbf{x}$ and fix $\mathbf{x}$.
	We may assume $p_{-\operatorname{Id}}(\mathbf{x})=P_{(1)}(\mathbf{x})$, since otherwise $p_{-\operatorname{Id}}(\mathbf{x})\leq P_{(1)}(\mathbf{x})$ providing an $\alpha\neq -\operatorname{Id}$ that satisfies the desired inequality with $\delta=0$.
	Then there is some $\alpha\neq{-\operatorname{Id}}$ such that $P_{(2)}(\mathbf{x})=p_\alpha(\mathbf{x})$, which gives $p_{-\operatorname{Id}}(\mathbf{x})-p_\alpha(\mathbf{x})=2\hyp_m(\mathbf{x})\leq2\delta$.
	Conversely, suppose that $X$ satisfies \cite[Def.~1.1]{higher-hyperbolicity} and fix $\mathbf{x}$.
	We may assume $\hyp_m(\mathbf{x})>0$, since otherwise $\hyp_m(\mathbf{x})=0\leq\delta$ and there is nothing to prove.
	Then $P_{(1)}(\mathbf{x})>P_{(2)}(\mathbf{x})$, so $P_{(1)}(\mathbf{x})=p_\beta(\mathbf{x})$ for some unique permutation $\beta$.
	The identity $p_\beta(\mathbf{x})=p_{\beta^{-1}}(\mathbf{x})$ forces $\beta=\beta^{-1}$.
	By \cite[Lem.~2.1]{higher-hyperbolicity}, $\beta$ has no fixed points, hence is a product of $m+1$ disjoint transpositions and therefore conjugate to $-\operatorname{Id}$.
	As the hypothesis is assumed for every $\mathbf{x}$, we may relabel and take $\beta={-\operatorname{Id}}$.
	Now \cite[Def.~1.1]{higher-hyperbolicity} yields some $\alpha\neq{-\operatorname{Id}}$ with $P_{(1)}(\mathbf{x})\leq p_\alpha(\mathbf{x})+2\delta\leq P_{(2)}(\mathbf{x})+2\delta$, that is, $\hyp_m(\mathbf{x})\leq\delta$.
\end{remark*}

\end{example}

\begin{example}[$(n,k)$-persistence diagram \cite{GomezMemoli2024}]
	\label{inv:persistence-set}
	For a metric space $(X,d_X)$ and $r \geq 0$, the Vietoris--Rips complex is
	\begin{align*}
		\vr_r(X)
		&\coloneq \{\sigma \subset X \setcolon |\sigma| < \infty \text{ and } \diam(\sigma) \leq r\},
	\end{align*}
	and we write $\dgmVR_k(X)$ for its degree-$k$ persistence diagram.
	The construction uses only distances, so we obtain an $n$-invariant by extending it to distance matrices
	\begin{align*}
		\dgmVR_{n,k} \colon \Kn_n \to \overline{\cD},
	\end{align*}
	where $\overline{\cD}$ is a suitable space of persistence diagrams.
	This is the factorization of $\dgmVR_{n,k}$ through $\Psi_n$ observed in \cite{GomezMemoli2024}, which also shows that it is continuous and therefore measurable.
	Following \cite[Defs.~3.10 and~3.18]{GomezMemoli2024}, we write $\Dvr{n,k}(X) \coloneq \dgmVR_{n,k}(\Kn_n(X))$ for the \emph{$(n,k)$-persistence set} of $X$, the collection of all degree-$k$ diagrams realized by at most $n$ points of $X$, and $\Uvr{n,k}(X) \coloneq \nu_{\dgmVR_{n,k},X}$ for the \emph{$(n,k)$-persistence measure} of $X$, which records how often each of them occurs.

	In \Cref{def:n_invariant} we require the target to be Polish.
	The unrestricted space of countable diagrams with the bottleneck distance is not separable \cite[Ex.~5.3]{CheEtAl2024}, but \cite{Blumberg2014} proved that the sets $\cD_j$ of diagrams with at most $j$ points and the completion $\overline{\cD}$ of $\bigcup_{j \geq 0} \cD_j$ with the bottleneck distance are both Polish \cite[Thms.~3.2 and~3.5]{Blumberg2014}.
	Since a space with $n$ points has only finitely many simplices at every scale, its diagram has finitely many points and thus lies in $\overline{\cD}$.
	In fact, $(\overline{\cD}, d_B)$ is the space of choice of \cite{Blumberg2014} for studying empirical distributions of persistence diagrams.
	Another possible choice is the set of diagrams with finite total persistence equipped with the $p$-Wasserstein distance; see \cite[Def.~4, Thms.~6 and~12]{Mileyko2011}.

	In phylogenetics, the $(4,1)$-persistence invariant $\dgmVR_{4,1}$ has a simple geometric interpretation.
	It is nonempty precisely when the four points form a chordless cycle in the Vietoris--Rips complex at some scale; its unique class is born when the last cycle edge appears and dies when the first diagonal appears.
	For sequence data, the cycle condition means that every adjacent pair agrees at more variable sites than either opposite pair.
	Thus, $(4,1)$-persistence provides a scale-sensitive summary of competing repeated-state patterns within quadruples of sequences, which are associated with recurrent and reticulate processes such as homoplasy and recombination.
\end{example}

\begin{prop}
	\label{prop:lipschitz-constants}
	The $n$-invariants in \Cref{inv:distance,inv:hyperbolicity,inv:higher_hyperbolicity,inv:persistence-set} satisfy the following Lipschitz bounds.
	\begin{enumerate}
		\item The distance map $\dist \colon \Kn_2 \to \R$ is $1$-Lipschitz.
		\item The four-point hyperbolicity deficit $\hyp \colon \Kn_4 \to \R$ is $2$-Lipschitz.
		Moreover, the function $X \mapsto \hyp(X)$ on metric spaces is $4$-Lipschitz with respect to the Gromov--Hausdorff distance.
		\item For every $m \geq 1$, the higher-hyperbolicity invariant $\hyp_m \colon \Kn_{2(m+1)} \to \R$ is $2(m+1)$-Lipschitz.
		Moreover, the function $X \mapsto \hyp_m(X)$ on metric spaces is $4(m+1)$-Lipschitz with respect to the Gromov--Hausdorff distance.
		\item For every $n \geq 2$ and $k \geq 0$, the persistence-diagram invariant $\dgmVR_{n,k} \colon \Kn_n \to \overline{\cD}$ is $1$-Lipschitz with respect to the supremum norm on $\Kn_n$ and the bottleneck distance on $\overline{\cD}$.
	\end{enumerate}
\end{prop}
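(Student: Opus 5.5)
Parts (i)–(iii) reduce to elementary facts about maxima of linear forms; part (iv) follows from the Vietoris–Rips stability theorem. Throughout, fix $M,M'\in\Kn_n$ and set $\eta\coloneq\|M-M'\|_\infty$.

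\emph{Distance.} For (i), the distance map sends $M$ to the single off-diagonal entry $M_{12}$. Hence $|M_{12}-M'_{12}|\leq\eta$, which is the claim.

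\emph{Four-point deficit.} For (ii), each pair sum $s_{ij,k\ell}$ is a sum of two entries, so it changes by at most $2\eta$. The order statistics $S_{(1)}$ and $S_{(2)}$ are $1$-Lipschitz functions of the vector $S$ in the sup norm; this is the standard fact that the $k$-th largest coordinate is $1$-Lipschitz, being a max of mins. Therefore each of $S_{(1)}$ and $S_{(2)}$ moves by at most $2\eta$, their difference moves by at most $4\eta$, and the factor $\tfrac12$ gives the constant $2$.

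\emph{Gromov hyperbolicity.} For the statement about $X\mapsto\hyp(X)$, I take a correspondence $R$ between $X$ and $Y$ with $\dis(R)<2\GH(X,Y)+\epsilon$. Given a quadruple $\mathbf{x}\in X^4$, I pick a related quadruple $\mathbf{y}$ in $Y$. Their distance matrices then differ by at most $\dis(R)$ in sup norm; this is the same computation as in \Cref{lemma:GH_vs_Kn}, but applied to the restricted relation. By the $2$-Lipschitz bound just proved,
\[
\hyp(\mathbf{x})\leq\hyp(\mathbf{y})+2\dis(R)\leq\hyp(Y)+4\GH(X,Y)+2\epsilon.
\]
Taking the supremum over $\mathbf{x}$, symmetrizing in $X$ and $Y$, and letting $\epsilon\to0$ gives the constant $4$.

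\emph{Higher hyperbolicity.} Part (iii) is identical in structure. Each $p_\alpha$ is a sum of $|I|=2(m+1)$ entries of $M$, so it is $2(m+1)$-Lipschitz. The top two order statistics of the finite vector $P$ inherit that constant, so their difference is $4(m+1)$-Lipschitz, and after halving, $\hyp_m$ is $2(m+1)$-Lipschitz on $\Kn_{2(m+1)}$. The Gromov–Hausdorff statement follows by the same correspondence argument, with constant $2\cdot 2(m+1)=4(m+1)$.

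\emph{Persistence diagrams.} For (iv), I regard $M$ and $M'$ as (pseudo)metric spaces on the common index set $\{1,\dots,n\}$. The Vietoris–Rips filtrations $\vr(M)$ and $\vr(M')$ are then filtrations of the same full simplex. The filtration value of a simplex $\sigma$ is $\max_{i,j\in\sigma}M_{ij}$, and these values differ by at most $\eta$. The stability theorem for filtrations of a common simplicial complex (via $\eta$-interleaving and the algebraic stability theorem) then gives
\[
d_B\bigl(\dgmVR_k(M),\dgmVR_k(M')\bigr)\leq\eta.
\]
Alternatively, one can use Gromov–Hausdorff stability, $d_B\leq2\GH$, combined with \Cref{lemma:GH_vs_Kn}, which bounds $2\GH$ by $\eta$.

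\emph{Main obstacle.} The only technical care needed is in (iv) when $M$ has zero off-diagonal entries. In that case $M$ defines only a pseudometric and the diagram may contain diagonal points. I handle this by working directly with the filtered simplex on $\{1,\dots,n\}$, so that stability applies verbatim and points on the diagonal are harmless for $d_B$. I also need to check that the sup norm in \Cref{def:n_invariant} matches the distortion used above, which it does.
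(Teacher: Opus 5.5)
Your proof is correct and follows the paper's argument for (i)--(iii). You use the same bounds $\|S(M)-S(M')\|_\infty\le 2\eta$ and $\|P(M)-P(M')\|_\infty\le 2(m+1)\eta$, the $1$-Lipschitzness of order statistics, and the same correspondence argument for the Gromov--Hausdorff statements (the paper cites an external reference for the constant $4$ in (ii), but runs exactly your argument in (iii)). For (iv) the paper takes your ``alternative'' route, Gromov--Hausdorff stability \cite{ccsg09} plus \Cref{lemma:GH_vs_Kn}. Your primary route via stability of filtrations on the common full simplex over $\{1,\dots,n\}$ is equally valid, and it handles matrices in $\Kn_n$ with zero off-diagonal entries directly, whereas \Cref{lemma:GH_vs_Kn} is stated for genuine $n$-point metric spaces.
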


\begin{proof}
	\begin{enumerate}
		\item For $M,M' \in \Kn_2\simeq \R_{\geq 0}$, we have $|\dist(M)-\dist(M')|=\|M-M'\|_\infty$.
		\item For $M,M'\in\Kn_4$, let $S(M)$ and $S(M')$ be the vectors of the three distance sums in \Cref{inv:hyperbolicity}.
		If $D\coloneq\|M-M'\|_\infty$, then $\|S(M)-S(M')\|_\infty\leq 2D$.
		Taking either the largest or the second-largest component is $1$-Lipschitz with respect to the supremum norm, so
		\begin{align*}
			|\hyp(M)-\hyp(M')|
			&\leq \frac{1}{2}\left(2D+2D\right)
			=2D.
		\end{align*}
		The $4$-Lipschitz bound for $X\mapsto\hyp(X)$ is \cite[Prop.~A.8]{graph-approximations-geodesic-spaces}.

		\item For $M,M'\in\Kn_{2(m+1)}$, let $P(M)$ and $P(M')$ be the vectors of the distance sums in \Cref{inv:higher_hyperbolicity}.
		If $D\coloneq\|M-M'\|_\infty$, then $\|P(M)-P(M')\|_\infty\leq2(m+1)D$.
		Taking either the largest or the second-largest component is $1$-Lipschitz with respect to the supremum norm, so
		\begin{align*}
			|\hyp_m(M)-\hyp_m(M')|
			&\leq \frac{1}{2}\left(2(m+1)D+2(m+1)D\right)
			=2(m+1)D.
		\end{align*}

		Now let $R$ be a correspondence between metric spaces $X$ and $Y$.
		For any $\mathbf{x}\in X^{2(m+1)}$, choose $\mathbf{y}\in Y^{2(m+1)}$ componentwise so that $(x_i,y_i)\in R$ for every $i$.
		The resulting pair of distance matrices differs by at most the distortion $\dis(R)$ in the supremum norm, and therefore
		\begin{align*}
			\hyp_m(\mathbf{x})
			&\leq \hyp_m(\mathbf{y})+2(m+1)\dis(R) \\
			&\leq \hyp_m(Y)+2(m+1)\dis(R).
		\end{align*}
		Taking the supremum over $\mathbf{x}$, exchanging $X$ and $Y$, and then taking the infimum over all correspondences gives
		\begin{align*}
			|\hyp_m(X)-\hyp_m(Y)|
			&\leq 4(m+1)\GH(X,Y).
		\end{align*}

		\item Let $M,M'\in\Kn_n$ and let $X_M,X_{M'}$ be their finite labelled realizations.
		By stability of persistent homology \cite[Thm.~3.1]{ccsg09}\footnote{The cited reference has no factor~$2$ in front of $\GH$ because it uses the Vietoris--Rips filtration with parameter~$2r$.} and \Cref{lemma:GH_vs_Kn},
		\begin{align*}
			d_B\bigl(\dgmVR_{n,k}(M),\dgmVR_{n,k}(M')\bigr)
			&\leq 2\GH(X_M,X_{M'}) 
			\leq \|M-M'\|_\infty.
		\end{align*}
	\end{enumerate}
\end{proof}

\begin{remark}
	\label{rmk:lipschitz-hyperbolicity-constants}
	When $m=1$, the bound in part \textit{(iii)} is not sharp: since $\hyp_1=\hyp$, the sharper bound in part \textit{(ii)} applies.
	The discrepancy arises because, when $m=1$, the higher-hyperbolicity condition is a somewhat inefficient reformulation of the four-point condition due to the additional permutations and repeated distance terms; see \cite[Def.~1.1]{higher-hyperbolicity}.
\end{remark}

\section{Geometry of \texorpdfstring{$\Kn_n(X)$}{K\_n(X)}}
\label{sec:geometry-Kn}

This section establishes the geometric properties of $\Kn_n(X)$ required for the proofs of \Cref{thm:main-sampling,thm:main-sampling-intrinsic} in \Cref{sec:convergence-rates}.
We begin by computing the minimum enclosing radius of $\Kn_n(X)$ and the moments of $\mu_n$, and then bound the dimensions of $\Kn_n(X)$ and $\mu_n$.
Both rest on elementary Lipschitz and diameter estimates for $\Psi_n$, which we establish first.

\begin{lemma}
	\label{lemma:metrics}
	Let $(X, d_X)$ be a metric space. Then for every $x_1, x_2, x_1', x_2' \in X$,
	\begin{enumerate}
		\item\label{it:metric_1} $\lvert d_X(x_1,x_2) - d_X(x_1',x_2') \rvert \leq d_X(x_1,x_1') + d_X(x_2,x_2')$.
		\item\label{it:metric_2} $\lvert d_X(x_1, x_2) - d_X(x_1', x_2') \rvert \leq \diam(X)$.
	\end{enumerate}
\end{lemma}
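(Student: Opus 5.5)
Both parts follow directly from the triangle inequality in $(X,d_X)$ and the definition of the diameter, so I will argue each in a line or two.

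For \ref{it:metric_1}, I will insert the intermediate term $d_X(x_1',x_2)$ and split
\begin{align*}
	\lvert d_X(x_1,x_2) - d_X(x_1',x_2')\rvert
	\leq \lvert d_X(x_1,x_2) - d_X(x_1',x_2)\rvert + \lvert d_X(x_1',x_2) - d_X(x_1',x_2')\rvert .
\end{align*}
Each summand is then bounded by the reverse triangle inequality $\lvert d_X(a,c)-d_X(b,c)\rvert \leq d_X(a,b)$. The first is at most $d_X(x_1,x_1')$, with $c=x_2$. The second is at most $d_X(x_2,x_2')$, with the roles of the arguments exchanged via symmetry of $d_X$. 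The reverse triangle inequality itself is just the triangle inequality applied in both directions.

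For \ref{it:metric_2}, I will use that both $d_X(x_1,x_2)$ and $d_X(x_1',x_2')$ lie in the interval $[0,\diam(X)]$, by the definition of $\diam(X)$ as the supremum of pairwise distances and by nonnegativity of the metric. The difference of two numbers in an interval of length $\diam(X)$ has absolute value at most $\diam(X)$. If $\diam(X)=\infty$, the statement is vacuous.

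There is no real obstacle here. The only point requiring care is choosing the intermediate term in \ref{it:metric_1} so that each step changes exactly one argument. These two estimates are presumably what will later give the Lipschitz bound for $\Psi_n$ and the diameter bound for $\Kn_n(X)$ in the supremum norm.
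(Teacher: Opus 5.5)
Your proposal is correct and matches the paper's proof in substance. The paper does part \textit{(i)} with a single triangle-inequality chain, $d_X(x_1,x_2)\le d_X(x_1,x_1')+d_X(x_1',x_2')+d_X(x_2',x_2)$, followed by symmetry, rather than your intermediate term $d_X(x_1',x_2)$; this is the same idea. Part \textit{(ii)} is identical, and when $\diam(X)=\infty$ the bound is trivially true rather than vacuous.
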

\begin{proof}
	By the triangle inequality,
	\begin{equation*}
		d_X(x_1,x_2) - d_X(x_1',x_2') \leq d_X(x_1,x_1') + d_X(x_1',x_2') + d_X(x_2',x_2) - d_X(x_1',x_2') = d_X(x_1,x_1') + d_X(x_2,x_2').
	\end{equation*}
	The symmetric argument yields~\ref{it:metric_1}.
	On the other hand, $d_X(x_1,x_2) - d_X(x_1',x_2') \leq \diam(X) - 0 = \diam(X)$, and~\ref{it:metric_2} follows by symmetry once again.
\end{proof}

Since we embed $\Kn_n$ in either $\R^{n \times n}$ or $\R^{\binom{n}{2}}$, we first record the relationship between the metrics on these two spaces.
\begin{lemma}
	\label{lemma:metrics_Kn}
	Fix $1 \leq p \leq \infty$ and $n \geq 2$, and let $d \coloneq \binom{n}{2}$. Let $\|\cdot\|_{\ell^p(\R^m)}$ be the $\ell^p$ norm on $\R^m$. Then for every $M, M' \in \Kn_n$,
	\begin{equation*}
		\|M - M'\|_{\ell^p(\R^d)} = 2^{-1/p} \|M - M'\|_{\ell^p(\R^{n \times n})}.
	\end{equation*}
\end{lemma}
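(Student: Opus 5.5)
The plan is to compare the two sums entry by entry, using the symmetry and zero diagonal of matrices in $\Kn_n$. The embedding $\Kn_n \hookrightarrow \R^d$ of \Cref{rmk:Kn_smaller_embedding} records, for each $M$, the entries $M_{ij}$ with $i<j$. For $M, M' \in \Kn_n$, the difference $A \coloneq M - M'$ is again symmetric with zero diagonal. Hence the $n\times n$ entries of $A$ split into three groups:
\begin{itemize}
	\item the diagonal, which contributes nothing;
	\item the strictly upper triangular entries, which are exactly the coordinates of $A$ in $\R^d$;
	\item the strictly lower triangular entries, which duplicate the upper ones since $A_{ji}=A_{ij}$.
\end{itemize}

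For $1 \leq p < \infty$ I would write
\begin{align*}
	\|A\|_{\ell^p(\R^{n\times n})}^p = \sum_{i,j} |A_{ij}|^p = \sum_{i<j}|A_{ij}|^p + \sum_{i>j}|A_{ij}|^p + \sum_i |A_{ii}|^p = 2\sum_{i<j}|A_{ij}|^p = 2\,\|A\|_{\ell^p(\R^d)}^p .
\end{align*}
Taking $p$-th roots gives $\|A\|_{\ell^p(\R^d)} = 2^{-1/p}\|A\|_{\ell^p(\R^{n\times n})}$, which is the claim.

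For $p=\infty$, with the convention $2^{-1/\infty}=1$, the maximum over all entries equals the maximum over the upper triangular entries by the same symmetry. So the two norms agree.

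The only point needing care, and it is minor, is this $p=\infty$ case: the formula must be read with $2^{-1/\infty}=1$.
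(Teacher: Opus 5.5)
Your proof is correct and follows essentially the same route as the paper, which also uses symmetry and the zero diagonal to write the sum over $i<j$ as half the sum over $i\neq j$. The only difference is the case $p=\infty$: the paper obtains it by letting $p\to\infty$, while you argue directly with the maximum and the convention $2^{-1/\infty}=1$, which is equally valid and slightly more transparent.
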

\begin{proof}
	If $p < \infty$,
	\begin{align*}
		\|M - M'\|_{\ell^p(\R^d)}
		&= \left( \sum_{1 \leq i < j \leq n} |d_X(x_i, x_j) - d_X(x_i', x_j')|^p \right)^{1/p}
		= \left( \frac{1}{2} \sum_{1 \leq i \neq j \leq n} |d_X(x_i, x_j) - d_X(x_i', x_j')|^p \right)^{1/p} \\
		&= 2^{-1/p} \|M - M'\|_{\ell^p(\R^{n \times n})}.
	\end{align*}
	The result for $p=\infty$ follows by letting $p \to \infty$.
\end{proof}

The next Proposition gives control over the curvature map $\Psi_n$ and the diameter of $\Kn_n(X)$.

\begin{prop}
	\label{prop:props_Kn}
	Let $(X, d_X)$ be a metric space.
	Fix $n \geq 2$ and $1 \leq p, q \leq \infty$.
	Let $d \coloneq \binom{n}{2}$.
	Equip $X^n$ with the $\ell^p$ product metric and $\Kn_n(X)$ with the $\ell^q$ metric coming from $\R^{d}$.
	Then:
	\begin{enumerate}
			\item\label{it:prop_Kn_Psi} $\Psi_n \colon X^n \to \Kn_n(X) \subset \R^d$ is Lipschitz continuous with constant $2^{1-1/q}(n-1)^{1/q}$ if $p \leq q$ and $2^{1-1/q}(n-1)^{1/q} n^{1/q-1/p}$ if $p > q$.
			In particular, the Lipschitz constant is 2 when $p = q = \infty$.
		\item\label{it:prop_Kn_diam} $\diam_q(\Kn_n(X)) \leq d^{1/q} \diam(X)$.
			In particular, $\diam_\infty(\Kn_n(X)) = \diam(X)$.
			Additionally, $\Kn_n(X)$ is unbounded if $X$ is.
		\item\label{it:prop_Kn_compact} If $X$ is compact, so is $\Kn_n(X)$.
	\end{enumerate}
\end{prop}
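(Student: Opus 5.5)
For \ref{it:prop_Kn_Psi}, I will estimate entry by entry using \Cref{lemma:metrics}\ref{it:metric_1}. For $\mathbf{x},\mathbf{x}'\in X^n$ write $a_i\coloneq d_X(x_i,x_i')$. Each coordinate of $\Psi_n(\mathbf{x})-\Psi_n(\mathbf{x}')$ in $\R^d$ is then bounded by $a_i+a_j$ for some $i<j$.

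When $q=\infty$ this gives $\|\Psi_n(\mathbf{x})-\Psi_n(\mathbf{x}')\|_\infty\le 2\max_i a_i\le 2\|a\|_p$, which is the claimed constant $2$ and does not depend on $p$.

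When $q<\infty$ I will use convexity: $(a_i+a_j)^q\le 2^{q-1}(a_i^q+a_j^q)$. Summing over $i<j$, each index occurs in exactly $n-1$ pairs, so
\[
\sum_{i<j}(a_i+a_j)^q\le 2^{q-1}(n-1)\|a\|_q^q .
\]
This gives the factor $2^{1-1/q}(n-1)^{1/q}\|a\|_q$. To finish I need to compare $\|a\|_q$ with $\|a\|_p$ on $\R^n$. If $p\le q$, then $\|a\|_q\le\|a\|_p$. If $p>q$, Hölder's inequality gives $\|a\|_q\le n^{1/q-1/p}\|a\|_p$. These two cases produce exactly the two stated constants.

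For \ref{it:prop_Kn_diam}, every coordinate difference between two points of $\Kn_n(X)$ is a difference of two distances. By \Cref{lemma:metrics}\ref{it:metric_2} it is at most $\diam(X)$ in absolute value. Summing over the $d$ coordinates gives $\diam_q(\Kn_n(X))\le d^{1/q}\diam(X)$.

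For the equality when $q=\infty$, I will use that $\Kn_n(X)$ contains the zero matrix $\Psi_n(x,\dots,x)$. It also contains $\Psi_n(x,y,\dots,y)$, whose sup norm is $d_X(x,y)$. Taking the supremum over $x,y$ gives $\diam_\infty\ge\diam(X)$. The same pair of configurations shows that $\Kn_n(X)$ is unbounded when $X$ is.

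For \ref{it:prop_Kn_compact}, $\Psi_n$ is continuous by \ref{it:prop_Kn_Psi}, and $X^n$ is compact. Hence $\Kn_n(X)=\Psi_n(X^n)$ is compact. All steps are routine; the only point needing care is the norm comparison in the case $p>q$.
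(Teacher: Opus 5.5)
Your proposal is correct and follows the paper's proof essentially step by step. It uses the entrywise bound from \Cref{lemma:metrics}, the $\ell^q$-versus-$\ell^p$ comparison on $\R^n$, the same constant tuple and $(x,y,\dots,y)$ configurations for $\diam_\infty$ and unboundedness, and the continuous-image argument for compactness. The only cosmetic difference is in part \textit{(i)}: you apply convexity, $(a_i+a_j)^q\le 2^{q-1}(a_i^q+a_j^q)$, directly on the $d$ coordinates, whereas the paper applies Minkowski's inequality over all off-diagonal matrix entries and then rescales by $2^{-1/q}$ via \Cref{lemma:metrics_Kn}. Both routes give the same constant $2^{1-1/q}(n-1)^{1/q}$.
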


\begin{proof}
	Let $\mathbf{x}, \mathbf{x}' \in X^n$ and $M = \Psi_n(\mathbf{x}), M' = \Psi_n(\mathbf{x}') \in \Kn_n(X)$.

	\begin{enumerate}
		\item When $q<\infty$, \Cref{lemma:metrics}\ref{it:metric_1} and the Minkowski inequality yield
		\begin{align*}
			\|M - M'\|_{\ell^q(\R^{n \times n})}
			&= \left( \sum_{1 \leq i \neq j \leq n} |d_X(x_i, x_j) - d_X(x_i', x_j')|^q \right)^{1/q}\\
			&\leq \left( \sum_{1 \leq i \neq j \leq n} \left( d_X(x_i, x_i') + d_X(x_j', x_j) \right)^q \right)^{1/q} \\
			&\leq \left( \sum_{1 \leq i \neq j \leq n} d_X(x_i, x_i')^q \right)^{1/q} + \left( \sum_{1 \leq i \neq j \leq n} d_X(x_j, x_j')^q \right)^{1/q} \\
			&= \left( (n-1) \sum_{1 \leq i \leq n} d_X(x_i, x_i')^q \right)^{1/q} + \left( (n-1) \sum_{1 \leq j \leq n} d_X(x_j, x_j')^q \right)^{1/q} \\
			&= 2(n-1)^{1/q} d_{X,q}(\mathbf{x}, \mathbf{x}').
		\end{align*}
		Together with \Cref{lemma:metrics_Kn}, we obtain $\|M - M'\|_{\ell^q(\R^d)} = 2^{-1/q} \|M - M'\|_{\ell^q(\R^{n \times n})} \leq 2^{1-1/q} (n-1)^{1/q} d_{X,q}(\mathbf{x}, \mathbf{x}')$.
		The result follows from the basic inequalities $d_{X,q} \leq d_{X,p}$ if $p \leq q$ and $d_{X,q} \leq n^{1/q-1/p} \cdot d_{X,p}$ if $p > q$. Once again, the $q=\infty$ case follows by taking a limit.

		\item When $q<\infty$, \Cref{lemma:metrics}\ref{it:metric_2} gives
		\begin{equation*}
				\begin{multlined}[0.7\displaywidth]
				\|M - M'\|_{\ell^q(\R^d)}
				= \left( \sum_{1 \leq i < j \leq n} |d_X(x_i, x_j) - d_X(x_i', x_j')|^q \right)^{1/q} \\
				\leq \left( \sum_{1 \leq i < j \leq n} \diam(X)^q \right)^{1/q}
				= d^{1/q} \diam(X).
			\end{multlined}
		\end{equation*}
		Thus, $\diam_q(\Kn_n(X)) \leq d^{1/q} \diam(X)$.
		On the other hand, choose a sequence $\{(x_k, x_k')\}_{k=0}^\infty \subset X^2$ with $x_0 = x_0'$ such that $d_X(x_k, x_k') \to \diam(X)$ as $k \to \infty$.
		Let $M_k \coloneq \Psi_n(x_k', x_k, \dots, x_k) \in \Kn_n(X)$.
		Then
		\begin{equation*}
			\|M_k - M_0\|_{\ell^q(\R^d)} = (n-1)^{1/q} d_X(x_k, x_k') \to (n-1)^{1/q} \diam(X)
		\end{equation*}
		as $k \to \infty$. Hence, $\diam(\Kn_n(X)) = \infty$ if $\diam(X) = \infty$. If $q=\infty$, letting $q \to \infty$ in the above gives $\diam_\infty(\Kn_n(X)) \leq \diam(X)$, $\|M_k-M_0\|_{\ell^\infty(\R^d)} \to \diam(X)$ and, thus, $\diam_\infty(\Kn_n(X)) = \diam(X)$.

		\item Regardless of the values of $p$ and $q$, if $X$ is compact, then $\Kn_n(X) = \Psi_n(X^n)$ is the continuous image of a compact set, hence compact.
	\end{enumerate}

\end{proof}

We now apply \Cref{prop:moment_radius} to the curvature set $\Kn_n(X)$ and the curvature measure $\mu_n$.
The results of~\cite{Fournier2023} leave room to choose the centre $\mathbf{p}_0$ in $\cM_q^{(m)}(\mu_n, \mathbf{p}_0)$, so we will use the symmetry of $\Kn_n(X)$ and \Cref{prop:props_Kn} to minimize $\cM_q^{(m)}(\mu_n, \mathbf{p}_0)$.

\begin{prop}
	\label{prop:props_Kn_2}
	Let $(X, d_X, \mu_X)$ be an mm-space with fully supported $\mu_X$. Fix $n \geq 2$ and let $d \coloneq \binom{n}{2}$. Let $1 \leq q < \infty$ and $1 \leq m \leq \infty$. Let $\mu_n$ be the associated curvature measure.
	\begin{enumerate}
		\item\label{it:measure_radius} If $m < \infty$, the minimum enclosing radius of $\Kn_n(X)$ in $(\R^d, \ell^m)$ is at most $d^{1/m} \diam(X)/2$. If $m = \infty$, $R_\infty(\Kn_n(X)) = \diam(X)/2$.
		\item\label{it:measure_moment} $\cM_q^{(m)}(\mu_n)^{1/q} \leq d^{1/m} \diam(X)/2$ and $\sup_{q \geq 1} \cM_q^{(\infty)}(\mu_n)^{1/q} = \diam(X)/2$. In particular, if $\diam(X) < \infty$, then $\mu_n$ has finite moments of all orders.
	\end{enumerate}
\end{prop}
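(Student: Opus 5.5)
The natural centre to use is the constant matrix $\mathbf{p}_0 \in \R^d$ with every entry equal to $\diam(X)/2$. For every $M = \Psi_n(\mathbf{x}) \in \Kn_n(X)$, each off-diagonal entry satisfies $0 \leq d_X(x_i,x_j) \leq \diam(X)$. Hence every coordinate of $M - \mathbf{p}_0$ has absolute value at most $\diam(X)/2$, which gives
\begin{align*}
	\|M - \mathbf{p}_0\|_m \leq d^{1/m}\diam(X)/2 \quad (m<\infty), \qquad \|M-\mathbf{p}_0\|_\infty \leq \diam(X)/2.
\end{align*}
By the alternative description $R_m(\Kn_n(X)) = \inf_{\mathbf{p}_0}\sup_{M}\|M-\mathbf{p}_0\|_m$, this yields the upper bounds in \ref{it:measure_radius}.

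For the matching lower bound when $m=\infty$, I use \Cref{prop:props_Kn}\ref{it:prop_Kn_diam}, which gives $\diam_\infty(\Kn_n(X)) = \diam(X)$. If some centre $\mathbf{p}_0$ had $\sup_M \|M-\mathbf{p}_0\|_\infty \leq R$, the triangle inequality would give $\diam_\infty(\Kn_n(X)) \leq 2R$. Thus $R \geq \diam(X)/2$, and so $R_\infty(\Kn_n(X)) = \diam(X)/2$. If $\diam(X)=\infty$, both sides are infinite by the unboundedness statement in \Cref{prop:props_Kn}. No full-support assumption is needed for this part.

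For \ref{it:measure_moment}, I need to know that $\mu_n$ is supported on (the closure of) $\Kn_n(X)$.
\begin{itemize}
	\item Since $\mu_n = (\Psi_n)_\#\mu_X^{\otimes n}$, it is concentrated on $\Kn_n(X)$. Because $\mu_X$ is fully supported and $\Psi_n$ is continuous (\Cref{prop:props_Kn}\ref{it:prop_Kn_Psi}), $\supp\mu_n = \overline{\Kn_n(X)}$.
	\item Closure does not change enclosing radii, since balls are closed.
\end{itemize}
With this in hand, I bound the moment at the constant centre pointwise:
\begin{align*}
	\cM_q^{(m)}(\mu_n)^{1/q} \leq \cM_q^{(m)}(\mu_n,\mathbf{p}_0)^{1/q} \leq \sup_{M\in\Kn_n(X)}\|M-\mathbf{p}_0\|_m \leq d^{1/m}\diam(X)/2.
\end{align*}
The second claim of \ref{it:measure_moment} follows from \Cref{prop:moment_radius} applied with support $\overline{\Kn_n(X)}$. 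That proposition gives $\sup_{q\ge1}\cM_q^{(\infty)}(\mu_n)^{1/q} = R_\infty(\overline{\Kn_n(X)}) = R_\infty(\Kn_n(X)) = \diam(X)/2$ by part \ref{it:measure_radius}. Finiteness of all moments when $\diam(X)<\infty$ is then immediate.

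The main point of care is this support identification. \Cref{prop:moment_radius} is phrased for a measure "supported on $X$", and the equality $R_m = \cM_\infty$ needs the support to be the whole set rather than a proper subset. This is exactly where full support of $\mu_X$ enters. Every open set $U$ meeting $\Kn_n(X)$ has preimage $\Psi_n^{-1}(U)$, which is open and nonempty in $X^n$. Since $\mu_X^{\otimes n}$ is fully supported on $X^n$, this preimage has positive measure, so $\mu_n(U) > 0$.
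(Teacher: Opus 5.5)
Your proof is correct and follows essentially the same route as the paper. You use the constant centre with all entries $\diam(X)/2$ for the upper bounds, the triangle inequality with $\diam_\infty(\Kn_n(X))=\diam(X)$ for the matching lower bound when $m=\infty$, and \Cref{prop:moment_radius} for part (ii). Your explicit check that $\supp\mu_n=\overline{\Kn_n(X)}$, via full support of $\mu_X$ and continuity of $\Psi_n$, is a worthwhile addition, since the paper uses it only implicitly when invoking \Cref{prop:moment_radius}.
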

\begin{proof}
	If $\diam(X) = \infty$, the Proposition holds immediately because $\diam_p(\Kn_n(X)) = \infty$ by \Cref{prop:props_Kn}\ref{it:prop_Kn_diam} and, thus, $R_p(\Kn_n(X)) = \infty$ for all $p$. Hence, assume $\diam_\infty(\Kn_n(X)) = \diam(X) < \infty$, and let $D_0 \in \R^d$ be the vector with all components equal to $\diam(X)/2$.
	\begin{enumerate}
		\item For all $\mathbf{x} \in X^n$,
		\begin{equation*}
			\label{eq:MER}
			\| \Psi_n(\mathbf{x}) - D_0\|_{\ell^m(\R^d)}^m
			= \sum_{1 \leq i < j \leq n} |d_X(x_i, x_j) - \diam(X)/2|^m
			\leq \sum_{1 \leq i < j \leq n} (\diam(X)/2)^m
			= d (\diam(X)/2)^m.
		\end{equation*}
		Equality holds if $x_i = x_j$ for all $1 \leq i < j \leq n$. Hence, $R_m(\Kn_n(X)) \leq d^{1/m} \diam(X)/2$.

		If $m = \infty$, $R_\infty(\Kn_n(X)) \leq \diam(X)/2$ follows analogously.
		On the other hand, let $M_0 \in \R^d$ such that $\|M - M_0\|_{\infty} \leq R_\infty(\Kn_n(X))$ for all $M \in \Kn_n(X)$. Given $\epsilon > 0$, choose $M, M' \in \Kn_n(X)$ such that $\diam(X) - \epsilon \leq \|M-M'\|_\infty$.
		By the triangle inequality,
		\begin{equation*}
			\diam(X) - \epsilon \leq \|M - M'\|_\infty \leq \|M - M_0\|_\infty + \|M_0 - M'\|_\infty \leq 2 R_\infty(\Kn_n(X)).
		\end{equation*}
		Letting $\epsilon \to 0$ yields $\diam(X)/2 \leq R_\infty(\Kn_n(X))$.
		This proves~\ref{it:measure_radius}.

		\item This is a direct consequence of \Cref{prop:moment_radius} and \Cref{it:measure_radius}.
	\end{enumerate}
\end{proof}

Now we deal with the intrinsic dimension of $\Kn_n(X)$. First, we prove that the product of Ahlfors regular spaces is also Ahlfors regular.
\begin{lemma}
	\label{lemma:ahlfors-product}
	Let $(X, d_X, \mu_X)$ and $(Y, d_Y, \mu_Y)$ be Ahlfors regular of dimensions $s$ and $t$, respectively. Let $d_\infty$ be the metric $d_\infty((x,y), (x',y')) = \max(d_X(x,x'), d_Y(y,y'))$. Then $(X \times Y, d_\infty, \mu_X \otimes \mu_Y)$ is Ahlfors $(s+t)$-regular.
\end{lemma}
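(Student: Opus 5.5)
The plan is to use the fact that balls in the $\ell^\infty$ product metric are products of balls. For $(x_0,y_0)\in X\times Y$ and $r>0$,
\begin{align*}
	B_{X\times Y}\bigl((x_0,y_0),r\bigr) = B_X(x_0,r)\times B_Y(y_0,r),
\end{align*}
so that
\begin{align*}
	(\mu_X\otimes\mu_Y)\bigl(B_{X\times Y}((x_0,y_0),r)\bigr)=\mu_X(B_X(x_0,r))\,\mu_Y(B_Y(y_0,r)).
\end{align*}
We will also check that $(X\times Y,d_\infty)$ is Polish, since products of Polish spaces are Polish, and that $\mu_X\otimes\mu_Y$ is a Borel probability measure there. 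Finally we note that $\diam(X\times Y)=\max\{\diam(X),\diam(Y)\}$.

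First consider radii $0<r<\min\{\diam(X),\diam(Y)\}$. Here both Ahlfors estimates \eqref{eq:ahlfors-regular} apply. Multiplying them gives
\begin{align*}
	\frac{1}{C_XC_Y}r^{s+t}\le \mu(B)\le C_XC_Y r^{s+t},
\end{align*}
where $\mu\coloneq\mu_X\otimes\mu_Y$ and $B$ is the ball above. This is the core estimate.

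The main obstacle is the remaining range $\min\{\diam(X),\diam(Y)\}\le r<\diam(X\times Y)$, where only one factor's estimate is available. Say $\diam(Y)\le r<\diam(X)$. Then $B_Y(y_0,r)$ is essentially all of $Y$, and its measure lies in $[\mu_Y(B_Y(y_0,r')),1]$ for every $r'<\diam(Y)$. Letting $r'\uparrow\diam(Y)$ gives the lower bound $\mu_Y(B_Y(y_0,r))\ge \diam(Y)^t/C_Y$, and the upper bound is simply $1$.

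Since $r$ now ranges over a bounded interval $[\diam(Y),\diam(X))$ bounded away from $0$, the quantities $r^s$ and $r^{s+t}$ are comparable up to constants depending on $\diam(X)$ and $\diam(Y)$. Concretely:
\begin{itemize}
	\item For the upper bound, use $r^s\le r^{s+t}/\diam(Y)^t$.
	\item For the lower bound, use $r^{s+t}\le r^s\diam(X)^t$.
\end{itemize}
Absorbing these into a new constant $C_{X\times Y}$ gives the required two-sided bound on this range too.

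We must also handle degenerate cases, which we treat separately with the convention that the bounds are vacuous or adjusted:
\begin{itemize}
	\item If a diameter is zero, the space is a point, and Ahlfors $s$-regularity with $s>0$ fails or forces $s=0$.
	\item If a diameter is infinite, Ahlfors regularity is impossible for probability measures unless that exponent is $0$ (compare \Cref{lemma:ahlfors-compact}).
\end{itemize}
Taking $C_{X\times Y}$ to be the maximum of the constants from the two ranges completes the proof.
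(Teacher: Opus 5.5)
Your approach is the paper's: balls for $d_\infty$ are products $B_X(x_0,r)\times B_Y(y_0,r)$, so the two Ahlfors estimates multiply. The paper's proof stops there, but only because it assumes the factor estimates for \emph{all} $r>0$. For a probability measure with $s>0$ that cannot hold, since the lower bound $r^s/C_X$ exceeds $1$ for large $r$.

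Your extra step for the intermediate range $\diam(Y)\le r<\diam(X)$ is correct, and it is what is actually needed. The definition only asks for $0<r<\diam(X\times Y)=\max\{\diam(X),\diam(Y)\}$, and your step covers the part of that range beyond $\diam(Y)$:
\begin{itemize}
\item the lower bound $\mu_Y(B_Y(y_0,r))\ge \diam(Y)^t/C_Y$, obtained by letting $r'\uparrow\diam(Y)$;
\item the trivial upper bound $1$;
\item trading $r^s$ for $r^{s+t}$ using $\diam(Y)\le r<\diam(X)$.
\end{itemize}
In the paper's only application, $X^n$ built from copies of $X$, both factors have diameter $\diam(X)$, so this range is empty there.

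The one incorrect point is your treatment of zero diameter. With the definition used here, the bounds are required only for $0<r<\diam$, so a one-point space satisfies them vacuously for \emph{every} exponent. Zero diameter therefore does not force the exponent to be $0$.

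In that case the lemma is actually false. Take $[0,1]$ with Lebesgue measure, which is Ahlfors $1$-regular, and a point $\{y_0\}$, which is ``Ahlfors $t$-regular'' for any $t>0$. Their product is isometric to $[0,1]$ with the same measure, and that is not Ahlfors $(1+t)$-regular. So this case cannot be handled; it must be excluded by assuming $\diam(X),\diam(Y)>0$. Your constants $(\diam(X)/\diam(Y))^t$ and $\diam(Y)^{-t}$ require this anyway.

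Positive diameter is also what guarantees $t\ge 0$, which your inequality $r^t\ge\diam(Y)^t$ silently uses. If $t<0$, the lower bound $r^t/C_Y\le 1$ fails as $r\to 0$.

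Infinite diameter needs no exponent caveat: \Cref{lemma:ahlfors-compact} shows that an Ahlfors regular mm-space is compact for any exponent, so that case never occurs.
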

\begin{proof}
	Suppose that for every $x_0 \in X$, $y_0 \in Y$, and $r>0$,
	\begin{align*}
		\frac{1}{C_X}r^{s} \leq \mu_X(B_X(x_0,r)) \leq C_X r^{s}\\
		\frac{1}{C_Y}r^{t} \leq \mu_Y(B_Y(y_0,r)) \leq C_Y r^{t}.
	\end{align*}
	Since $B_{X \times Y}((x_0, y_0), r) = B_X(x_0, r) \times B_Y(y_0,r)$, we get
	\begin{equation*}
		\frac{1}{C_X C_Y} \cdot r^{s+t} \leq (\mu_X \otimes \mu_Y)\big( B_{X \times Y}((x_0, y_0), r) \big) \leq (C_X C_Y) \cdot r^{s+t}.
	\end{equation*}
\end{proof}

Unfortunately, the Ahlfors condition does not descend immediately from $X^n$ to $\Kn_n(X)$. Still, we can use comparisons from \cite{WeedBach2019} to provide an upper bound for the dimensions of $\Kn_n(X)$.
\begin{prop}
	\label{prop:ahlfors-regularity}
	Suppose that $(X, d_X, \mu_X)$ is an Ahlfors $s$-regular mm-space and fix $1 \leq m \leq \infty$. Equip $X^n$ and $\Kn_n(X)$ with the metrics $d_{X,m}$ and $\ell^m$, respectively. Then
	\begin{enumerate}
		\item\label{it:ahlfors_X^n} $d_H(X^n) = d_M(X^n) = ns$, and $d_p^*(\mu_X^{\otimes n}) = ns$ when $1 \leq p \leq ns/2$.
		\item\label{it:ahlfors_Kn} $d_p^*(\mu_n) \leq d_M(\Kn_n(X)) \leq ns$ when $1 \leq p \leq \frac{1}{2}d_M(\Kn_n(X))$.
	\end{enumerate}
\end{prop}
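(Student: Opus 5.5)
For part \ref{it:ahlfors_X^n}, I would first settle which product metric to use. The metrics $d_{X,m}$ and $d_{X,\infty}$ on $X^n$ are bi-Lipschitz equivalent, since $d_{X,\infty}\leq d_{X,m}\leq n^{1/m}d_{X,\infty}$. So it suffices to work with the max metric $d_{X,\infty}$ and then transfer the conclusions.

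Iterating \Cref{lemma:ahlfors-product} $n-1$ times shows that $(X^n,d_{X,\infty},\mu_X^{\otimes n})$ is Ahlfors $ns$-regular. One wrinkle: that lemma was stated for all $r>0$, whereas \Cref{def:ahlfors-regular} only uses $r<\diam(X)$. Since $\diam_\infty(X^n)=\diam(X)$, the radius ranges match and this causes no problem.

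Ahlfors regularity is preserved under bi-Lipschitz change of metric, with adjusted constants. A ball in $d_{X,m}$ of radius $r$ is sandwiched between $d_{X,\infty}$-balls of radii $n^{-1/m}r$ and $r$. The only care needed is at radii close to the diameter, where the lower bound can be handled by monotonicity, absorbing a constant.

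Hence $(X^n,d_{X,m},\mu_X^{\otimes n})$ is Ahlfors $ns$-regular, and it is still a Polish space. Applying \Cref{lemma:ahlfors-dimensions} then gives $d_H(X^n)=d_M(X^n)=ns$, together with $d_p^*(\mu_X^{\otimes n})=ns$ for $1\leq p\leq ns/2$.

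For part \ref{it:ahlfors_Kn}, there are two inequalities. The bound $d_M(\Kn_n(X))\leq ns$ follows from \Cref{prop:props_Kn}\ref{it:prop_Kn_Psi}: the map $\Psi_n\colon(X^n,d_{X,m})\to(\Kn_n(X),\ell^m)$ is Lipschitz with some constant $L$. It is also surjective, so it sends an $\epsilon$-cover of $X^n$ to an $L\epsilon$-cover of $\Kn_n(X)$. Covering numbers therefore satisfy $N_{L\epsilon}(\Kn_n(X))\leq N_\epsilon(X^n)$, and taking $\limsup\log N/\log(1/\epsilon)$ gives $d_M(\Kn_n(X))\leq d_M(X^n)=ns$.

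For the other inequality, $d_p^*(\mu_n)\leq d_M(\Kn_n(X))$, I would invoke the comparison in \cite{WeedBach2019}, which bounds the upper Wasserstein dimension by the Minkowski dimension of the measure: $d_p^*(\mu)\leq d_M(\mu)$ whenever $p\leq d_M(\mu)/2$. As recalled in \Cref{subsec:prelim-moments}, on a Polish space one has $d_M(\mu_n)=d_M(\supp\mu_n)$. Here $X$ is compact by \Cref{lemma:ahlfors-compact}, so $\Kn_n(X)$ is compact by \Cref{prop:props_Kn}\ref{it:prop_Kn_compact}. Moreover $\mu_X^{\otimes n}$ is fully supported and $\Psi_n$ is continuous and surjective, so $\supp\mu_n=\overline{\Psi_n(X^n)}=\Kn_n(X)$. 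This identification of the support is what lets the hypothesis $p\leq\frac12 d_M(\Kn_n(X))$ match the hypothesis of the Weed–Bach result.

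The main obstacle I expect is matching the exact form and hypotheses of the Weed–Bach comparison: whether they state it with the range restriction $p\leq d_M/2$, and whether they require diameter at most $1$. If they do, I would rescale the metric, which does not affect any of these dimensions. A secondary point needing care is the near-diameter regime in the bi-Lipschitz transfer of Ahlfors regularity. If that becomes awkward, I would bypass it and compute the covering numbers of $X^n$ directly from those of $X$ under the max metric.
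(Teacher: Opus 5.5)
Your proposal is correct and follows the paper's proof essentially step for step. For (i), you use Ahlfors $ns$-regularity of $(X^n,d_{X,\infty},\mu_X^{\otimes n})$ via \Cref{lemma:ahlfors-product}, then bi-Lipschitz transfer to $d_{X,m}$, then \Cref{lemma:ahlfors-dimensions}. For (ii), you use full support of $\mu_n$ on $\Kn_n(X)$ together with \cite[Prop.~2]{WeedBach2019}, and the Lipschitz-image bound on Minkowski dimension. You prove that last bound directly by covering numbers, where the paper cites \cite[Lem.~3.3(iv)]{Dimensions-Embeddings-Attractors}.

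One small correction to your side remark on the bi-Lipschitz transfer. It is the upper Ahlfors bound that needs the trivial estimate $\mu_X^{\otimes n}(B)\leq 1\leq (r/\diam(X))^{ns}$ for $\diam(X)\leq r<n^{1/m}\diam(X)$. The lower bound needs no care: it follows directly from $B_{d_{X,\infty}}(\mathbf{x},n^{-1/m}r)\subseteq B_{d_{X,m}}(\mathbf{x},r)$ for every $r<n^{1/m}\diam(X)$.
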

\begin{proof}
	\hfill
	\begin{enumerate}
		\item By \Cref{lemma:ahlfors-product}, $(X^n, d_{X,\infty}, \mu_X^{\otimes n})$ is Ahlfors $(ns)$-regular. Since $d_{X,m}$ and $d_{X,\infty}$ are bi-Lipschitz equivalent, $(X^n, d_{X,m}, \mu_X^{\otimes n})$ is also Ahlfors $(ns)$-regular. The conclusion follows by \Cref{lemma:ahlfors-dimensions}.
		\item Since $\mu_X$ is fully supported by \Cref{lemma:ahlfors-compact}, so is $\mu_X^{\otimes n}$, and by continuity of $\Psi_n$ so is $\mu_n$ on $\Kn_n(X)$.
		Hence $d_M(\mu_n) = d_M(\Kn_n(X))$, and \cite[Prop.~2]{WeedBach2019} yields $d_p^*(\mu_n) \leq d_M(\Kn_n(X))$ when $p \leq \frac{1}{2} d_M(\Kn_n(X))$.
		Moreover, by \Cref{prop:props_Kn}\ref{it:prop_Kn_Psi}, $\Psi_n:(X^n, d_{X,m}) \to (\Kn_n(X), \ell^m)$ is Lipschitz continuous, so by \cite[Lem.~3.3(iv)]{Dimensions-Embeddings-Attractors} and \ref{it:ahlfors_X^n}, $d_M(\Kn_n(X)) \leq d_M(X^n) = ns$.
	\end{enumerate}
\end{proof}

\begin{remark}
	\label{rmk:ahlfors_dim_Kn_S1}
	Note that $d_M(\Kn_n(X)) \leq ns$ in \Cref{prop:ahlfors-regularity} is not an equality in general. For example, \cite{curvature-sets-S1} proved that $\Kn_n(\Sp^1)$ is the geometric realization of a simplicial complex whose maximal simplices all have dimension $n-1$. Then $d_H(\Kn_n(\Sp^1)) = d_M(\Kn_n(\Sp^1)) = n-1$ because $\Kn_n(\Sp^1)$ is a finite union of simplices $\Delta_{n-1} \subset \R^d$ (with either $d=n \times n$ or $d = \binom{n}{2}$), all of whom have $d_H(\Delta_{n-1}) = d_M(\Delta_{n-1}) = n-1$. On the other hand, the uniform measure $\mu_{\Sp^1}$ satisfies
	\begin{equation*}
		\mu_{\Sp^1}(B_{\Sp^1}(x_0, r)) = r/\pi
	\end{equation*}
	for $0 < r \leq \pi$, so $\mu_{\Sp^1}$ is Ahlfors 1-regular. Hence, $d_M((\Sp^1)^n) = n > d_M(\Kn_n(\Sp^1))$.
\end{remark}

\section{Wasserstein convergence of empirical curvature measures}
\label{sec:convergence-rates}

In this section we prove our main results \labelcref{thm:main-sampling}--\labelcref{thm:main-sampling-intrinsic}. The main tool for \Cref{thm:main-sampling} is the Wasserstein convergence of empirical measures on Euclidean space, established by Fournier--Guillin \cite{convergence_empirical_fournier}, in the form of \cite{Fournier2023}, which provides an explicit constant.
We apply their results to $\mu_n$, bounding its moment term by the diameter of $X$ with the results of \Cref{sec:geometry-Kn}.
Lipschitz $n$-invariants inherit this bound (\Cref{cor:main-sampling-invariant}), and \Cref{lemma:empirical_sample_size} turns it into sample sizes (\Cref{cor:main-sampling-invariant-bounds}).
We improve the convergence rate for Ahlfors regular spaces in \Cref{thm:main-sampling-intrinsic}, by combining the estimates of \cite{WeedBach2019} with the dimension bounds of \Cref{sec:geometry-Kn}.

The following are \Cref{thm:main-sampling} and \Cref{cor:main-sampling-invariant}.
\begin{theorem}
	\label{thm:empirical_Kn}
	Fix a real number $1 \leq p < \infty$ and integers $N \geq 1$, $n \geq 2$.
	Let $d \coloneq \binom{n}{2}$ and assume $p\neq d/2$.
	Let $(X, d_X, \mu_X)$ be a bounded mm-space with fully supported probability measure $\mu_X$.
	Embed $\Kn_n(X)$ in $(\R^d, \| \cdot \|_\infty)$.
	Then
	\begin{equation*}
		\bbE[\W_p(\empirical{\mu}{n}{N}, \mu_n)^p] \leq C(d,p) \cdot \diam(X)^p \cdot N^{-e_{d,p}},
		\qquad
		e_{d,p} \coloneq
		\begin{cases}
			1/2, & d < 2p\\
			p/d, & d > 2p,
		\end{cases}
	\end{equation*}
	where $C(d,p) \coloneq \dfrac{2^{d/2-1}}{1 - 2^{d/2-p}}$ if $d < 2p$ and $C(d,p) \coloneq \dfrac{2^{p-2p/d}(1 - 2^{-d/2})^{1-2p/d}}{1-2^{p-d/2}}$ if $d > 2p$.
\end{theorem}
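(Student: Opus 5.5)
The plan is to apply Fournier's explicit version \cite{Fournier2023} of the Fournier--Guillin bound to $\mu_n$, regarded as a Borel probability measure on $(\R^d,\|\cdot\|_\infty)$ via \Cref{rmk:mu_n_smaller_embedding}. The samples $\Psi_n(\mathbf{x}^{(i)})$ are i.i.d.\ with law $\mu_n$, so $\empirical{\mu}{n}{N}$ is exactly the empirical measure $\hat{\mu}_N$ of $\mu_n$ in the sense of \Cref{subsec:prelim-distances}. Fournier's result, in the $\ell^\infty$ setting, should have the following shape. For every $q>p$ and every centre $\mathbf{p}_0$,
\begin{equation*}
	\bbE[\W_p(\hat{\mu}_N,\mu)^p] \leq \kappa(d,p,q)\,\cM_q^{(\infty)}(\mu,\mathbf{p}_0)^{p/q}\,N^{-e_{d,p}}
\end{equation*}
in the regimes $d<2p$ and $d>2p$. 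The constant $\kappa$ degenerates as $q\to p$ or $q\to\infty$ in the general statement. In the bounded-support case, though, it admits the limit $q=\infty$, with the moment term replaced by the $\ell^\infty$ enclosing radius. The constants $2^{d/2-1}/(1-2^{d/2-p})$ and $2^{p-2p/d}(1-2^{-d/2})^{1-2p/d}/(1-2^{p-d/2})$ are exactly what this limiting form produces after a dyadic-partition summation. The first step is therefore to pin down the precise form of Fournier's theorem with $q=\infty$, or with the sup of moments, together with the radius normalisation it uses.

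The second step feeds in the geometry from \Cref{sec:geometry-Kn}. By \Cref{prop:props_Kn_2}\ref{it:measure_moment}, with $m=\infty$, we have $\sup_{q\geq1}\cM_q^{(\infty)}(\mu_n)^{1/q}=\diam(X)/2$. More concretely, the centre $D_0=(\diam(X)/2,\dots,\diam(X)/2)$ gives $\|\Psi_n(\mathbf{x})-D_0\|_\infty\leq\diam(X)/2$ for all $\mathbf{x}$, so $\supp\mu_n\subset\overline{B}_\infty(D_0,\diam(X)/2)$. This uses boundedness of $X$; full support enters only through \Cref{prop:props_Kn_2}, and just the upper bound is needed. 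Plugging this into Fournier's bound gives a right-hand side of the form $\kappa'\,(\diam(X)/2)^p N^{-e_{d,p}}$ or $\kappa''\,\diam(X)^p N^{-e_{d,p}}$, depending on whether Fournier normalises by radius or by side length of the enclosing cube.

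The final step is bookkeeping. Translation invariance of $\W_p$ lets us move $D_0$ to the origin. Scaling is handled by $\W_p(\lambda_\#\alpha,\lambda_\#\beta)=\lambda\W_p(\alpha,\beta)$, which is \Cref{lemma:wasserstein_lipschitz} applied to $\lambda$ and $\lambda^{-1}$. Together these reduce to a measure supported in the unit cube $[0,1]^d$, or in $[-1,1]^d$, of $\ell^\infty$-diameter $1$. The factor $\diam(X)^p$ then comes out exactly, and it remains to check that the leftover numerical factor equals $C(d,p)$ in both regimes, with the hypothesis $p\neq d/2$ excluding the logarithmic case.

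I expect the main obstacle to be this constant matching. One must confirm that \cite{Fournier2023} is stated for the sup-norm, or else convert norms, which would introduce $d$-dependent factors not visible in $C(d,p)$. One must also confirm the exact normalisation of the support, whether cube side $1$ or radius $1$, since a mismatch costs a factor $2^p$. I would first try reading Fournier's bounded case as a bound for measures on $[0,1]^d$ with the $\ell^\infty$ metric, where the dyadic cubes have $\ell^\infty$-diameter $2^{-k}$. Reproducing the geometric series $\sum_k 2^{-kp}\min\{1,\sqrt{2^{kd}/N}\}$ should then yield precisely the two displayed expressions for $C(d,p)$.
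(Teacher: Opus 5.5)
Your plan is correct and is essentially the paper's proof: apply \cite[Thm.~2.1]{Fournier2023} directly with the $\ell^\infty$ norm ($m=\infty$), bound the moment term by the enclosing radius $\diam(X)/2$ from \Cref{prop:props_Kn_2}, and let $q\to\infty$. The constant matching you flag as the main obstacle is settled by the exact form of that theorem, $\bbE[\W_p(\empirical{\mu}{n}{N},\mu_n)^p]\leq 2^p\kappa_{d,p}^{(\infty)}\cM_q^{(\infty)}(\mu_n)^{p/q}\theta_{d,p,q}^{(\infty)}N^{-e_{d,p}}$, where $\theta_{d,p,q}^{(\infty)}$ decreases to $1$ as $q\to\infty$ (so, contrary to your hedge, nothing degenerates in that limit); hence $2^p(\diam(X)/2)^p=\diam(X)^p$ and $C(d,p)=\kappa_{d,p}^{(\infty)}$, with no rescaling, norm conversion, or re-derivation of the dyadic sum needed.
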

\begin{proof}
	Write $D_X \coloneq \diam(X)$.
	Since the metric on $\Kn_n(X)$ is $\| \cdot \|_\infty$, we apply \cite[Thm.~2.1]{Fournier2023} with $m = \infty$. Then
	\begin{equation*}
		\bbE[\W_p(\empirical{\mu}{n}{N}, \mu_n)^p] \leq 2^p \kappa_{d,p}^{(\infty)} \left( \cM_{q}^{(\infty)}(\mu_n) \right)^{p/q} \theta_{d,p,q}^{(\infty)} N^{-e_{d,p}},
	\end{equation*}
	with $e_{d,p}$ as in the statement.
	By \Cref{prop:moment_radius,prop:props_Kn_2}\ref{it:measure_moment}, $2^p \left( \cM_{q}^{(\infty)}(\mu_n) \right)^{p/q} \leq 2^p \left( \cM_{\infty}^{(\infty)}(\mu_n) \right)^{p} = D_X^p$. Hence,
	\begin{equation*}
		\bbE[\W_p(\empirical{\mu}{n}{N}, \mu_n)^p] \leq \kappa_{d,p}^{(\infty)} D_X^p \theta_{d,p,q}^{(\infty)} N^{-e_{d,p}}.
	\end{equation*}
		By the comments in \cite[Sec.~2.4]{Fournier2023}, the function $q \mapsto \theta_{d,p,q}^{(m)}$ is decreasing and converges to $1$ as $q \to \infty$.
		Taking the infimum over all $q$ and setting $C(d,p) \coloneq \kappa_{d,p}^{(\infty)}$ yields the result.
\end{proof}

\begin{corollary}
	\label{cor:empirical_Kn_invariant}
	Under the assumptions of \Cref{thm:empirical_Kn}, let $F \colon \Kn_n \to Z$ be a Lipschitz $n$-invariant. Then
	\begin{equation*}
		\bbE[\W_p(\empirical{\nu}{F}{N}, \nu_F)^p] \leq C(d,p) \cdot \bigl(\operatorname{Lip}(F) \diam(X)\bigr)^p \cdot N^{-e_{d,p}}.
	\end{equation*}
\end{corollary}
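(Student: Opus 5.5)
The plan is to reduce the statement to \Cref{thm:empirical_Kn} via \Cref{lemma:wasserstein_lipschitz}, applied pointwise in the sample. First I will fix a realization of the sample $\mathbf{x}^{(1)},\dots,\mathbf{x}^{(N)}$ and observe that, by definition, $\empirical{\nu}{F}{N} = F_\#\empirical{\mu}{n}{N}$ and $\nu_F = F_\#\mu_n$. Both $\empirical{\mu}{n}{N}$ and $\mu_n$ are Borel probability measures supported in $\Kn_n(X) \subset \Kn_n$. I regard $\Kn_n$ as a subset of $(\R^d,\|\cdot\|_\infty)$, which is the same embedding used in \Cref{thm:empirical_Kn}.

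Next I will apply \Cref{lemma:wasserstein_lipschitz} to $F \colon (\Kn_n,\|\cdot\|_\infty) \to (Z,d_Z)$ with $L = \operatorname{Lip}(F)$. The Lipschitz condition in \Cref{def:n_invariant} is stated in the sup norm on matrices. By \Cref{lemma:metrics_Kn} with $p=\infty$, this sup norm coincides with the $\ell^\infty$ norm on $\R^d$, so the constant transfers with no extra factor. The lemma requires a Polish domain; $\Kn_n$ is a closed subset of $\R^d$, hence Polish, so this hypothesis holds. We also need finite $p$-th moments. These hold because $\Kn_n(X)$ is bounded by \Cref{prop:props_Kn}, and $F$ is Lipschitz, so $F(\Kn_n(X))$ is bounded in $Z$. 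The lemma then gives, for every realization of the sample,
\begin{equation*}
	\W_p\bigl(\empirical{\nu}{F}{N},\nu_F\bigr)^p \leq \operatorname{Lip}(F)^p\, \W_p\bigl(\empirical{\mu}{n}{N},\mu_n\bigr)^p .
\end{equation*}

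Finally, I will take expectations of both sides, using monotonicity of expectation, and insert the bound of \Cref{thm:empirical_Kn}. This yields $C(d,p)\,(\operatorname{Lip}(F)\diam(X))^p N^{-e_{d,p}}$, as claimed.

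I expect no step to be a real obstacle. The only points needing care are measurability of the random variable on the left, which follows because it is a composition of measurable maps of the sample, and matching the metric conventions so that no factor such as $2^{-1/p}$ enters.
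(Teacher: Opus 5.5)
Your proposal is correct and follows the paper's proof essentially verbatim: apply \Cref{lemma:wasserstein_lipschitz} to $F$ for each realization of the sample, then take $p$-th powers and expectations and insert \Cref{thm:empirical_Kn}. Your extra checks (that $\Kn_n$ is closed in $\R^d$ and hence Polish, finiteness of moments, and that no factor $2^{-1/p}$ appears for $p=\infty$) are correct details that the paper leaves implicit.
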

\begin{proof}
	Since $\empirical{\nu}{F}{N} = F_\#(\empirical{\mu}{n}{N})$ and $\nu_F = F_\#(\mu_n)$, \Cref{lemma:wasserstein_lipschitz} yields
	\begin{equation*}
		\W_p(\empirical{\nu}{F}{N}, \nu_F) \leq \operatorname{Lip}(F) \, \W_p(\empirical{\mu}{n}{N}, \mu_n)
	\end{equation*}
	for every realization of the sample.
	Taking $p$-th powers and expectations and applying \Cref{thm:empirical_Kn} gives the claim.
\end{proof}

In other words, $\empirical{\nu}{F}{N}$ converges at the same rate as $\empirical{\mu}{n}{N}$, up to an additional multiplicative constant that has the same effect as scaling $X$ by $\operatorname{Lip}(F)$.
By \Cref{lemma:empirical_sample_size}, these bounds immediately tell us how many $n$-tuples suffice for the empirical measures to approximate $\mu_n$ and $\nu_F$ up to a given error with a given confidence, which is the content of \Cref{cor:main-sampling-invariant-bounds}.
\begin{corollary}
	\label{cor:empirical_Kn_samples}
	Under the assumptions of \Cref{thm:empirical_Kn}, choose an error margin $\epsilon > 0$ and a confidence level $0 < \alpha < 1$, and let $N_\ast$ be the positive solution of
	\begin{equation*}
		C(d,p) N_\ast^{-e_{d,p}} + \sqrt{-\tfrac{1}{2} \ln(1-\alpha)} N_\ast^{-1/2} = \epsilon^p.
	\end{equation*}
	Then for all $N > N_\ast$, with probability at least $\alpha$, we have $\W_p(\empirical{\mu}{n}{N}, \mu_n) \leq \epsilon \diam(X)$ and, uniformly over all Lipschitz $n$-invariants $F$, $\W_p(\empirical{\nu}{F}{N}, \nu_F) \leq \epsilon \operatorname{Lip}(F) \diam(X)$.
\end{corollary}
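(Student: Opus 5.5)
The plan is to apply \Cref{lemma:empirical_sample_size} to $\mu = \mu_n$, regarded as a measure on $(\R^d,\|\cdot\|_\infty)$, and then transfer the resulting high-probability bound to every Lipschitz invariant with \Cref{lemma:wasserstein_lipschitz}. Set $D \coloneq \diam(X)$; if $D = 0$ everything is trivial. Take $\beta(N) \coloneq C(d,p) D^p N^{-e_{d,p}}$, which is non-increasing. By \Cref{thm:empirical_Kn}, $\bbE[\W_p(\empirical{\mu}{n}{N},\mu_n)^p] \leq \beta(N)$ for all $N \geq 1$.

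The next step is to check the hypotheses of \Cref{lemma:empirical_sample_size} on the support. By \Cref{prop:props_Kn}\ref{it:prop_Kn_diam}, $\diam_\infty(\Kn_n(X)) = D$, so $\diam(\supp\mu_n) \leq D$. Compactness of the support is the point needing care, since $X$ is only assumed bounded, not compact. The concentration step in \cite[Prop.~20]{WeedBach2019} is really McDiarmid's inequality, which only needs the bounded-differences property. Replacing one sample point changes $\W_p^p$ by at most $D^p/N$, and this holds whenever $\supp\mu_n$ lies in a set of diameter $D$. Alternatively, one can work on the closure $\overline{\Kn_n(X)}$ inside the bounded region $\{M : \|M - D_0\|_\infty \leq D/2\}$ of $\R^d$, which is closed and bounded, hence compact. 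That region contains $\supp\mu_n$, and its diameter is still at most $D$, since distances between points of the closure are limits of distances in $\Kn_n(X)$. So the lemma applies with this $D$. I expect this compactness/support bookkeeping to be the only genuine obstacle.

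With $\beta(N_0)/D^p = C(d,p) N_0^{-e_{d,p}}$, the defining condition in \Cref{lemma:empirical_sample_size} becomes
\[
C(d,p) N_0^{-e_{d,p}} + \sqrt{-\tfrac12\ln(1-\alpha)}\, N_0^{-1/2} \leq \epsilon^p .
\]
The left-hand side is continuous and strictly decreasing in $N_0$, tends to $\infty$ as $N_0 \to 0$, and tends to $0$ as $N_0 \to \infty$. Hence the equation with equality has a unique positive solution $N_\ast$, and it satisfies the lemma's condition. The lemma then gives, for every $N > N_\ast$, with probability at least $\alpha$, $\W_p(\empirical{\mu}{n}{N},\mu_n) < \epsilon D$.

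Finally, on this same event, and without any further probabilistic argument, \Cref{lemma:wasserstein_lipschitz} gives
\[
\W_p(\empirical{\nu}{F}{N},\nu_F) \leq \operatorname{Lip}(F)\,\W_p(\empirical{\mu}{n}{N},\mu_n) \leq \epsilon\operatorname{Lip}(F) D
\]
for every Lipschitz $F$ simultaneously. Lipschitz here is with respect to $\|\cdot\|_\infty$, which matches the embedding chosen in \Cref{thm:empirical_Kn}. This yields the uniformity over $F$.
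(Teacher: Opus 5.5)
Your proposal is correct and follows the paper's proof essentially step for step. Like the paper, you apply \Cref{lemma:empirical_sample_size} with $D=\diam(X)$ and $\beta(N)=C(d,p)\diam(X)^pN^{-e_{d,p}}$, justified by $\supp\mu_n$ lying in the compact closure of $\Kn_n(X)$ of diameter $\diam(X)$, and then use \Cref{lemma:wasserstein_lipschitz} on the same event to get the bound for all Lipschitz $F$ simultaneously. Your extra checks, namely existence and uniqueness of $N_\ast$ by monotonicity and the match of the sup-norm Lipschitz condition with the chosen embedding, are details the paper leaves implicit.
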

\begin{proof}
	Write $D_X \coloneq \diam(X)$.
	We may assume $D_X > 0$, since otherwise both sides vanish.
	The support of $\mu_n$ is contained in the closure of $\Kn_n(X)$ in $\R^d$, which is compact of diameter $D_X$ by \Cref{prop:props_Kn}\ref{it:prop_Kn_diam}.
	By \Cref{thm:empirical_Kn}, \Cref{lemma:empirical_sample_size} applies with $D \coloneq D_X$ and $\beta(N) \coloneq C(d,p) D_X^p N^{-e_{d,p}}$, and its condition on $N_\ast$ is the displayed equation.
	This gives $\W_p(\empirical{\mu}{n}{N}, \mu_n) < \epsilon D_X$ with probability at least $\alpha$.
	In this event, \Cref{lemma:wasserstein_lipschitz} gives $\W_p(\empirical{\nu}{F}{N}, \nu_F) \leq \operatorname{Lip}(F) \, \W_p(\empirical{\mu}{n}{N}, \mu_n) \leq \epsilon \operatorname{Lip}(F) D_X$ for all Lipschitz $F$.
\end{proof}

\begin{remark}
	\label{rmk:uniform-sample-reuse}
	Since \Cref{cor:empirical_Kn_samples} is uniform in $F$, the same i.i.d.\ sample of $n$-tuples may be used to obtain the empirical distributions of any number of $n$-invariants.
	While this makes the observed empirical distributions dependent, the Wasserstein bound holds for all of them simultaneously with the same confidence level $\alpha$.
	By contrast, if $R$ separate sampling runs are used and all $R$ guarantees are to hold with overall confidence $\alpha$, the union bound tells us that the per-run confidence has to be set to $1-(1-\alpha)/R$.
\end{remark}

Since $F$ is Lipschitz, its range satisfies $\diam F(\Kn_n(X)) \leq \operatorname{Lip}(F) \diam(X)$.
When $F$ satisfies the more stringent bound $\diam F(\Kn_n(X)) \leq \lambda \diam(X)$ for some $\lambda \leq \operatorname{Lip}(F)$, it pays to apply \Cref{lemma:empirical_sample_size} to $\nu_F$ directly, instead of moving through $\mu_n$, which gives the following refinement.
\begin{corollary}
	\label{cor:empirical_Kn_invariant_range}
	Under the assumptions of \Cref{cor:empirical_Kn_samples}, assume additionally that $\diam F(\Kn_n(X)) \leq \lambda \diam(X)$ for some $\lambda > 0$.
	Choose an error margin $\epsilon > 0$ and a confidence level $0 < \alpha < 1$, and let $N_\lambda$ be the positive solution of
	\begin{equation*}
		\operatorname{Lip}(F)^p C(d,p) N_\lambda^{-e_{d,p}} + \lambda^p \sqrt{-\tfrac{1}{2} \ln(1-\alpha)} N_\lambda^{-1/2} = \epsilon^p.
	\end{equation*}
	Then for all $N > N_\lambda$, $\W_p(\empirical{\nu}{F}{N}, \nu_F) \leq \epsilon \diam(X)$ with probability at least $\alpha$.
\end{corollary}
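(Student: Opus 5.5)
The plan is to apply \Cref{lemma:empirical_sample_size} directly to the measure $\nu_F$ on $Z$, instead of passing through $\mu_n$ as in \Cref{cor:empirical_Kn_samples}. That lemma needs two inputs: a bound $D$ on the diameter of the support of $\nu_F$, and a non-increasing bound $\beta(N)$ on $\bbE[\W_p(\empirical{\nu}{F}{N},\nu_F)^p]$. The point of the refinement is that these two inputs can be chosen at different scales. The expectation bound keeps the factor $\operatorname{Lip}(F)$, while the concentration term only sees the range diameter $\lambda\diam(X)$.

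First, $\empirical{\nu}{F}{N}=F_\#\empirical{\mu}{n}{N}$ is exactly the empirical measure of the i.i.d.\ samples $F(\Psi_n(\mathbf{x}^{(i)}))\sim\nu_F$. So \Cref{lemma:empirical_sample_size} is applicable to $\nu_F$ with this empirical measure.

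Next, I would check the support hypothesis. The support of $\nu_F$ lies in the closure of $F(\Kn_n(X))$. This closure has diameter at most $\lambda\diam(X)$, so we may set $D\coloneq\lambda\diam(X)$ (assuming $\diam(X)>0$; otherwise both sides vanish). For compactness, note that $\Kn_n(X)$ is totally bounded with compact closure by \Cref{prop:props_Kn}, and $F$ is Lipschitz. Hence $F(\Kn_n(X))$ is totally bounded, and its closure in the Polish space $Z$ is compact.

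For the expectation bound I take $\beta(N)\coloneq C(d,p)\operatorname{Lip}(F)^p\diam(X)^pN^{-e_{d,p}}$ from \Cref{cor:empirical_Kn_invariant}, which is non-increasing in $N$. The condition of \Cref{lemma:empirical_sample_size} then reads
\begin{align*}
\frac{\operatorname{Lip}(F)^pC(d,p)N_0^{-e_{d,p}}}{\lambda^p}+\sqrt{-\tfrac12\ln(1-\alpha)}\,N_0^{-1/2}\leq\epsilon'^p ,
\end{align*}
and the lemma yields $\W_p<\epsilon' D=\epsilon'\lambda\diam(X)$ with probability at least $\alpha$.

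The main obstacle is matching this to the normalization in the statement, where the target is $\epsilon\diam(X)$ with no factor $\lambda$. I would set $\epsilon'\coloneq\epsilon/\lambda$. Multiplying the condition through by $\lambda^p$ turns it into
\begin{align*}
\operatorname{Lip}(F)^pC(d,p)N_0^{-e_{d,p}}+\lambda^p\sqrt{-\tfrac12\ln(1-\alpha)}\,N_0^{-1/2}\leq\epsilon^p ,
\end{align*}
which is precisely the defining equation of $N_\lambda$. The left-hand side is strictly decreasing in $N_0$ and tends to $\infty$ as $N_0\to0$ and to $0$ as $N_0\to\infty$, so the positive solution $N_\lambda$ exists and is unique. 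Taking $N_0\coloneq N_\lambda$, the lemma gives, for every $N>N_\lambda$, the bound $\W_p(\empirical{\nu}{F}{N},\nu_F)<\epsilon\diam(X)$ with probability at least $\alpha$, which is the claim.
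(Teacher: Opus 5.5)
Your proposal is correct and matches the paper's proof. You apply \Cref{lemma:empirical_sample_size} directly to $\nu_F$ with $D=\lambda\diam(X)$, with $\beta(N)=\operatorname{Lip}(F)^pC(d,p)\diam(X)^pN^{-e_{d,p}}$ taken from \Cref{cor:empirical_Kn_invariant}, and with error margin $\epsilon/\lambda$, so that the lemma's condition is the defining equation of $N_\lambda$ divided by $\lambda^p$; your remarks on the compact closure of $F(\Kn_n(X))$ and on the existence and uniqueness of $N_\lambda$ correctly fill in details the paper leaves implicit.
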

\begin{proof}
	Write $D_X \coloneq \diam(X)$.
	We may again assume $D_X > 0$.
	Since $\Kn_n(X)$ is bounded (\Cref{prop:props_Kn}\ref{it:prop_Kn_diam}) and $F$ is Lipschitz, $F(\Kn_n(X))$ is totally bounded, so its closure in the Polish space $Z$ is a compact set of diameter $\lambda D_X$ containing $\supp \nu_F$.
	By \Cref{cor:empirical_Kn_invariant}, \Cref{lemma:empirical_sample_size} applies with $D \coloneq \lambda D_X$, $\beta(N) \coloneq \operatorname{Lip}(F)^p C(d,p) D_X^p N^{-e_{d,p}}$, and error margin $\epsilon/\lambda$, and its condition on $N_\lambda$ is the displayed equation divided by $\lambda^p$.
	This gives $\W_p(\empirical{\nu}{F}{N}, \nu_F) < \epsilon D_X$ with probability at least $\alpha$.
\end{proof}

\begin{figure}
	\centering
	\includegraphics[width=\textwidth]{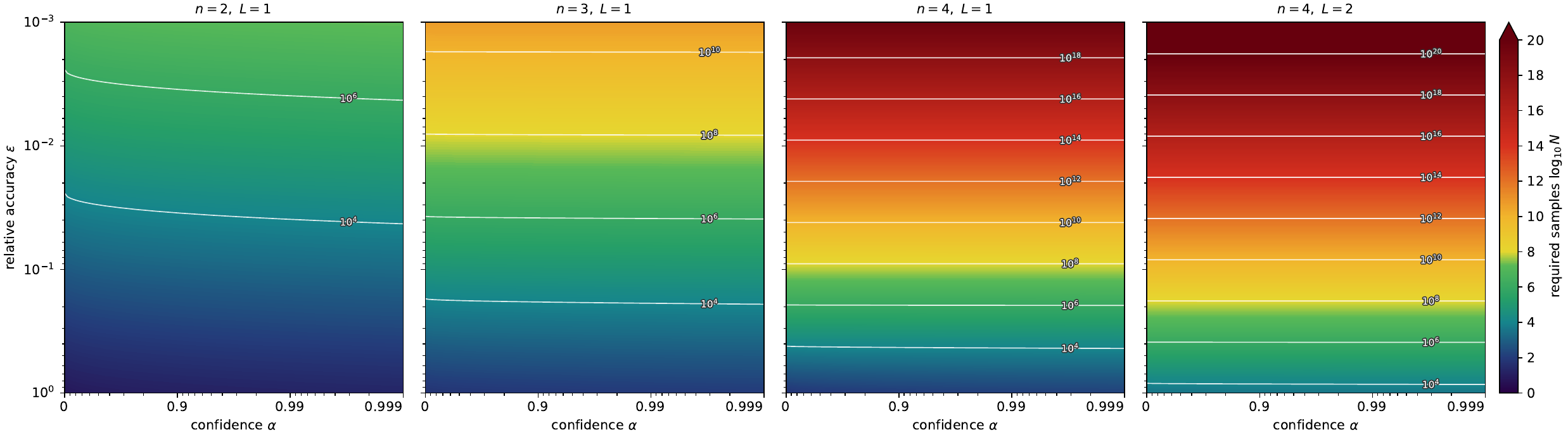}
	\caption{Required sample count $N_{\lambda=1}$ from \Cref{cor:empirical_Kn_invariant_range} for estimating $\nu_F$ up to $\W_1$ error $\epsilon \diam(X)$, plotted against confidence $\alpha$ and accuracy $\epsilon$.
	White contours mark powers of ten in $N$.
	The colours indicate sampling cost from feasible (blue/green) through expensive (yellow) to prohibitive (red) as $N$ increases.}
	\label{fig:relative-sample-size-nomogram}
\end{figure}

To reach a $\W_1$ error of $\epsilon \diam(X)$ from \Cref{cor:empirical_Kn_samples}, one has to ask for an accuracy of $\epsilon/\operatorname{Lip}(F)$.
Since $\lambda$ enters only the second term of the equation in \Cref{cor:empirical_Kn_invariant_range}, the two sample sizes satisfy $N_\lambda(\epsilon) \leq N_{\operatorname{Lip}(F)}(\epsilon) = N_\ast(\epsilon/\operatorname{Lip}(F))$.
The improvement is modest, though: for $d > 2p$ the sample size is governed by the first term $\operatorname{Lip}(F)^p C(d,p) N^{-e_{d,p}}$, which involves neither $\lambda$ nor $\alpha$, so the target accuracy $\epsilon$ drives the count while the range of $F$ and the confidence level only move the smaller term (cf.\ \Cref{fig:relative-sample-size-nomogram}).

Both corollaries pass through the curvature measure $\mu_n$.
This is what makes \Cref{cor:empirical_Kn_samples} uniform over all Lipschitz $n$-invariants, and it is also what ties the exponent to the dimension $d=\binom{n}{2}$ of the space of distance matrices.
If only a given real-valued $n$-invariant is of interest, one can instead apply the argument of \Cref{thm:empirical_Kn} to $\nu_F$ directly, viewed as a measure on $\R$.
\begin{prop}
	\label{prop:empirical_real_invariant}
	Fix a real number $1 \leq p < \infty$ and integers $N \geq 1$, $n \geq 2$.
	Let $(X, d_X, \mu_X)$ be a bounded mm-space, and let $F \colon \Kn_n \to \R$ be a real-valued $n$-invariant with $\diam F(\Kn_n(X)) \leq \lambda \diam(X)$ for some $\lambda > 0$.
	Then
	\begin{equation*}
		\bbE[\W_p(\empirical{\nu}{F}{N}, \nu_F)^p] \leq C(1,p) \cdot \bigl(\lambda \diam(X)\bigr)^p \cdot N^{-1/2},
		\qquad
		C(1,p) = \frac{1}{2^{1/2} - 2^{1-p}}.
	\end{equation*}
	Moreover, choose an error margin $\epsilon > 0$ and a confidence level $0 < \alpha < 1$, and let
	\begin{equation*}
		N_F \coloneq \left( \Bigl( C(1,p) + \sqrt{-\tfrac{1}{2} \ln(1-\alpha)} \Bigr) \frac{\lambda^p}{\epsilon^p} \right)^2.
	\end{equation*}
	Then for all $N > N_F$, $\W_p(\empirical{\nu}{F}{N}, \nu_F) \leq \epsilon \diam(X)$ with probability at least $\alpha$.
\end{prop}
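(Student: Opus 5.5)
The plan is to repeat the argument of \Cref{thm:empirical_Kn} with $\nu_F$ viewed as a measure on $\R$, so that $d=1$. Since $1<2p$ for every $p\geq1$, we are in the regime $d<2p$ with $e_{1,p}=1/2$. First I observe that the samples $F(\Psi_n(\mathbf{x}^{(i)}))$ are i.i.d.\ with law $\nu_F$. Hence $\empirical{\nu}{F}{N}$ is exactly the empirical measure of $N$ i.i.d.\ draws from $\nu_F$, and \cite[Thm.~2.1]{Fournier2023} applies on $(\R,|\cdot|)$. Measurability of $F$ is all that is needed here; no Lipschitz property enters.

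Concretely, \cite[Thm.~2.1]{Fournier2023} gives
\begin{equation*}
\bbE[\W_p(\empirical{\nu}{F}{N},\nu_F)^p]\leq 2^p\kappa_{1,p}\,\cM_q(\nu_F)^{p/q}\theta_{1,p,q}N^{-1/2}.
\end{equation*}
I will bound the moment term by \Cref{prop:moment_radius}: $\cM_q(\nu_F)^{1/q}\leq R(\supp\nu_F)$. The support lies in the closure of $F(\Kn_n(X))$, which is a subset of $\R$ of diameter at most $\lambda\diam(X)$. A subset of the real line of diameter $D$ lies in an interval of length $D$, so its enclosing radius is at most $D/2$. This gives $2^p\cM_q(\nu_F)^{p/q}\leq(\lambda\diam(X))^p$. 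Letting $q\to\infty$ sends $\theta_{1,p,q}\to1$, as in the proof of \Cref{thm:empirical_Kn}.

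Next I evaluate $\kappa_{1,p}$ from the formula $C(d,p)=2^{d/2-1}/(1-2^{d/2-p})$ at $d=1$. This gives $2^{-1/2}/(1-2^{1/2-p})$, and multiplying numerator and denominator by $2^{1/2}$ yields $1/(2^{1/2}-2^{1-p})$, which is the stated constant. If $\diam(X)=0$ everything vanishes trivially. Note that full support of $\mu_X$ is not assumed, but it is also not needed, since the radius bound only uses the range of $F$.

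For the sample-size statement I apply \Cref{lemma:empirical_sample_size} with $D:=\lambda\diam(X)$, error margin $\epsilon/\lambda$, and $\beta(N):=C(1,p)D^pN^{-1/2}$. The condition on $N_0$ becomes
\begin{equation*}
\Bigl(C(1,p)+\sqrt{-\tfrac12\ln(1-\alpha)}\Bigr)N_0^{-1/2}\leq\epsilon^p/\lambda^p.
\end{equation*}
This is solved exactly by $N_0=N_F$. The lemma then gives $\W_p<(\epsilon/\lambda)D=\epsilon\diam(X)$ for all $N>N_F$ with probability at least $\alpha$. The lemma requires compact support, which holds: $F(\Kn_n(X))$ is a bounded subset of $\R$, so its closure is compact. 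If this range were unbounded the hypothesis $\diam F(\Kn_n(X))\leq\lambda\diam(X)$ would fail, so there is no gap.

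The only delicate point is bookkeeping the factor $2^p$ against the enclosing radius $D/2$ in Fournier's constant, exactly as in \Cref{thm:empirical_Kn}.
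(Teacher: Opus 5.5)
Your proposal is correct and follows essentially the same route as the paper's proof. You apply \cite[Thm.~2.1]{Fournier2023} to $\nu_F$ on $\R$ with $d=1$ and bound the moment term via \Cref{prop:moment_radius} by the enclosing radius $\lambda\diam(X)/2$ of the closure of $F(\Kn_n(X))$. Then you let $q\to\infty$ and invoke \Cref{lemma:empirical_sample_size} with $D=\lambda\diam(X)$ and error margin $\epsilon/\lambda$; your explicit check that the $d<2p$ formula at $d=1$ gives $1/(2^{1/2}-2^{1-p})$ fills in a step the paper leaves implicit.
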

\begin{proof}
	Write $D_X \coloneq \diam(X)$ and $D \coloneq \lambda D_X$.
	We may assume $D_X > 0$, since otherwise $\nu_F$ is a Dirac measure and both sides vanish.
	The closure $K$ of $F(\Kn_n(X))$ in $\R$ is a compact set of diameter at most $D$ containing $\supp \nu_F$, and $\empirical{\nu}{F}{N} = F_\#(\empirical{\mu}{n}{N})$ is the empirical measure of the $N$ independent samples $F(\Psi_n(\mathbf{x}^{(i)})) \sim \nu_F$.
	We repeat the proof of \Cref{thm:empirical_Kn} for $\nu_F$ on $\R$ in place of $\mu_n$ on $\R^d$, i.e.\ with $d = 1$, which satisfies $d < 2p$ for every $p \geq 1$, so the exponent is $e_{1,p} = 1/2$.
	All norms on $\R$ agree, and \cite[Thm.~2.1]{Fournier2023} with $m = \infty$ gives
	\begin{equation*}
		\bbE[\W_p(\empirical{\nu}{F}{N}, \nu_F)^p] \leq 2^p \kappa_{1,p}^{(\infty)} \left( \cM_{q}^{(\infty)}(\nu_F) \right)^{p/q} \theta_{1,p,q}^{(\infty)} N^{-1/2}.
	\end{equation*}
	By \Cref{prop:moment_radius}, $\cM_{q}^{(\infty)}(\nu_F)^{1/q} \leq R_\infty(K) \leq D/2$, so $2^p \left( \cM_{q}^{(\infty)}(\nu_F) \right)^{p/q} \leq D^p$.
	Taking the infimum over $q$ as in the proof of \Cref{thm:empirical_Kn} gives the first claim with $C(1,p) = \kappa_{1,p}^{(\infty)}$.
	For the second claim, \Cref{lemma:empirical_sample_size} applies to $\nu_F$ with $D$, $\beta(N) \coloneq C(1,p) D^p N^{-1/2}$, and error margin $\epsilon/\lambda$.
	Its condition on $N_F$ reads $\bigl( C(1,p) + \sqrt{-\frac{1}{2} \ln(1-\alpha)} \bigr) N_F^{-1/2} \leq \epsilon^p/\lambda^p$, which holds with equality by definition of $N_F$.
	This gives $\W_p(\empirical{\nu}{F}{N}, \nu_F) < (\epsilon/\lambda) D = \epsilon D_X$ with probability at least $\alpha$.
\end{proof}
\begin{remark}
Nothing in the proof goes beyond the fact that $\nu_F$ lives on $\R$, and such one-dimensional bounds are classical; see \cite{BobkovLedoux2019} for a comprehensive treatment.
\end{remark}

The main qualitative content of \Cref{thm:empirical_Kn} lies in the exponents $N^{-1/2}$ and $N^{-p/d}$.
The examples following Thm.~1 in \cite{convergence_empirical_fournier} show that the exponents $e_{d,p}=1/2$ and $e_{d,p}=p/d$ of \cite[Thm.~2.1]{Fournier2023} are sharp.
We adapt them to give an example of a curvature set where the growth rate is sharp.
\begin{example}
	\label{ex:tight_example_two_points}
	Let $(X, d_X, \mu_X)$ be an mm-space with two points at distance $D$ and uniform measure $\mu_X$. Then $\Kn_2(X)$, viewed as a subset of $\R$, is the set $\{0, D\}$ and $\mu_2$ is the uniform measure. Since $d/2 = 1/2 < 1$, \Cref{thm:empirical_Kn} yields
	\begin{equation*}
		\bbE[\W_p(\empirical{\mu}{2}{N}, \mu_2)^p] \leq C(1,p) D^p N^{-1/2} = \frac{D^p}{2^{1/2}-2^{1-p}} N^{-1/2}
	\end{equation*}
	with $\displaystyle C(1,p) = \frac{1}{2^{1/2} - 2^{1-p}}$ for any $p \geq 1$. On the other hand, $\empirical{\mu}{2}{N} = \frac{K}{N}\delta_0 + (1-\frac{K}{N})\delta_D$, where $K \sim \operatorname{Bin}(N,1/2)$ is the number of sampled distances equal to $0$, so $\W_p(\empirical{\mu}{2}{N}, \mu_2)^p = D^p \lvert K/N - 1/2 \rvert$ as in Ex.~(a) following Thm.~1 in \cite{convergence_empirical_fournier}.
	By symmetry and the identity $\frac{N-2k}{N}\binom{N}{k} = \binom{N-1}{k} - \binom{N-1}{k-1}$, the resulting sum telescopes, and Stirling's formula yields
	\begin{equation*}
		\bbE[\W_p(\empirical{\mu}{2}{N}, \mu_2)^p]
		= \frac{D^p}{2^N} \sum_{k=0}^{N} \Bigl\lvert \frac{k}{N} - \frac{1}{2} \Bigr\rvert \binom{N}{k}
		= \frac{D^p}{2^N} \binom{N-1}{\lfloor N/2 \rfloor}
		\sim \frac{D^p}{\sqrt{2\pi}} N^{-1/2}.
	\end{equation*}
\end{example}

\begin{remark}
	\label{rmk:tight_example_two_points}
	When $p=1$, the ratio between the constants $C(1,p)$ and $\displaystyle \frac{1}{\sqrt{2\pi}}$ is $\displaystyle \frac{\sqrt{2\pi}}{\sqrt{2}-1} \approx 6.0515$, so the above example shows that the constant $C(d,p)$ is not always optimal.
	Nevertheless, \cite{Fournier2023} put a lot of work into optimizing $C(d,p)$, and what matters for applications is that it is explicit, which is what allows us to compute concrete sample sizes such as those in \Cref{fig:relative-sample-size-nomogram}.
\end{remark}

Unlike in the last example, the exponents in \Cref{thm:empirical_Kn} are unlikely to be sharp for $n \geq 3$, because $\Kn_n(X)$ is often of much lower dimension than $\binom{n}{2}$ (\Cref{rmk:ahlfors_dim_Kn_S1}).
Combining the estimates of \cite{WeedBach2019} with \Cref{prop:ahlfors-regularity}, we can replace $\binom{n}{2}$ by $ns$ for Ahlfors $s$-regular $X$, which is an improvement whenever $s < (n-1)/2$.
This is the content of \Cref{thm:main-sampling-intrinsic}, which is the following result.
\begin{theorem}
	\label{thm:empirical_Kn_intrinsic_WB}
	Fix an integer $n \geq 2$.
	Let $d \coloneq n^2$ or $d \coloneq \binom{n}{2}$.
	Let $(X, d_X, \mu_X)$ be an Ahlfors $s$-regular mm-space.
	Embed $\Kn_n(X)$ in $(\R^d, \| \cdot \|_\infty)$.
	Fix real numbers $1 \leq p \leq \frac{1}{2}d_M(\Kn_n(X))$ and $t > s$.
	Then there exists $0 < \epsilon' \leq 1$, depending on $(X, d_X, \mu_X)$, $n$, $p$, and $t$ but not on $N$, such that for all $N \geq 1$
	\begin{equation*}
		\bbE[\W_p(\empirical{\mu}{n}{N}, \mu_n)^p] \leq \diam(X)^p \left( C_1 \cdot N^{-p/nt} + C_2 \cdot N^{-1/2} \right),
	\end{equation*}
	where $C_1 \coloneq 3^{\frac{3ntp}{nt-2p}+1} \left( \frac{1}{3^{nt/2-p}-1}+3 \right)$ and $C_2 \coloneq (27/\epsilon')^{nt/2}$.
\end{theorem}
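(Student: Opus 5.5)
The plan is to apply the main convergence theorem of Weed and Bach \cite[Thm.~1]{WeedBach2019} to $\mu_n$, after rescaling $\Kn_n(X)$ to diameter at most $1$. Their result covers compact spaces of diameter at most $1$ and measures $\mu$ with upper Wasserstein dimension $d_p^*(\mu) < t'$. It gives, for all $N\geq 1$,
\[
\bbE[\W_p(\hat\mu_N,\mu)^p] \leq C_1 N^{-p/t'} + C_2 N^{-1/2},
\]
where $C_1 = 3^{\frac{3t'p}{t'-2p}+1}\bigl(\frac{1}{3^{t'/2-p}-1}+3\bigr)$ and $C_2 = (27/\epsilon')^{t'/2}$. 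Here $\epsilon'\in(0,1]$ is a scale below which the covering numbers of the measure obey the dimension bound $t'$. We will use $t' \coloneq nt$, which reproduces exactly the constants in the statement.

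First, set $D_X\coloneq\diam(X)$; we may assume $D_X>0$. By \Cref{lemma:ahlfors-compact}, $X$ is compact and $\mu_X$ is fully supported. By \Cref{prop:props_Kn}\ref{it:prop_Kn_diam},\ref{it:prop_Kn_compact}, $\Kn_n(X)\subset(\R^d,\|\cdot\|_\infty)$ is compact with $\diam_\infty(\Kn_n(X))=D_X$. This holds for both choices of $d$, because the $\ell^\infty$ norms on $\R^{n\times n}$ and $\R^{\binom n2}$ coincide on $\Kn_n$ by \Cref{lemma:metrics_Kn}. We then rescale the metric by $1/D_X$. As in the proof of \Cref{lemma:empirical_sample_size}, this turns $\W_p^p$ into $\W_p^p/D_X^p$ and produces the factor $\diam(X)^p$ in front.

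Second, we verify the dimension hypothesis. \Cref{prop:ahlfors-regularity}\ref{it:ahlfors_Kn} with $m=\infty$ gives $d_p^*(\mu_n)\leq d_M(\Kn_n(X))\leq ns<nt$ in the stated range $1\le p\le\frac12 d_M(\Kn_n(X))$. Rescaling leaves these dimensions unchanged.

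Third, we check the auxiliary condition $t'>2p$ that Weed--Bach need for their $C_1$ to be finite. It holds because $2p\leq d_M(\Kn_n(X))\leq ns<nt$. Since $d_p^*(\mu_n)<nt$, the definition of upper Wasserstein dimension supplies some $\epsilon'\in(0,1]$ such that the $(\epsilon,\epsilon^{dp/(d-2p)})$-covering numbers of $\mu_n$ are at most $\epsilon^{-nt}$ for all $\epsilon\le\epsilon'$. This $\epsilon'$ depends on $X,n,p,t$ but not on $N$. Substituting into \cite[Thm.~1]{WeedBach2019} and undoing the rescaling yields the claimed bound.

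The main obstacle is matching hypotheses and constants exactly with Weed--Bach. This includes the precise form of their Theorem~1 (they state it for $t'>d_p^*$, with $C_2$ given in terms of $\epsilon'$), and confirming that the threshold $\epsilon'$ survives rescaling. Because $\epsilon'$ is defined only through a $\limsup$, it is inherently non-explicit, which is why $C_2$ cannot be made concrete. We also need care at the boundary case $p=\frac12 d_M$. If needed, the strict inequality $ns<nt$ ensures $t'/2>p$ there as well.
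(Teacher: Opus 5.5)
Your proposal is correct and follows essentially the paper's argument: rescale to $X'=(X,d_X/\diam(X),\mu_X)$ so that $\Kn_n(X')$ has $\ell^\infty$-diameter $1$, use \Cref{prop:ahlfors-regularity} to get $d_p^*(\mu_{n,X'})\leq ns<nt$ (the paper applies it directly to the Ahlfors-regular space $X'$ rather than invoking rescaling invariance of $d_p^*$), extract $\epsilon'$, apply Weed--Bach's finite-sample bound with $nt$ as their dimension parameter, and scale back. One correction: the explicit bound with $C_1$ and $C_2=(27/\epsilon')^{nt/2}$ is \cite[Prop.~5]{WeedBach2019}, which is what the paper invokes (their Theorem~1 gives only the asymptotic rate), and its hypothesis is $d_\epsilon\bigl(\mu_{n,X'},\epsilon^{ntp/(nt-2p)}\bigr)\leq nt$ for all $\epsilon\leq\epsilon'$; this follows from $d_p^*(\mu_{n,X'})<nt$, as in the proof of \cite[Cor.~1]{WeedBach2019}, because $s\mapsto sp/(s-2p)$ is decreasing and covering numbers are non-increasing in the discarded mass.
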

\begin{proof}
	Write $D_X \coloneq \diam(X)$.
	We may assume $D_X > 0$, since otherwise both sides vanish.
	As \cite{WeedBach2019} normalize to diameter at most $1$, we first prove the bound for the rescaled mm-space $X' \coloneq (X, d_X/D_X, \mu_X)$, which has diameter $1$ and is again Ahlfors $s$-regular, with constant $C_X \max(D_X^s, D_X^{-s})$.
	Its curvature set $\Kn_n(X') = D_X^{-1} \Kn_n(X)$ is compact by \Cref{lemma:ahlfors-compact} and \Cref{prop:props_Kn}\ref{it:prop_Kn_compact}, has diameter $1$ by \Cref{prop:props_Kn}\ref{it:prop_Kn_diam}, and has the same Minkowski dimension as $\Kn_n(X)$.
	Hence $1 \leq p \leq \frac{1}{2} d_M(\Kn_n(X'))$, and \Cref{prop:ahlfors-regularity}\ref{it:ahlfors_Kn} applied to $X'$ yields $d_p^*(\mu_{n,X'}) \leq ns < nt$ for its curvature measure $\mu_{n,X'}$.
	Following the proof of \cite[Cor.~1]{WeedBach2019}, this implies there exists $0 < \epsilon' \leq 1$, depending only on $\mu_{n,X'}$, $p$, and $t$, that satisfies the hypothesis of \cite[Prop.~5]{WeedBach2019} for $\mu_{n,X'}$ with $nt$ in place of their $s$, so for all $N \geq 1$ we have $\bbE[\W_p(\empirical{\mu}{n}{N}', \mu_{n,X'})^p] \leq C_1 \cdot N^{-p/nt} + C_2 \cdot N^{-1/2}$, where the image $\empirical{\mu}{n}{N}'$ of $\empirical{\mu}{n}{N}$ under scaling by $1/D_X$ is an empirical measure of $\mu_{n,X'}$.
	Scaling by $D_X$ maps $\mu_{n,X'}$ to $\mu_{n,X}$ and $\empirical{\mu}{n}{N}'$ to $\empirical{\mu}{n}{N}$ and multiplies $\W_p$ by $D_X$, which gives the claim.
\end{proof}

\begin{remark}
	\label{rmk:concentration-inequalities}
	Combining \Cref{lemma:empirical_sample_size} with \Cref{thm:empirical_Kn_intrinsic_WB} gives a version of \Cref{cor:empirical_Kn_samples} with exponent $p/nt$ in place of $e_{d,p}$, which is an improvement exactly when $nt < \binom{n}{2}$.
	It is of no practical use, however, because $C_2$ depends on the mm-space through the non-explicit parameter $\epsilon'$, so the required sample size cannot be computed.
\end{remark}

\section{Block decomposition of four-point hyperbolicity distributions}
\label{sec:blockwise-sampling}

In this section we prove \Cref{thm:main-block-decomposition}, which states that the hyperbolicity distribution of a finite mm-space is the sum of those of the blocks of its tight span.
Intuitively, the four-point hyperbolicity deficit only registers how geodesics branch, not how far out they extend after branching.
A finite metric space has no geodesics to speak of, so we first embed $X$ isometrically into its tight span, a geodesic metric space that contains $X$ and in which this branching becomes visible.
For finite $X$ the tight span decomposes into blocks, and every quadruple with a nonzero deficit can be localized to a single block, so that non-tree-likeness is traced to identifiable pieces of $X$.
As a consequence, the hyperbolicity distribution can also be estimated block by block, which reduces the sampling cost when the blocks that are not intervals are small compared with $X$.

We begin by recalling the definition of the tight span, introduced independently by Isbell \cite{isbell-six-theorems} and Dress \cite{dress-tight-span}; we follow the presentation of Lang \cite[Sec.~3]{lang-injective-hulls}.
The idea is to adjoin to $X$ every point that could lie on a geodesic between points of $X$.
Each such point $p$ is described by its distances $f(x) = d(p,x)$ to the points of $X$, and the triangle inequality requires $f(x)\geq d_X(x,x') - f(x')$ for all $x, x' \in X$, with equality exactly when $p$ lies on a geodesic from $x$ to $x'$.
The tight span is the set of all such functions $f$ that are minimal, meaning that no value $f(x)$ can be lowered.

\begin{defn}[Tight span {\cite{isbell-six-theorems, dress-tight-span}}]
	Let $(X,d_X)$ be a metric space. The \emph{tight span} $T(X,d_X)$ is the set of functions $f \colon X \to \R$ such that
	\begin{equation*}
		f(x) = \sup_{x' \in X} \big[d_X(x,x')-f(x')\big] \quad \text{for all } x \in X,
	\end{equation*}
	equipped with the $L^\infty$ metric $d_\infty(f,g) \coloneq \sup_{x \in X} |f(x) - g(x)|$.
\end{defn}

A metric space $E$ is \emph{injective} if every $1$-Lipschitz map into $E$ defined on a subset $A$ of a metric space $B$ extends to a $1$-Lipschitz map $B \to E$ \cite[Sec.~1]{isbell-six-theorems}.
The tight span is also known as the \emph{injective hull} of $X$, since it is the smallest injective metric space containing $X$ \cite[2.10]{isbell-six-theorems}.

We will need the following properties.
\begin{prop}
	\label{prop:tight_span}
	Let $(X, d_X)$ be a metric space. Then:
	\begin{enumerate}
		\item The map $h_\bullet \colon X \to T(X,d_X)$ given by $x \mapsto \big( h_x \colon x' \mapsto d_X(x,x') \big)$ is an isometric embedding $(X, d_X) \hookrightarrow T(X, d_X)$.
		\item $T(X, d_X)$ is geodesic.
		\item If $X$ is finite, $T(X ,d_X)$ is a finite polytopal complex, i.e.\ a polyhedral complex whose cells are convex compact polytopes.
		\item If $A \subset X$, there is an isometric embedding $\iota \colon T(A, d_X|_A) \hookrightarrow T(X, d_X)$ with $\iota(f)|_A = f$ for all $f \in T(A, d_X|_A)$. It is compatible with the embeddings of the first item, that is, $\iota(h_a) = h_a$ for all $a \in A$.
	\end{enumerate}
\end{prop}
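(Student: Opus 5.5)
The four properties are classical facts about the injective hull; I would verify (i) and (ii) directly and cite Lang \cite[Sec.~3]{lang-injective-hulls} and Isbell \cite{isbell-six-theorems} for (iii) and (iv).

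For (i), I first check that $h_x \in T(X,d_X)$. By the triangle inequality, $d_X(x,x'') - d_X(x,x') \leq d_X(x',x'')$ for all $x''$. Taking $x''=x$ gives equality, so
\begin{align*}
	\sup_{x''}\bigl[d_X(x',x'') - h_x(x'')\bigr] = h_x(x').
\end{align*}
Next I compute $d_\infty(h_x,h_y) = \sup_{z}|d_X(x,z)-d_X(y,z)|$. This is at most $d_X(x,y)$ by \Cref{lemma:metrics}\ref{it:metric_1}, and the value $d_X(x,y)$ is attained at $z=y$. Hence $h_\bullet$ is an isometric embedding.

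For (ii), I would use the standard fact that $T(X,d_X)$ is injective. Injective metric spaces are hyperconvex, and hyperconvex spaces are geodesic: for $f,g$ at distance $D$ and $t\in[0,1]$, the closed balls $\bar B(f,tD)$ and $\bar B(g,(1-t)D)$ intersect. A more hands-on route is also available. One observes that each $f\in T$ satisfies $f(x)+f(x')\geq d_X(x,x')$ and that $T$ is a retract of the set $\Delta(X)$ of such functions via a $1$-Lipschitz retraction $p$. One then takes midpoints in $\Delta(X)$, namely the pointwise convex combination, and pushes them to $T$ by $p$. The main obstacle is constructing this retraction and showing it is $1$-Lipschitz; I would simply cite \cite[Prop.~3.8]{lang-injective-hulls} for that rather than reprove it.

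For (iii), when $|X|=k$ the tight span is the union of the bounded faces of the polyhedron
\begin{align*}
	P(X)=\{f\in\R^X : f(x)+f(x')\geq d_X(x,x') \text{ for all } x,x'\}.
\end{align*}
This is Dress's description \cite{dress-tight-span}. Bounded faces of a polyhedron are compact convex polytopes, and they meet in common faces, which gives the finite polytopal complex structure.

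For (iv), I would use injectivity. The composite $A\hookrightarrow X\hookrightarrow T(X)$ is isometric, and $T(A)$ is the injective hull of $A$. By the universal property \cite[2.10]{isbell-six-theorems}, there is a $1$-Lipschitz extension $\iota\colon T(A)\to T(X)$ with $\iota(h_a)=h_a$. To get $\iota(f)|_A=f$, I would use the explicit formula
\begin{align*}
	\iota(f)(x)=\sup_{a\in A}\bigl[d_X(x,a)-f(a)\bigr] \quad\text{for } x\in X\setminus A,
\end{align*}
and then make this function minimal, in the spirit of Lang's extension lemma. The step I expect to be hardest is showing that $\iota$ is isometric rather than merely $1$-Lipschitz. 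For this I would use restriction: $g\mapsto g|_A$ is $1$-Lipschitz for the sup-norm, so $d_\infty(f,f')\le d_\infty(\iota f,\iota f')$, which together with the $1$-Lipschitz bound for $\iota$ gives equality.
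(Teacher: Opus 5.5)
The gap is in (iv). Parts (i)--(iii) are fine and match the paper, which likewise just cites injectivity for (ii) and the literature for (iii). The one slip is in (i): the triangle inequality you need is $d_X(x',x'')-d_X(x,x'')\le d_X(x,x')$, which is an equality at $x''=x$. The one you wrote is not an equality there.

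In (iv) you get the $1$-Lipschitz property for one map and try to get $\iota(f)|_A=f$ for a different one. The first is an arbitrary extension $\iota$ supplied by injectivity of $T(X,d_X)$. The second is an explicit pointwise formula followed by an unspecified ``make minimal'' step. A formula tells you nothing about an abstractly chosen extension. Conversely, the explicit map is not known to be $1$-Lipschitz unless the minimalization is done by a $1$-Lipschitz retraction. So your restriction argument for isometry does not close for either map.

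The formula itself is also wrong. $\bar f(x)=\sup_{a\in A}[d_X(x,a)-f(a)]$ need not satisfy $\bar f(x)+\bar f(y)\ge d_X(x,y)$ for $x,y\notin A$. Hence it need not lie in $\Delta(X)$, and lowering it can never produce an extremal function. Take the $4$-cycle $a,x,b,y$ with unit edges, $A=\{a,b\}$, and $f\equiv 1\in T(A,d_X|_A)$. Your formula gives $\bar f(x)=\bar f(y)=0<2=d_X(x,y)$. The natural extension that does lie in $\Delta(X)$ is the largest one, $\tilde f(x)=\inf_{a\in A}[f(a)+d_X(a,x)]$.

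The missing idea is that you need no formula: the map from injectivity already restricts correctly. Use the identity $d_\infty(g,h_a)=g(a)$ for $g$ in a tight span. This gives $\iota(f)(a)=d_\infty(\iota(f),\iota(h_a))\le d_\infty(f,h_a)=f(a)$, so $\iota(f)|_A\le f$. The restriction $\iota(f)|_A$ still satisfies $g(a)+g(a')\ge d_X(a,a')$, and $f$ is minimal among such functions. Indeed, if $g\le f$ then $f(a)=\sup_{a'}[d_X(a,a')-f(a')]\le\sup_{a'}[d_X(a,a')-g(a')]\le g(a)$. Hence $\iota(f)|_A=f$, and your restriction argument then yields the isometry.

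The paper goes the other way. It takes Lang's canonical embedding \cite[Prop.~3.5(1)]{lang-injective-hulls}, which comes with $\iota(f)|_A=f$ and isometry built in. It then uses the same identity only to check $\iota(h_a)=h_a$, via $d_\infty(\iota(h_a),h_a)=h_a(a)=0$.
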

\begin{proof}
	\textit{(i)} See the proof of \cite[Thm.~2.1]{isbell-six-theorems} (routine calculation using the triangle inequality).
	\textit{(ii)} $T(X,d_X)$ is injective \cite[2.9]{isbell-six-theorems}, and injective metric spaces are geodesic \cite[Sec.~2]{lang-injective-hulls}.
	\textit{(iii)} See the discussion following \cite[Prop.~3.5]{lang-injective-hulls}.
	\textit{(iv)} The embedding is \cite[Prop.~3.5(1)]{lang-injective-hulls}. Since $d_\infty(f, h_a) = f(a)$ for all $f \in T(X,d_X)$ \cite[Eq.~(3.2)]{lang-injective-hulls}, we get $d_\infty(\iota(h_a), h_a) = h_a(a) = 0$.
\end{proof}

\begin{example}
	\label{ex:tight-span-4pts}
	Perhaps the simplest example of an injective space is a metric tree with its geodesic distance. In fact, by \cite[Thm.~8]{dress-tight-span}, $T(X, d_X)$ is the smallest $\R$-tree that contains $X$ when $\hyp(X)=0$. Another simple example is when $|X|=4$: $T(X, d_X)$ is isometric to a rectangle $[0, L_1] \times [0, L_2]$ equipped with the $\ell^1$ metric and with 4 edges glued to its vertices \cite[Ex.~(1.16)]{dress-tight-span}; see \Cref{fig:tight_span_4_pts}. Since $X$ embeds isometrically in $T(X, d_X)$, distances in $X$ can be read off from this picture, e.g.\ $d_X(x_1, x_2) = a_1 + L_1 + a_2$.
\end{example}
\begin{figure}
	\centering
	\begin{tikzpicture}[scale=1.25]
	\tikzmath{
		\pt=0.025;
		\a = 1;
		\b = 2;
		\d1 = 1;
		\d2 = 0.75;
		\d3 = 0.7;
		\d4 = 0.9;
		\t1 = 45;
		\t2 = 45;
		\t3 = 45;
		\t4 = 45;
		\pax1 =  0;	\pay1 =  0;
		\pax2 =-\b;	\pay2 =  0;
		\pax3 =-\b;	\pay3 =-\a;
		\pax4 =  0;	\pay4 =-\a;
		\pbx1 =  \pax1+cos(\t1)*\d1;		\pby1 =  \pay1+sin(\t1)*\d1;
		\pbx2 =  \pax2-cos(\t2)*\d2;		\pby2 =  \pay2+sin(\t2)*\d2;
		\pbx3 =  \pax3-cos(\t3)*\d3;		\pby3 =  \pay3-sin(\t3)*\d3;
		\pbx4 =  \pax4+cos(\t4)*\d4;		\pby4 =  \pay4-sin(\t4)*\d4;
	}

	\fill[pattern={Lines[angle=45, distance=3pt, line width=0.3pt]}, pattern color=black!50] (\pax3,\pay3) rectangle (\pax1,\pay1);

	\draw[fill] (\pax1,\pay1) circle [radius=\pt] node[below left=2pt, fill=white, inner sep=1pt]{$c_1$};
	\draw[fill] (\pax2,\pay2) circle [radius=\pt] node[below right=2pt, fill=white, inner sep=1pt]{$c_2$};
	\draw[fill] (\pax3,\pay3) circle [radius=\pt] node[above right=2pt, fill=white, inner sep=1pt]{$c_3$};
	\draw[fill] (\pax4,\pay4) circle [radius=\pt] node[above left=2pt, fill=white, inner sep=1pt]{$c_4$};
	
	\draw[fill] (\pbx1,\pby1) circle [radius=\pt] node[above right]{$x_1$};
	\draw[fill] (\pbx2,\pby2) circle [radius=\pt] node[above left ]{$x_2$};
	\draw[fill] (\pbx3,\pby3) circle [radius=\pt] node[below left ]{$x_3$};
	\draw[fill] (\pbx4,\pby4) circle [radius=\pt] node[below right]{$x_4$};
	
	\draw (\pax1,\pay1) -- (\pax2,\pay2) node[midway, above]{$L_1$};
	\draw (\pax2,\pay2) -- (\pax3,\pay3);
	\draw (\pax3,\pay3) -- (\pax4,\pay4);
	
	\draw (\pax4,\pay4) -- (\pax1,\pay1) node[midway, right]{$L_2$};

	
	

	\draw (\pax1,\pay1) -- (\pbx1,\pby1) node[midway, below=6pt, right=2pt]{$a_1$};
	\draw (\pax2,\pay2) -- (\pbx2,\pby2) node[midway, below=5pt, left=1pt]{$a_2$};
	\draw (\pax3,\pay3) -- (\pbx3,\pby3) node[midway, above left]{$a_3$};
	\draw (\pax4,\pay4) -- (\pbx4,\pby4) node[midway, above=3pt, right=1pt]{$a_4$};
\end{tikzpicture}
	\caption{The tight span of any four-point metric space $X=\{x_1, x_2, x_3, x_4\}$ is isometric to a rectangle $[0, L_1] \times [0, L_2]$ with the $\ell^1$ metric and 4 edges of lengths $a_1, a_2, a_3, a_4$ glued to its vertices.}
	\label{fig:tight_span_4_pts}
\end{figure}

\begin{remark}
	\label{rmk:polytopal_structure}
	$T(X, d_X)$ can be a polytopal complex even if $X$ is not finite, but we restrict to the finite case for simplicity.
\end{remark}

We exploit the polytopal structure of the tight span via the following decomposition.
\begin{defn}[Blocks and cut-vertices, cf.~\cite{bc-tree}]
	\label{def:blocks_cuts}
	Let $T$ be a connected polytopal complex. A vertex $v \in T$ is a \emph{cut-vertex} if $T \setminus \{v\}$ is disconnected. We say that $v$ \emph{separates} two subsets $A, B \subset T$ if $A \setminus \{v\}$ and $B \setminus \{v\}$ are contained in different connected components of $T \setminus \{v\}$. A maximal subcomplex $B \subset T$ that does not have cut-vertices is called a \emph{block}. We denote the set of cut-vertices of $T$ as $\cut(T)$ and the set of blocks as $\blocks(T)$.
\end{defn}

\begin{defn}[Block-cut tree, cf.~\cite{bc-tree}]
	The \emph{block-cut tree} of a connected polytopal complex $T$ is the graph $\bctree(T)$ with nodes $\blocks(T) \cup \cut(T)$ and edge set
	\begin{equation*}
		E \coloneq \{ \{B,c\} \mid B \in \blocks(T), c \in \cut(T) \text{ and } c \in B \}.
	\end{equation*}
\end{defn}

It is straightforward to show that $\bctree(T)$ is a tree for any connected polytopal complex by generalizing \cite[Thm.~1]{bc-tree}.
This tree is the combinatorial device that keeps track of how the blocks of the tight span are attached to one another.
Given a finite metric space $(X, d_X)$, we denote the set of blocks and cut-vertices of $T(X, d_X)$ by $\blocks(X)$ and $\cut(X)$, respectively.
Similarly, we write $\bctree(X)$ to denote the block-cut tree of $T(X, d_X)$.

\begin{example}
	\label{ex:block_cut_tree}
	In \Cref{fig:tight_span_4_pts}, let $B$ be the central rectangle $[0, L_1] \times [0, L_2]$ and $E_i$ the edge attached to $B$ at $c_i$. Then $\cut(X) = \{c_1, c_2, c_3, c_4\}$ and $\blocks(X) = \{B, E_1, E_2, E_3, E_4\}$, while $\bctree(X)$ is the graph on $\blocks(X) \cup \cut(X)$ with edges $\{B, c_i\}$ and $\{c_i, E_i\}$ for $1 \leq i \leq 4$.
\end{example}

With this language in place, we can project points and tuples to a chosen block by collapsing everything outside the block onto the unique separating cut-vertex.

\begin{defn}
	Let $(X, d_X)$ be a finite metric space and $B \in \blocks(X)$. We define $P_B \colon X \to B$ by
	\begin{equation*}
		P_B(x) \coloneq
		\begin{cases}
			x & \text{ if } x \in B\\
			c & \text{ if } x \notin B
		\end{cases}
	\end{equation*}
	where $c \in \cut(X) \cap B$ is the unique cut-vertex that separates $x$ from $B$. Given $\mathbf{x} = (x_1, \dots, x_n) \in X^n$, we extend the definition of $P_B$ to $X^n$ by setting $P_B(\mathbf{x}) \coloneq (P_B(x_1), \dots, P_B(x_n))$.
\end{defn}
That is, $P_B$ fixes the points of the tuple already contained in $B$ and collapses everything else onto the point where it attaches to $B$; equivalently, $P_B(x)$ is the point of $B$ nearest to $x$, since $P_B(x)$ separates $x$ from $B$ and every geodesic from $x$ to a point of $B$ therefore passes through $P_B(x)$.
For example, in \Cref{fig:tight_span_4_pts}, $P_B((x_1, x_2, c_3, c_4)) = (c_1, c_2, c_3, c_4)$.

The next lemmas show that at most one block retains a full four-point configuration under projection, that such a block exists whenever the hyperbolicity deficit is nonzero, and that projecting onto it preserves the deficit.
We first observe that the lengths $a_i$ of the edges attached to the rectangle in \Cref{fig:tight_span_4_pts} do not affect $\hyp(\mathbf{x})$.
\begin{lemma}
	\label{lemma:tight_span_4_pts}
	Let $\mathbf{x} = (x_1, x_2, x_3, x_4)$ and $\mathbf{c} = (c_1, c_2, c_3, c_4)$ as in \Cref{fig:tight_span_4_pts}. Then $\hyp(\mathbf{x}) = \hyp(\mathbf{c}) = \min(L_1, L_2)$.
\end{lemma}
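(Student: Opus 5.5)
We read off all pairwise distances from \Cref{fig:tight_span_4_pts}, using that $X$ embeds isometrically into $T(X,d_X)$ by \Cref{prop:tight_span}, and then compute the three pair sums $S(\mathbf{x})$ and $S(\mathbf{c})$ directly. In the figure, $c_1,c_2,c_3,c_4$ are the corners of the rectangle $[0,L_1]\times[0,L_2]$ with the $\ell^1$ metric. Here $c_1c_2$ and $c_3c_4$ are the sides of length $L_1$, $c_1c_4$ and $c_2c_3$ are the sides of length $L_2$, and $c_1c_3$, $c_2c_4$ are the diagonals.

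First we record the distances among the corners:
\begin{align*}
	d(c_1,c_2) = d(c_3,c_4) = L_1, \quad d(c_1,c_4) = d(c_2,c_3) = L_2, \quad d(c_1,c_3) = d(c_2,c_4) = L_1+L_2 .
\end{align*}
Next we use that each $x_i$ is joined to the rest of the tight span only through the cut-vertex $c_i$. Every geodesic from $x_i$ to $x_j$ therefore passes through $c_i$ and $c_j$, which gives $d(x_i,x_j) = a_i + d(c_i,c_j) + a_j$ for $i \neq j$.

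Then we compute the sums. Each of $s_{13,24}$, $s_{12,34}$, $s_{14,23}$ pairs every index exactly once, so each sum for $\mathbf{x}$ equals the corresponding sum for $\mathbf{c}$ plus the common constant $a_1+a_2+a_3+a_4$. Since $\hyp$ only involves differences of these sums, $\hyp(\mathbf{x}) = \hyp(\mathbf{c})$. For $\mathbf{c}$ we get
\begin{align*}
	S(\mathbf{c}) = \bigl(2(L_1+L_2),\, 2L_1,\, 2L_2\bigr).
\end{align*}
The largest entry is $2(L_1+L_2)$ and the second largest is $2\max(L_1,L_2)$. Hence
\begin{align*}
	\hyp(\mathbf{c}) = \tfrac12\bigl(2(L_1+L_2) - 2\max(L_1,L_2)\bigr) = \min(L_1,L_2).
\end{align*}

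The only step needing care is justifying the picture. This means invoking Dress's description \cite[Ex.~(1.16)]{dress-tight-span} as stated in \Cref{ex:tight-span-4pts}, and noting that the labelling puts $x_1,x_3$ (and $x_2,x_4$) at diagonally opposite corners. Degenerate cases are covered by the same formulas: if $L_1=0$ or $L_2=0$ the rectangle collapses to a segment or point and the result is $0$. A different labelling only permutes the entries of $S$ and does not change $\hyp$.
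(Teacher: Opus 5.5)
Your proof is correct and follows the paper's argument essentially verbatim. Both read off $d_X(x_i,x_j)=a_i+d_T(c_i,c_j)+a_j$, note that this shifts all three pair sums by the common constant $a_1+a_2+a_3+a_4$, and then compute $S(\mathbf{c})=\bigl(2(L_1+L_2),2L_1,2L_2\bigr)$ to obtain $\min(L_1,L_2)$; your extra justification of the distance formula via geodesics through the $c_i$ is a detail the paper leaves implicit, and it holds trivially when $a_i=0$ even though $c_i$ need not then be a cut-vertex.
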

\begin{proof}
	Denote the metric on $T(X, d_X)$ as $d_T$. Note that $d_X(x_i, x_j) = a_i + d_T(c_i, c_j) + a_j$, so for any permutation $\sigma \in \operatorname{Sym}(4)$,
	\begin{equation*}
		d_X(x_{\sigma(1)}, x_{\sigma(2)}) + d_X(x_{\sigma(3)}, x_{\sigma(4)})
		= d_T(c_{\sigma(1)}, c_{\sigma(2)}) + d_T(c_{\sigma(3)}, c_{\sigma(4)}) + (a_1 + a_2 + a_3 + a_4).
	\end{equation*}
	Thus, $S(\mathbf{x})=S(\mathbf{c})+(a_1+a_2+a_3+a_4)(1,1,1)$, and
	\begin{align*}
		\hyp(\mathbf{x})
		= \frac{1}{2}\left(S_{(1)}(\mathbf{x})-S_{(2)}(\mathbf{x})\right)
		= \frac{1}{2}\left(S_{(1)}(\mathbf{c})-S_{(2)}(\mathbf{c})\right)
		= \hyp(\mathbf{c}).
	\end{align*}
	Lastly, $s_{12,34}(\mathbf{c}) = 2L_1$, $s_{14,23}(\mathbf{c}) = 2L_2$, and $s_{13,24}(\mathbf{c}) = 2(L_1+L_2)$, so $S_{(2)}(\mathbf{c}) = 2\max(L_1, L_2)$ and $S_{(1)}(\mathbf{c}) = 2(L_1+L_2)$. Hence,
	$\hyp(\mathbf{c}) = \frac{1}{2}\left[S_{(1)}(\mathbf{c})-S_{(2)}(\mathbf{c})\right] = \min(L_1, L_2)$.
\end{proof}

The same splitting holds for the projection onto an arbitrary block of $T(X, d_X)$, which gives the following generalization.

\begin{lemma}
	\label{lemma:hyp_projection}
	Let $(X, d_X)$ be a finite metric space, $B \in \blocks(X)$, and $\mathbf{x} \in X^4$. If $|P_B(\mathbf{x})| = 4$, then $\hyp(\mathbf{x}) = \hyp(P_B(\mathbf{x}))$. Otherwise, $\hyp(P_B(\mathbf{x})) = 0$.
\end{lemma}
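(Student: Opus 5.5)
We regard $X$ as a subset of $T \coloneq T(X,d_X)$ via \Cref{prop:tight_span}(i), so distances between points of $X$ are distances $d_T$ in $T$. Quadruples in $B$ are then evaluated with $d_T$. The whole argument rests on one identity: for every $x \in X$ and every $y \in B$,
\begin{equation*}
	d_T(x,y) = d_T(x,P_B(x)) + d_T(P_B(x),y).
\end{equation*}
This holds because $T$ is geodesic (\Cref{prop:tight_span}(ii)) and, when $x\notin B$, the cut-vertex $P_B(x)$ separates $x$ from $B$, so every geodesic from $x$ to $y$ passes through $P_B(x)$. If $x \in B$ the identity is trivial with $P_B(x)=x$.

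\emph{Case $|P_B(\mathbf{x})| = 4$.} I will first show the identity extends to pairs $x_i,x_j$ whose projections differ:
\begin{equation*}
	d_T(x_i,x_j) = a_i + d_T(P_B(x_i),P_B(x_j)) + a_j, \qquad a_i \coloneq d_T(x_i,P_B(x_i)).
\end{equation*}
If $x_i \notin B$, the points $x_i$ and $x_j$ lie in different components of $T\setminus\{P_B(x_i)\}$. Indeed, $x_j$ is either in $B$ or attached to $B$ through a different cut-vertex $P_B(x_j)\neq P_B(x_i)$. In the latter case, the block-cut tree $\bctree(X)$ being a tree shows that $P_B(x_i)$ separates $x_i$ from $x_j$. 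Hence any geodesic from $x_i$ to $x_j$ passes through $P_B(x_i)$, and symmetrically through $P_B(x_j)$. Concatenating gives the displayed formula. Since every pairing $\{i,j\}\cup\{k,\ell\}$ uses each index exactly once, exactly as in \Cref{lemma:tight_span_4_pts} we get
\begin{equation*}
	S(\mathbf{x}) = S(P_B(\mathbf{x})) + (a_1+a_2+a_3+a_4)(1,1,1).
\end{equation*}
The upper gap is invariant under adding a constant vector, so $\hyp(\mathbf{x})=\hyp(P_B(\mathbf{x}))$.

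\emph{Case $|P_B(\mathbf{x})|<4$.} Here two entries of $P_B(\mathbf{x})$ coincide. By the observation in \Cref{inv:hyperbolicity}, the deficit of any quadruple with a repeated point vanishes, so $\hyp(P_B(\mathbf{x}))=0$ immediately.

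The main obstacle is the separation claim in the first case: that distinct projections force the geodesic between $x_i$ and $x_j$ to pass through both $P_B(x_i)$ and $P_B(x_j)$. This needs a careful use of the tree structure of $\bctree(X)$ and of the uniqueness of the separating cut-vertex built into the definition of $P_B$. I would argue it via paths in $\bctree(X)$. The component of $T\setminus\{P_B(x_i)\}$ containing $x_i$ corresponds to the branch of $\bctree(X)$ at $P_B(x_i)$ not containing $B$. The point $x_j$ lies in a different branch, namely through $B$.
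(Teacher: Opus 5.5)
Your proposal is correct and follows the paper's proof essentially step for step. The repeated-point case is handled by \Cref{inv:hyperbolicity}. In the main case you show $d_X(x_i,x_j)=a_i+d_T(c_i,c_j)+a_j$ using geodesics through separating cut-vertices, so $S$ shifts by the constant vector $(a_1+a_2+a_3+a_4)(1,1,1)$ and the upper gap is unchanged.

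The only differences are minor. The paper proves the separation claim by a short contradiction with the uniqueness of the separating cut-vertex rather than via branches of $\bctree(X)$: if $x_j$ lay in the component of $T\setminus\{c_i\}$ containing $x_i$, then $c_i$ would separate $x_j$ from $B$, forcing $c_j=c_i$. It then splits $d_T(c_i,x_j)$ at $c_j$ by applying your first identity with $x=x_j$ and $y=c_i\in B$. Doing it in these two steps also settles the order in which the geodesic meets $c_i$ and $c_j$, which your ``concatenating'' step uses without stating it.
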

\begin{proof}
	Denote the metric on $T(X, d_X)$ by $d_T$; it restricts to $d_X$ on $X$ by \Cref{prop:tight_span}\,\textit{(i)}.
	Suppose first that $|P_B(\mathbf{x})| < 4$. Then two entries of $P_B(\mathbf{x})$ coincide, so $\hyp(P_B(\mathbf{x})) = 0$ (see \Cref{inv:hyperbolicity}).

	Now suppose $|P_B(\mathbf{x})| = 4$, and write $c_i \coloneq P_B(x_i)$ and $a_i \coloneq d_T(x_i, c_i)$. We show that
	\begin{equation*}
		d_X(x_i, x_j) = a_i + d_T(c_i, c_j) + a_j \quad \text{for all } i \neq j.
	\end{equation*}
	By \Cref{prop:tight_span}\,\textit{(ii)}, any two points of $T(X, d_X)$ are joined by a geodesic, and such a geodesic passes through every point that separates its endpoints.
	If $x_i \notin B$, then $c_i$ separates $x_i$ from $B$. It also separates $x_i$ from $x_j$: otherwise $x_j$ would lie in the same connected component of $T(X, d_X) \setminus \{c_i\}$ as $x_i$, so $x_j \notin B$ and $c_i$ would separate $x_j$ from $B$, and uniqueness in the definition of $P_B$ would give $c_j = c_i$.
	Hence $d_X(x_i, x_j) = a_i + d_T(c_i, x_j)$, which holds trivially if $x_i \in B$, since then $c_i = x_i$.
	Similarly, if $x_j \notin B$, then $c_j$ separates $x_j$ from $c_i \in B \setminus \{c_j\}$, so $d_T(c_i, x_j) = d_T(c_i, c_j) + a_j$, which again holds trivially if $x_j \in B$.
	Consequently, each of the three pair sums of $\mathbf{x}$ exceeds the corresponding pair sum of $P_B(\mathbf{x})$ by $a_1 + a_2 + a_3 + a_4$, and $\hyp(\mathbf{x}) = \hyp(P_B(\mathbf{x}))$ follows as in the proof of \Cref{lemma:tight_span_4_pts}.
\end{proof}

Thus projecting onto a block preserves the hyperbolicity deficit whenever four distinct points remain.
The next lemma shows that this happens for at most one block.

\begin{lemma}
	\label{lemma:block_projection}
	Let $(X, d_X)$ be a finite metric space and fix $n \geq 3$. For any $\mathbf{x} \in X^n$, there exists at most one $B \in \blocks(X)$ such that $|P_B(\mathbf{x})| = n$.
\end{lemma}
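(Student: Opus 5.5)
Suppose, for contradiction, that two distinct blocks $B \neq B'$ both satisfy $|P_B(\mathbf{x})| = |P_{B'}(\mathbf{x})| = n$. The plan is to use the block-cut tree $\bctree(X)$, which is a tree. The nodes $B$ and $B'$ are then joined by a unique path in $\bctree(X)$. Since the graph is bipartite between blocks and cut-vertices, this path passes through at least one cut-vertex. Let $c$ be the cut-vertex on this path adjacent to $B$, so $c \in B$. The key observation is that $c$ separates $B$ from $B'$: every point of $B' \setminus \{c\}$ lies in the connected component of $T(X,d_X) \setminus \{c\}$ determined by the branch of $\bctree(X)$ through $c$ not containing $B$. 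This is the standard correspondence between components of $T \setminus \{c\}$ and branches of $\bctree(T)$ at $c$, generalized as in \cite[Thm.~1]{bc-tree}.

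Next I would sort the points $x_i$ by which side of $c$ they lie on. Let $U$ be the union of the components of $T(X,d_X)\setminus\{c\}$ that meet $B\setminus\{c\}$; I expect this to be a single component, since $B$ has no cut-vertices. Call the remaining part $V$. Every $x_i \in V$ is separated from $B$ by $c$, so $P_B(x_i) = c$. Because $|P_B(\mathbf{x})| = n$, the entries of $P_B(\mathbf{x})$ are pairwise distinct, so at most one $x_i$ lies in $V \cup \{c\}$. Hence at least $n-1 \geq 2$ of the points lie in $U$.

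Symmetrically, every $x_i \in U$ is separated from $B'$ by the same cut-vertex $c$, or possibly by some cut-vertex closer to $B'$ on the path. Either way, all points in $U$ are projected by $P_{B'}$ to a single cut-vertex $c'$, namely the cut-vertex on the path adjacent to $B'$. This is because any geodesic from such a point into $B'$ must pass through the whole chain of cut-vertices on the path, and the first one it meets in $B'$ is $c'$. So $P_{B'}$ sends $n-1 \geq 2$ entries to the same point, contradicting $|P_{B'}(\mathbf{x})| = n$. This is where $n \geq 3$ enters: with $n=2$ one point on each side would suffice, which is why the statement requires $n \geq 3$.

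The main obstacle is rigor in the topological bookkeeping. One must show that all points of $X$ lying on the $B$-side of the path (including points in $B$ itself) project under $P_{B'}$ to the single vertex $c'$, and that the separating cut-vertex used in the definition of $P_{B'}$ is well defined. I would handle this by proving a small claim: if $y \notin B'$ and the tree path from the node containing $y$ to $B'$ enters $B'$ through $c'$, then $c'$ is the unique cut-vertex of $B'$ separating $y$ from $B'$. This follows from the tree structure of $\bctree(X)$, since removing $c'$ disconnects the branch containing $y$ from $B'\setminus\{c'\}$.
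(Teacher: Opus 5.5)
Your proposal is correct and follows essentially the same route as the paper. Injectivity of $P_B$ forces at least $n-1\geq 2$ entries onto the $B$-side of the cut-vertex adjacent to $B$ on the $\bctree(X)$-path to $B'$, and $P_{B'}$ sends all of these to the single cut-vertex $c'$ of $B'$ on that path. The paper says this via concatenation of $\bctree(X)$-paths rather than components of $T(X,d_X)\setminus\{c\}$, and argues directly for every other block rather than by contradiction, but the content is the same.
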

\begin{proof}
	Suppose there exists $B_0 \in \blocks(X)$ such that $|P_{B_0}(\mathbf{x})| = n$. Choose any $B \in \blocks(X) \setminus \{B_0\}$, and let $c_0 \in \cut(X) \cap B_0$ and $c \in \cut(X) \cap B$ be the cut-vertices that separate $B_0$ and $B$. Note that the unique path in $\bctree(X)$ from $B_0$ to $B$ contains $c$. Each point $x_i$ corresponds to a node of $\bctree(X)$, namely to $x_i$ itself if it is a cut-vertex and to the unique block containing it otherwise. If $P_{B_0}(x_i) \neq c_0$, then the path in $\bctree(X)$ from the node corresponding to $x_i$ to $B_0$ does not contain $c_0$. However, concatenating it with the path from $B_0$ to $B$ yields a path from the node corresponding to $x_i$ to $B$ that contains $c$. Hence, $P_B(x_i) = c$. Since $|P_{B_0}(\mathbf{x})| = n$, there are at least $n-1 \geq 2$ points $x_i$ such that $P_{B_0}(x_i) \neq c_0$, so by the previous argument, there are at least two $x_i$ such that $P_B(x_i) = c$. Hence, $|P_B(\mathbf{x})| < n$.
\end{proof}

The next lemma shows that such a block exists whenever the hyperbolicity deficit is nonzero.

\begin{lemma}
	\label{lemma:hyp_not_0}
	Let $(X, d_X)$ be a finite metric space and $\mathbf{x} \in X^4$. If $\hyp(\mathbf{x}) \neq 0$, then there exists a block $B \in \blocks(X)$ such that $|P_B(\mathbf{x})| = 4$.
\end{lemma}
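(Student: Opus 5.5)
We argue by contraposition: assuming that $|P_B(\mathbf{x})| \leq 3$ for every block $B \in \blocks(X)$, we show $\hyp(\mathbf{x}) = 0$. If two entries of $\mathbf{x}$ coincide there is nothing to prove (\Cref{inv:hyperbolicity}), so assume the $x_i$ are distinct. The idea is to use the block-cut tree $\bctree(X)$ to find a cut-vertex, or a pair of them, that splits the quadruple as $\{x_i,x_j\}\mid\{x_k,x_\ell\}$. From such a split we read off a tree-like relation among the three pair sums.

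The first step is to locate the quadruple in $\bctree(X)$. Each $x_i$ corresponds to a node $v_i$, namely $x_i$ itself if it is a cut-vertex and otherwise the unique block containing it, as in the proof of \Cref{lemma:block_projection}. Let $S$ be the subtree of $\bctree(X)$ spanned by $v_1,\dots,v_4$. For a block node $B$ of $S$, the values $P_B(x_i)$ are determined by the branches of $\bctree(X)$ at $B$: points in the same branch through a cut-vertex $c \in B$ all project to $c$, and points lying in $B$ project to themselves. Hence the hypothesis $|P_B(\mathbf{x})|\le 3$ says that at every block node of $S$ some two of the four points project to the same point of $B$. I then combine this with the combinatorics of a tree spanned by four nodes: either some internal node of $S$ has four distinct branches containing the four points, or there is a path in $S$ whose removal puts a pair on each side. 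The first alternative would give a block with $|P_B(\mathbf{x})|=4$ if that node were a block node, so it can only be a cut-vertex, which is again a separating point. In either case I expect to obtain cut-vertices $c, c'$, possibly equal, lying on the tree path between the two pairs, such that every geodesic from $\{x_i,x_j\}$ to $\{x_k,x_\ell\}$ passes through both $c$ and $c'$.

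The second step is a metric computation of the same kind as in the proof of \Cref{lemma:hyp_projection}. Since $T(X,d_X)$ is geodesic (\Cref{prop:tight_span}) and geodesics pass through separating points, we get
\begin{align*}
d_X(x_i,x_k)=d_T(x_i,c)+d_T(c,c')+d_T(c',x_k)
\end{align*}
and the analogous identities for the three other cross pairs. It follows that
\begin{align*}
s_{ik,j\ell}(\mathbf{x})=s_{i\ell,jk}(\mathbf{x})=d_T(x_i,c)+d_T(x_j,c)+d_T(x_k,c')+d_T(x_\ell,c')+2d_T(c,c').
\end{align*}
By the triangle inequality, $s_{ij,k\ell}(\mathbf{x})\le d_T(x_i,c)+d_T(x_j,c)+d_T(x_k,c')+d_T(x_\ell,c')$, so $s_{ij,k\ell}$ is at most the common value of the other two sums. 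The two largest pair sums therefore coincide, and $\hyp(\mathbf{x})=0$.

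The main obstacle is the combinatorial first step. I have to show carefully that the absence of a block with four distinct projections forces a separating cut-vertex, or a chain of them, between two pairs. Two situations need attention. One is when several $x_i$ lie in the same block $B$ but are not cut-vertices; then they project to themselves, so the hypothesis forces at least two of the points to share a branch at $B$. The other is making sure the separating point is a cut-vertex and not an interior point of a block. I would handle this by considering the block nodes of $S$ of degree at least three, together with block nodes containing some of the $x_i$, and checking that each contributes a grouping consistent with a single split $\{i,j\}\mid\{k,\ell\}$.
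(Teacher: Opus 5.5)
Your second step is sound, and a single separating cut-vertex already suffices (take $c=c'$). If $c$ separates $\{x_i,x_j\}$ from $\{x_k,x_\ell\}$, both cross sums equal $\sum_{m=1}^4 d_T(x_m,c)$, and by the triangle inequality $s_{ij,k\ell}(\mathbf{x})$ is at most this value, so $\hyp(\mathbf{x})=0$.

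The gap is the first step. It carries the whole content of the lemma, and the proposal only says what you expect to find. The dichotomy you invoke does not match the hypothesis. Points of $B$ that are not cut-vertices are separate fibres of $P_B$, so $|P_B(\mathbf{x})|\le 3$ is not the same as ``fewer than four branches at $B$''. Your plan to check that each block node of $S$ ``contributes a grouping consistent with a single split'' also leaves two things open: which cut-vertex is eventually used, and why the separating point is a cut-vertex rather than an interior point of a block. These are exactly the two worries you flag, and they need an argument.

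Here is one way to close the gap. For a node $w$ of $\bctree(X)$, call the following sets the \emph{parts} at $w$:
\begin{itemize}
\item for each component $K$ of $\bctree(X)\setminus\{w\}$, the set $\{i : v_i\in K\}$;
\item for each $i$ with $v_i=w$, the singleton $\{i\}$.
\end{itemize}
If some part at $w$ has at least three elements, step to the neighbour of $w$ in that component. At the new node, the component containing $w$ carries at most one index. So this walk never backtracks, and since the tree is finite it stops at a node $w^\ast$ all of whose parts have size at most $2$.
\begin{itemize}
\item If $w^\ast$ is a cut-vertex $c$, merge its parts into a split $\{i,j\}\mid\{k,\ell\}$; then $c$ separates the two pairs.
\item If $w^\ast$ is a block $B$, its parts are exactly the fibres of $P_B$ on $\{1,2,3,4\}$. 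Your hypothesis allows at most three of them, each of size at most $2$, so one fibre has size $2$. Singletons come from points of $B$ that are not cut-vertices, so this fibre is $\{i,j\}$ with $P_B(x_i)=P_B(x_j)=c$ for a cut-vertex $c\in B$. Then $v_k$ and $v_\ell$ lie in the component of $\bctree(X)\setminus\{c\}$ containing $B$, so $c$ separates $\{x_i,x_j\}$ from $\{x_k,x_\ell\}$.
\end{itemize}
You should also state explicitly that separation in $\bctree(X)$ implies separation in $T(X,d_X)$; the paper uses this implicitly too.

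With this filled in, your contrapositive is a genuine alternative to the paper's direct proof. The paper embeds the tight span of $\{x_1,\dots,x_4\}$ into $T(X,d_X)$ via \Cref{prop:tight_span}\,\textit{(iv)}. It then uses \Cref{lemma:tight_span_4_pts} to see that the rectangle is nondegenerate, since $\min(L_1,L_2)=\hyp(\mathbf{x})>0$. The rectangle has no cut points, so it lies in one block, and the disjoint pendant edges give four distinct projections. Your route avoids the four-point tight span entirely and uses only that $T(X,d_X)$ is geodesic with a block-cut tree, at the price of the tree combinatorics above.
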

\begin{proof}
	Since $\hyp(\mathbf{x}) \neq 0$, the points $x_1, \dots, x_4$ are pairwise distinct (see \Cref{inv:hyperbolicity}), and their tight span $T(\mathbf{x})$ is as in \Cref{fig:tight_span_4_pts}, with central rectangle $B_0 = [0, L_1] \times [0, L_2]$ and edges $E_i$ from $x_i$ to the corners $c_i$ of $B_0$.
	By \Cref{lemma:tight_span_4_pts}, $\min(L_1, L_2) = \hyp(\mathbf{x}) > 0$, so the corners $c_i$ are pairwise distinct and the edges $E_i$ are pairwise disjoint.
	By \Cref{prop:tight_span}\,\textit{(iv)}, applied to $A = \{x_1, x_2, x_3, x_4\}$, there is an isometric embedding $T(\mathbf{x}) \hookrightarrow T(X, d_X)$ that is compatible with the embeddings of $\mathbf{x}$ and $X$, and we identify $T(\mathbf{x})$ with its image.
	Removing a single point from the rectangle $B_0$ leaves it connected, so no cut-vertex of $T(X, d_X)$ separates two points of $B_0$. Hence $B_0$ is contained in a single block $B \in \blocks(X)$.

	We claim that $P_B(x_i) \in E_i$ for every $i$. If $x_i \in B$, then $P_B(x_i) = x_i \in E_i$. Otherwise, $P_B(x_i)$ separates $x_i$ from $B$. If $P_B(x_i)$ were not contained in $E_i$, then $E_i$ would be a path in $T(X, d_X) \setminus \{P_B(x_i)\}$ from $x_i$ to $c_i \in B$, a contradiction.
	Since the edges $E_i$ are pairwise disjoint, $|P_B(\mathbf{x})| = 4$.
\end{proof}

It follows that every $\mathbf{x} \in X^4$ with $\hyp(\mathbf{x}) \neq 0$ has a unique block $B_{\mathbf{x}} \in \blocks(X)$ with $|P_{B_{\mathbf{x}}}(\mathbf{x})| = 4$.
We equip each block $B \in \blocks(X)$ with the metric inherited from $T(X, d_X)$ and with the measure $\mu_B \coloneq (P_B)_\# \mu_X$, which moves all mass outside $B$ onto the cut-vertex where it is attached, and write $\nu_{\hyp,B} \coloneq \hyp_\#(\mu_B^{\otimes 4})$ for the distribution of hyperbolicity deficits of the resulting mm-space.
With this, we can state and prove~\Cref{thm:main-block-decomposition}.
\begin{theorem}
	\label{thm:block_decomposition_hyp}
	Let $(X, d_X, \mu_X)$ be a finite mm-space. Then, for every Borel set $D \subset \R$ with $0 \notin D$,
	\begin{equation*}
		\nu_{\hyp,X}(D) = \sum_{B \in \blocks(X)} \nu_{\hyp,B}(D).
	\end{equation*}
	Moreover, $\nu_{\hyp,X}(\{0\}) = 1 - \sum_{B \in \blocks(X)} \nu_{\hyp,B}(\R \setminus \{0\})$.
\end{theorem}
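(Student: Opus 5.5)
The plan is to partition the set $H \coloneq \{\mathbf{x} \in X^4 \mid \hyp(\mathbf{x}) \neq 0\}$ according to blocks, and then compare each piece with the corresponding block distribution. For $B \in \blocks(X)$, set
\[
A_B \coloneq \{\mathbf{x} \in X^4 \mid |P_B(\mathbf{x})| = 4\}.
\]
By \Cref{lemma:block_projection} with $n=4$, the sets $A_B$ are pairwise disjoint. By \Cref{lemma:hyp_not_0}, $H \subset \bigcup_B A_B$. Since $X$ is finite, all sets are finite and measurability is trivial.

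Fix a Borel set $D$ with $0 \notin D$. The first step is to compute
\[
\nu_{\hyp,X}(D) = \mu_X^{\otimes 4}(\hyp^{-1}(D)) = \sum_B \mu_X^{\otimes 4}\bigl(\hyp^{-1}(D) \cap A_B\bigr).
\]
This uses $\hyp^{-1}(D) \subset H$ together with the disjointness of the $A_B$.

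The second step is to rewrite each block term. Since $\mu_B^{\otimes 4} = (P_B)_\#\mu_X^{\otimes 4}$, where $P_B$ acts componentwise on $X^4$, we have
\[
\nu_{\hyp,B}(D) = \mu_X^{\otimes 4}\bigl(\{\mathbf{x} \mid \hyp(P_B(\mathbf{x})) \in D\}\bigr).
\]
Here $\hyp(P_B(\mathbf{x}))$ is computed with the metric of $T(X,d_X)$ restricted to $B$. This agrees with the metric $d_X$ on points of $X$, by \Cref{prop:tight_span}.

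Now apply \Cref{lemma:hyp_projection}. If $\mathbf{x} \notin A_B$, then $\hyp(P_B(\mathbf{x})) = 0 \notin D$. If $\mathbf{x} \in A_B$, then $\hyp(P_B(\mathbf{x})) = \hyp(\mathbf{x})$. Hence $\{\mathbf{x} \mid \hyp(P_B(\mathbf{x})) \in D\} = \hyp^{-1}(D) \cap A_B$. Summing over $B$ gives the first identity.

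The second identity follows by taking $D = \R \setminus \{0\}$ and using that $\nu_{\hyp,X}$ is a probability measure.

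The main point needing care is the pushforward identification of $\nu_{\hyp,B}$. One has to check that the product of pushforwards equals the pushforward of the product measure under the componentwise map. One also has to check that $\hyp$ evaluated on $B$-valued tuples only uses the induced metric, which is consistent with how the lemmas computed $\hyp(P_B(\mathbf{x}))$. Both facts are routine for finite spaces. The genuine geometric content, namely uniqueness and existence of the block and invariance of the deficit, is already supplied by the preceding lemmas.
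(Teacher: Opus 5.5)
Your argument is correct and matches the paper's proof step for step: the paper likewise groups the quadruples with $\hyp(\mathbf{x})\in D$ by their unique block $B_{\mathbf{x}}$ (your sets $A_B$) using \Cref{lemma:block_projection,lemma:hyp_not_0}, then replaces each piece by $\{\mathbf{x}\mid\hyp(P_B(\mathbf{x}))\in D\}$ via \Cref{lemma:hyp_projection}, identifies this with $\nu_{\hyp,B}(D)$ through $\mu_B=(P_B)_\#\mu_X$, and obtains the atom at $0$ by normalization. Your explicit verification of the set identity $\{\mathbf{x}\mid\hyp(P_B(\mathbf{x}))\in D\}=\hyp^{-1}(D)\cap A_B$ spells out what the paper's third equality leaves implicit.
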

\begin{proof}
	Let $D \subset \R$ be a Borel set with $0 \notin D$. Then
	\begin{align*}
		\nu_{\hyp,X}(D)
		&= \mu_X^{\otimes 4}(\{ \mathbf{x} \in X^4 \mid \hyp(\mathbf{x}) \in D \}) \\
		&= \sum_{B \in \blocks(X)} \mu_X^{\otimes 4}(\{ \mathbf{x} \in X^4 \mid \hyp(\mathbf{x}) \in D \text{ and } B_{\mathbf{x}} = B \}) \\
		&= \sum_{B \in \blocks(X)} \mu_X^{\otimes 4}(\{ \mathbf{x} \in X^4 \mid \hyp(P_B(\mathbf{x})) \in D \}) \\
		&= \sum_{B \in \blocks(X)} \nu_{\hyp,B}(D),
	\end{align*}
	where the second equality groups the quadruples $\mathbf{x}$ by their unique blocks $B_{\mathbf{x}}$, provided by \Cref{lemma:block_projection,lemma:hyp_not_0} since $0 \notin D$, and the third follows from \Cref{lemma:hyp_projection} for the same reason.
	The formula for $\nu_{\hyp,X}(\{0\})$ then follows by normalization.
\end{proof}
\begin{corollary}
	\label{cor:block_decomposition_hyp}
	Let $(X, d_X, \mu_X)$ be a finite mm-space with $\blocks(X) = \{B_0\} \sqcup \mathcal{B}_{\mathcal{I}}$, where every block in $\mathcal{B}_{\mathcal{I}}$ is isometric to an interval.
	Then $\nu_{\hyp,X} = \nu_{\hyp,B_0}$.
\end{corollary}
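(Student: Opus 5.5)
The plan is to apply \Cref{thm:block_decomposition_hyp} and show that every block in $\mathcal{B}_{\mathcal{I}}$ contributes nothing on $\R\setminus\{0\}$. Fix $I \in \mathcal{B}_{\mathcal{I}}$. Since $I$, with the metric inherited from $T(X,d_X)$, is isometric to an interval, every quadruple in $I^4$ satisfies the four-point condition. To see this directly, place four points of an interval in order at positions $t_1\le t_2\le t_3\le t_4$ (after relabelling, which only permutes the components of $S$). The two "crossing" pair sums $s_{13,24}$ and $s_{14,23}$ both equal $t_4+t_3-t_2-t_1$. The third sum $s_{12,34}$ is no larger, being $(t_2-t_1)+(t_4-t_3)$. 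Hence $S_{(1)}=S_{(2)}$ and $\hyp=0$ on $I^4$.

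The measure $\mu_I=(P_I)_\#\mu_X$ is supported on $I$, so $\mu_I^{\otimes 4}$ is concentrated on $I^4$. It follows that $\nu_{\hyp,I}=\delta_0$ and $\nu_{\hyp,I}(D)=0$ for every Borel $D$ with $0\notin D$.

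Substituting into \Cref{thm:block_decomposition_hyp} gives $\nu_{\hyp,X}(D)=\nu_{\hyp,B_0}(D)$ for all Borel $D\not\ni 0$. For the atom at $0$, the second formula of the theorem gives $\nu_{\hyp,X}(\{0\})=1-\nu_{\hyp,B_0}(\R\setminus\{0\})$. Since $\nu_{\hyp,B_0}$ is a probability measure (as $\mu_{B_0}$ is a pushforward of a probability measure), this equals $\nu_{\hyp,B_0}(\{0\})$. The two measures therefore agree on every Borel set: split an arbitrary Borel $E$ as $(E\setminus\{0\})\cup(E\cap\{0\})$.

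The only mildly delicate point is that $\hyp$ on $B_0$ and on $I$ is computed with the metric inherited from $T(X,d_X)$, not with an intrinsic metric. For an interval block this is exactly what "isometric to an interval" provides, so the computation above applies. No genuine obstacle is expected.
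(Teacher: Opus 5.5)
Your proposal is correct and follows essentially the same route as the paper: you apply \Cref{thm:block_decomposition_hyp}, note that interval blocks satisfy $\nu_{\hyp,B}=\delta_0$ and so contribute nothing away from $0$, and recover the atom at $0$ by normalization. The only difference is that you verify the four-point condition on an interval by an explicit computation with ordered positions $t_1\le t_2\le t_3\le t_4$, whereas the paper simply observes that an interval is an $\R$-tree.
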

\begin{proof}
	A block $B$ that is isometric to an interval is an $\R$-tree, so every quadruple in $B$ has hyperbolicity deficit $0$ (see \Cref{inv:hyperbolicity}) and $\nu_{\hyp,B}=\delta_0$.
	In particular, the blocks in $\mathcal{B}_{\mathcal{I}}$ contribute nothing to the sum in \Cref{thm:block_decomposition_hyp}, and $\nu_{\hyp,X}$ and $\nu_{\hyp,B_0}$ therefore agree on every Borel set not containing $0$, and hence also on $\{0\}$, so the two measures coincide.
\end{proof}
\begin{remark}
If $\mu_X$ has full support, Dress's criterion \cite[Thm.~8]{dress-tight-span} amounts to the statement that $\nu_{\hyp,X}=\delta_0$ if and only if all blocks of $T(X,d_X)$ are intervals, each of which satisfies $\nu_{\hyp,B}=\delta_0$.
The block decomposition in \Cref{thm:block_decomposition_hyp} captures how $\nu_{\hyp,X}$ arises from the hyperbolicity distributions of the blocks more generally.
\end{remark}

As a consequence of the block decomposition, we also find that $\nu_{\hyp,X}$ can be estimated block by block: blocks isometric to intervals have $\nu_{\hyp,B} = \delta_0$ and can be skipped, and for every other block one estimates $\nu_{\hyp,B}$ by its empirical distribution as in \Cref{cor:empirical_Kn_invariant_range} (with $\lambda = 1$, see \Cref{inv:hyperbolicity}), adds these away from $0$, and obtains the atom at $0$ by normalization.\footnote{Due to sampling error, the estimated masses of the blocks away from $0$ may add up to slightly more than $1$, which can produce a negative probability at $0$.}
The $\W_1$ errors of the blocks add up, so to reach a total $\W_1$ error $\epsilon\diam(X)$, we may assign to each sampled block $B$ the share $\epsilon\diam(X) \cdot \diam(B)/\sum_{B'} \diam(B')$ of this error, where the sum runs over the blocks that are not intervals.
Measured in units of the diameter of the respective block, this gives every block the same error $\epsilon' \coloneq \epsilon\diam(X)/\sum_{B'} \diam(B')$, so with $p = 1$ each block requires $N_{\lambda=1}(\epsilon')$ samples instead of the $N_{\lambda=1}(\epsilon)$ samples needed for $X$.
Since $N_{\lambda=1}$ grows like $\epsilon^{-6}$, this pays off when few blocks are not intervals and their diameters add up to much less than $\diam(X)$.
In the situation of \Cref{cor:block_decomposition_hyp}, the number of samples drops by roughly a factor $(\diam(X)/\diam(B_0))^6$.
The cut-vertices of $T(X,d_X)$ that are attached to blocks with nonzero hyperbolicity distribution can be computed from the distance matrix in $O(|X|^3)$ time \cite{DressEtAl2010}.

\section{Applications in Hamming geometry and molecular evolution}
\label{sec:hamming}

Molecular evolution studies how the genomes of organisms change over time.
Abstractly, genome sequences are words of finite length over the four-character nucleotide alphabet $\{A,C,G,T\}$.
After alignment, distances between them are measured by counting the number of sites at which two sequences differ, commonly called the \emph{Hamming distance} \cite{Hamming1950}.
We refer to the metric space of all possible sequences of a fixed length together with this metric as the \emph{Hamming cube}.
This space provides a natural ambient geometry for the evolution of genomes.
The genomes of a population determines a probability measure on the Hamming cube.
The shape of this distribution then reflects information about mutation frequency, selection, and interactions between sites.
Accordingly, the distributions of $n$-invariants obtained from the observed population can be used to study the evolutionary dynamics of a population.

In \Cref{sec:hamming-sitewise}, we show how the distributions of $n$-invariants on the Hamming cube are determined by the site-wise nucleotide frequencies when sites are independent.
We then apply this to the uniform measure (\Cref{subsec:hamming-uniform}) and to single-nucleotide substitution models (\Cref{sec:simulated-evolution}), which serve as references for the SARS\nobreakdash-CoV\nobreakdash-2 genome data in \Cref{subsec:sars-cov2}.

\subsection{The site-wise structure of Hamming geometry}
\label{sec:hamming-sitewise}

Let $\Sigma$ be a finite alphabet of cardinality $k\ge 2$, equipped with the discrete metric ${d_\Sigma(\sigma,\tau)\coloneq\mathbf{1}_{\sigma\neq \tau}}$.
\begin{defn}
	For $L\ge 1$, the \emph{Hamming cube of dimension $L$ over $\Sigma$} is the $L$-fold product space $H(\Sigma,L)\coloneq\Sigma^L$ equipped with the $\ell^1$-product metric, called the \emph{Hamming metric}, given for $x,x'\in H(\Sigma,L)$ by
	\begin{align}
		d_H(x,x') \coloneq \sum_{m=1}^L d_\Sigma(x_m,x_m') = \#\{m\in\{1,\ldots,L\}\mid x_m\neq x_m'\}.
	\end{align}
\end{defn}

We often write $H$ for $H(\Sigma,L)$ when alphabet and length are clear from context.
Elements ${x=(x_m)_{m=1}^{L} \in H}$ are called \emph{words} or \emph{sequences}, and we denote the projection to the $m$-th character or \emph{site} of a word by $\pi_m \colon H\to\Sigma$, $\pi_m(x)\coloneq x_m$.
For an $n$-tuple of words $\mathbf{x}=(x_1,\dots,x_n)\in H^n$, we apply this projection componentwise and write $\pi_m(\mathbf{x})\coloneq(\pi_m(x_1),\dots,\pi_m(x_n))\in\Sigma^n$.
Characters of $\Sigma$ are also called \emph{alleles}, and we say a site $m$ is \emph{monoallelic}, \emph{biallelic}, or \emph{triallelic} if the number of distinct alleles occurring in $\pi_m(\mathbf{x})$ is exactly $1$, $2$, or $3$, respectively.

Consider a product probability measure $\mu$ on $H$, and write $\rho_m\coloneq(\pi_m)_\#\mu$ for its single-site marginals; the product assumption means that $\mu$ is the product of these marginals, i.e.\ $\mu=\bigotimes_{m=1}^L\rho_m$.
Under the canonical identification $H^n=(\Sigma^L)^n\cong(\Sigma^n)^L$, we have $\mu^{\otimes n}= \bigotimes_{m=1}^L\rho_m^{\otimes n}$.
In other words, drawing $n$ independent words from $\mu$ is equivalent to drawing, independently at each site $m$, an $n$-tuple of characters with distribution $\rho_m^{\otimes n}$.
For a probability measure $\rho$ on $\Sigma$ and $j\geq 2$, define its $j$-th \emph{collision moment} by $c_j(\rho)\coloneq\sum_{\sigma\in\Sigma}\rho(\sigma)^j$.
Equivalently, $c_j(\rho)$ is the probability that $j$ independent draws from $\rho$ all agree.

A particularly well-behaved class of $n$-invariants consists of those that are given by a sum of single-site contributions.
\begin{defn}
	Assume $Z$ is a commutative monoid with measurable addition.
	We say a $Z$-valued metric $n$-invariant $F$ is \emph{site-wise additive on the Hamming cube $H$} if for every $\mathbf{x}\in H^n$ one has
	\begin{align}
		F(\mathbf{x})=\sum_{m=1}^L F(\pi_m(\mathbf{x})).
	\end{align}
\end{defn}

For a site-wise additive $n$-invariant $F$, the pushforward measure $\nu_{F,\mu}$ is fully determined by the single-site measures $\nu_{F,\rho_m}$.
To make this precise, for a commutative monoid $Z$ with measurable addition and $r\geq 1$ let
$\summ_r \colon Z^r \to Z$ be $r$-fold summation, i.e.\ the map given by $\summ_r(z_1,\dots,z_r)\coloneq \sum_{m=1}^r z_m$.
For probability measures $\nu,\lambda$ on $Z$, denote their additive convolution by
$\nu\ast\lambda \coloneq (\summ_2)_\#(\nu\otimes\lambda)$, or equivalently $(\nu\ast\lambda)(\{z\})=\sum_{z_1+z_2=z}\nu(\{z_1\})\lambda(\{z_2\})$.
We write $\nu^{\ast r}\coloneq (\summ_r)_\#(\nu^{\otimes r})$ for the $r$-th power under additive convolution.

\begin{lemma}
\label{lem:hamming-sitewise-pgf}
Let $Z$ be a commutative monoid with measurable addition and let $F$ be a $Z$-valued metric $n$-invariant that is site-wise additive on $H$.
Then
\begin{align}
	\nu_{F,\mu}=\nu_{F,\rho_1}\ast\cdots\ast\nu_{F,\rho_L}.
\end{align}
In particular, if $\rho_m=\rho$ for every $m$, then $\nu_{F,\mu}=(\nu_{F,\rho})^{\ast L}$.
\end{lemma}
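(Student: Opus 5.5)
The plan is to write $\nu_{F,\mu}$ as a pushforward of the product measure $\mu^{\otimes n}$ and to factor the map $F\circ\Psi_n$ through the site-wise decomposition. By definition, $\nu_{F,\mu}=(F\circ\Psi_n)_\#\mu^{\otimes n}$ on $H^n$. Under the identification $H^n\cong(\Sigma^n)^L$, $\mathbf{x}\mapsto(\pi_1(\mathbf{x}),\dots,\pi_L(\mathbf{x}))$, we have $\mu^{\otimes n}=\bigotimes_{m=1}^L\rho_m^{\otimes n}$, as noted before the statement. Site-wise additivity says that
\begin{align*}
F\circ\Psi_n=\summ_L\circ\bigl(F_1\times\cdots\times F_L\bigr)\circ(\pi_1,\dots,\pi_L),
\end{align*}
where $F_m\colon\Sigma^n\to Z$ sends an $n$-tuple of characters to $F$ of its distance matrix in $(\Sigma,d_\Sigma)$. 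In other words, $F_m=F\circ\Psi_n$ computed on the single-site space, which is exactly the map whose pushforward of $\rho_m^{\otimes n}$ defines $\nu_{F,\rho_m}$.

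The key steps, in order, are as follows.
\begin{enumerate}
\item Check that the site-$m$ term $F(\pi_m(\mathbf{x}))$ in the definition of site-wise additivity is $F$ evaluated on the distance matrix of $\pi_m(\mathbf{x})$ in the discrete metric. This is how the notation is meant. It is consistent because $d_H=\sum_m d_\Sigma\circ(\pi_m\times\pi_m)$, so $\Psi_n^{H}(\mathbf{x})=\sum_m\Psi_n^{\Sigma}(\pi_m(\mathbf{x}))$.
\item Use functoriality of pushforwards. The pushforward of a product measure under a product map is the product of the pushforwards, so
\begin{align*}
(F_1\times\cdots\times F_L)_\#\bigotimes_m\rho_m^{\otimes n}=\bigotimes_m\nu_{F,\rho_m}.
\end{align*}
This is checked on measurable rectangles and extended by uniqueness of product measures, since the rectangles form a $\pi$-system generating the product $\sigma$-algebra.
\item Push forward along $\summ_L$. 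By definition this gives the $L$-fold convolution. Identifying $(\summ_L)_\#(\nu_1\otimes\cdots\otimes\nu_L)$ with the iterated binary convolution $\nu_1\ast\cdots\ast\nu_L$ uses associativity: write $\summ_L=\summ_2\circ(\summ_{L-1}\times\mathrm{id})$ and induct on $L$, applying step (ii) again at each stage.
\item The special case $\rho_m=\rho$ is immediate from the definition $\nu^{\ast r}=(\summ_r)_\#\nu^{\otimes r}$.
\end{enumerate}

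The only delicate point is measure-theoretic bookkeeping rather than a real obstacle. One needs measurability of $\summ_L$, which follows from the assumed measurability of addition by induction. One also needs the product-of-pushforwards identity for possibly non-discrete $Z$. In the Hamming setting everything is finite, so one could instead verify the formula pointwise on atoms, $\nu_{F,\mu}(\{z\})=\sum_{z_1+\cdots+z_L=z}\prod_m\nu_{F,\rho_m}(\{z_m\})$, using independence across sites; I would present that as the main computation and remark that the general argument is identical.
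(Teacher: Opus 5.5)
Your proposal is correct and follows the same route as the paper. Under $H^n\cong(\Sigma^n)^L$, the single-site contributions are independent with laws $\nu_{F,\rho_m}$, and site-wise additivity turns their sum into the stated convolution; you additionally make explicit the bookkeeping the paper's two-line proof leaves implicit (product of pushforwards, measurability of $\summ_L$, associativity of iterated convolution).
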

\begin{proof}
	Under $H^n\cong(\Sigma^n)^L$, the single-site contributions $F\circ\pi_m$ are independent and have distributions $\nu_{F,\rho_m}$.
	Since $F$ evaluates on $H^n$ as the sum of single-site contributions, its distribution is the stated convolution.
\end{proof}

The distance map is site-wise additive by definition, so \Cref{lem:hamming-sitewise-pgf} gives the following result.

\begin{prop}
	\label{prop:product-distance-distribution}
	Let $\mu=\bigotimes_{m=1}^L\rho_m$ be a product measure on $H$, and set $p_m\coloneq 1-c_2(\rho_m)$.
	Then the distance distribution is Poisson--binomial with probability vector $(p_1,\ldots,p_L)$:
	\begin{align}
		\nu_{\dist,\mu}
		=\operatorname{PBin}(p_1, \ldots, p_L).
	\end{align}
	In particular, if $\rho_m=\rho$ for every $m$, then $p_m=p$ and $\nu_{\dist,\mu}=\operatorname{Pois}(p)$.
\end{prop}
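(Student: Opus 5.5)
The plan is to apply \Cref{lem:hamming-sitewise-pgf} to the distance map $\dist\colon\Kn_2\to\R$, with $Z=\R$ (or $\Z_{\geq0}$) under ordinary addition, which is measurable. Site-wise additivity holds by definition of the Hamming metric. For $\mathbf{x}=(x_1,x_2)\in H^2$ we have
\begin{align*}
	\dist(\mathbf{x})=d_H(x_1,x_2)=\sum_{m=1}^L d_\Sigma(\pi_m(x_1),\pi_m(x_2))=\sum_{m=1}^L\dist(\pi_m(\mathbf{x})),
\end{align*}
where each single-site pair $\pi_m(\mathbf{x})\in\Sigma^2$ carries the discrete metric. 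Hence \Cref{lem:hamming-sitewise-pgf} gives $\nu_{\dist,\mu}=\nu_{\dist,\rho_1}\ast\cdots\ast\nu_{\dist,\rho_L}$.

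Next I would compute each single-site factor. On $(\Sigma,d_\Sigma,\rho_m)$, the distance between two independent draws $\sigma,\tau\sim\rho_m$ is $\mathbf{1}_{\sigma\neq\tau}$, so it takes values in $\{0,1\}$. The value $0$ occurs with probability $\sum_\sigma\rho_m(\sigma)^2=c_2(\rho_m)$, the collision moment. Therefore $\nu_{\dist,\rho_m}=\operatorname{Bernoulli}(p_m)$ with $p_m=1-c_2(\rho_m)$.

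The convolution of independent Bernoulli laws with parameters $p_1,\dots,p_L$ is by definition the Poisson--binomial law $\operatorname{PBin}(p_1,\ldots,p_L)$, which gives the claim. In the identical-marginal case every $p_m$ equals $p$, so the law reduces to $\operatorname{Bernoulli}(p)^{\ast L}$. This is the binomial distribution $\operatorname{Bin}(L,p)$, which I take to be what the notation $\operatorname{Pois}(p)$ in the statement intends; I would note this interpretation explicitly.

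There is no real obstacle here. The only points needing care are that the single-site invariant is the same $\dist$ applied to $\Sigma$ with the discrete metric, and the notational reading of the identical-site case.
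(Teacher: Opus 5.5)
Your proposal is correct and matches the paper's proof: each single-site law is $\operatorname{Bern}(p_m)$ with $p_m=1-c_2(\rho_m)$, and the site-wise additivity lemma convolves these into $\operatorname{PBin}(p_1,\ldots,p_L)$. Your reading of the identical-marginal case as $\operatorname{Bin}(L,p)$ is also right, since taken literally $\operatorname{Pois}(p)$ would be false (the distance is supported on $\{0,\ldots,L\}$), and it agrees with how the paper later uses the result, e.g.\ $\operatorname{Bin}(L,\tfrac{k-1}{k})$ for the uniform measure.
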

\begin{proof}
	The probability that two characters disagree is the complement of the second collision moment, i.e.\ given by $p_m=1-c_2(\rho_m)$, such that $\nu_{\dist,\rho_m}=\operatorname{Bern}(p_m)$.
	By \Cref{lem:hamming-sitewise-pgf}, we have $\nu_{\dist,\mu}=\nu_{\dist,\rho_1}\ast\cdots\ast\nu_{\dist,\rho_L} = \operatorname{Bern}(p_1) \ast \cdots \ast \operatorname{Bern}(p_L)$, which is exactly the Poisson--binomial distribution $\operatorname{PBin}(p_1, \ldots, p_L)$.
\end{proof}

For the four-point hyperbolicity deficit, recall from \Cref{inv:hyperbolicity} that $\hyp=\gap_+\circ S$, where $\gap_+(a,b,c)\coloneq \tfrac12\bigl(\max\{a,b,c\}-\med\{a,b,c\}\bigr)$ extracts the upper gap from triple of pairwise-distance sums $S$:
\begin{align}
	S(\mathbf{x}) \coloneq \bigl(s_{13,24}(\mathbf{x}),s_{12,34}(\mathbf{x}),s_{14,23}(\mathbf{x})\bigr),
	\qquad \mathbf{x}\in H^4.
\end{align}
According to \Cref{prop:invariant-postcomposition}, the hyperbolicity distribution is given by $\nu_{\hyp,X}=(\gap_+)_\#\nu_{S,X}$.
Each component $s_{ij,k\ell}$ of $S$ is site-wise additive and hence so is $S$.
Using \Cref{lem:hamming-sitewise-pgf}, the distribution of $S$ on $H^4$ is thus the convolution of its single-site distributions.
\Cref{fig:hamming-single-site-pairsums} shows the nine values $S$ can take on a single site.
The five displayed patterns admit explicit probabilities in terms of the collision moments, leading to the following result.

\begin{figure}[t]
	\centering
	\begin{tikzpicture}
	\newcommand{\site}[5]{
		\coordinate (x1) at (0,0);
		\coordinate (x2) at (1.15,0);
		\coordinate (x3) at (1.15,0.82);
		\coordinate (x4) at (0,0.82);
		\draw[thick] (x1) -- (x3);
		\draw[thick] (x2) -- (x4);
		\draw[dashed,thick] (x1) -- (x2);
		\draw[dashed,thick] (x4) -- (x3);
		\draw[dotted,thick] (x1) -- (x4);
		\draw[dotted,thick] (x2) -- (x3);
		\path (x1) node[below left=-2pt] {#1};
		\path (x2) node[below right=-2pt] {#2};
		\path (x3) node[above right=-2pt] {#3};
		\path (x4) node[above left=-2pt] {#4};
		\foreach \p in {x1,x2,x3,x4}
			\fill (\p) circle (1.25pt);
		\node at (0.575,-0.52) {#5};
	}


	\begin{scope}[shift={(0,0.65)}]
		\site{$\sigma$}{$\sigma$}{$\sigma$}{$\sigma$}{\scriptsize $S=(0,0,0)$}
	\end{scope}

	\begin{scope}[shift={(2.2,0.65)}]
		\site{$\sigma$}{$\sigma$}{$\sigma$}{$\tau$}{\scriptsize $S=(1,1,1)$}
	\end{scope}

	\begin{scope}[shift={(4.4,2.55)}]
		\site{$\sigma$}{$\sigma$}{$\tau$}{$\tau$}{\scriptsize $S=(2,0,2)$}
	\end{scope}

	\begin{scope}[shift={(4.4,0.65)}]
		\site{$\sigma$}{$\tau$}{$\sigma$}{$\tau$}{\scriptsize $S=(0,2,2)$}
	\end{scope}

	\begin{scope}[shift={(4.4,-1.25)}]
		\site{$\sigma$}{$\tau$}{$\tau$}{$\sigma$}{\scriptsize $S=(2,2,0)$}
	\end{scope}

	\begin{scope}[shift={(6.6,2.55)}]
		\site{$\sigma$}{$\sigma$}{$\tau$}{$\nu$}{\scriptsize $S=(2,1,2)$}
	\end{scope}

	\begin{scope}[shift={(6.6,0.65)}]
		\site{$\sigma$}{$\tau$}{$\sigma$}{$\nu$}{\scriptsize $S=(1,2,2)$}
	\end{scope}

	\begin{scope}[shift={(6.6,-1.25)}]
		\site{$\sigma$}{$\tau$}{$\nu$}{$\sigma$}{\scriptsize $S=(2,2,1)$}
	\end{scope}

	\begin{scope}[shift={(8.8,0.65)}]
		\site{$\sigma$}{$\tau$}{$\nu$}{$\xi$}{\scriptsize $S=(2,2,2)$}
	\end{scope}

	\begin{scope}[overlay, shift={(1.65,1.35)}, scale=0.75, transform shape]
		\draw[gray!65, rounded corners=7pt, fill=gray!4]
			(11.05,1.25) rectangle (15.0,3.8);

		\coordinate (y1) at (11.55,1.75);
		\coordinate (y2) at (14.5,1.75);
		\coordinate (y3) at (14.5,3.3);
		\coordinate (y4) at (11.55,3.3);
		\draw[thick] (y1) -- (y3);
		\draw[thick] (y2) -- (y4);
		\draw[dashed,thick] (y1) -- (y2);
		\draw[dashed,thick] (y4) -- (y3);
		\draw[dotted,thick] (y1) -- (y4);
		\draw[dotted,thick] (y2) -- (y3);
		\foreach \p in {y1,y2,y3,y4}
			\fill (\p) circle (1.5pt);
		\node[below left=-2pt] at (y1) {$\sigma_1$};
		\node[below right=-2pt] at (y2) {$\sigma_2$};
		\node[above right=-2pt] at (y3) {$\sigma_3$};
		\node[above left=-2pt] at (y4) {$\sigma_4$};
		\node at (13.025,3.53) {\scriptsize $s_{12,34}$};
		\node[rotate=90] at (11.32,2.525) {\scriptsize $s_{14,23}$};
		\node[rotate=-28] at (12.6,2.88) {\scriptsize $s_{13,24}$};
	\end{scope}
\end{tikzpicture}
	\caption{Single-site values of the pair-sum map $S$ on $(\Sigma, d_\Sigma)$. The components of $S$ correspond in order to the sums of the diagonal, horizontal, and vertical edge lengths, as indicated by the upper-right schematics.}
	\label{fig:hamming-single-site-pairsums}
\end{figure}

\begin{prop}
	\label{prop:single-site-pairsum-distribution}
	Let $\rho$ be a probability measure on $\Sigma$ and write $c_j=c_j(\rho)$.
	The induced single-site distribution of $S$ is
	\begin{align}
		\nu_{S,\rho}
		=
		\begin{multlined}[t]
		c_4\,\delta_{(0,0,0)}
		+
		4(c_3-c_4)\,\delta_{(1,1,1)}
		+
		(c_2^2-c_4)\bigl(
			\delta_{(0,2,2)}+\delta_{(2,0,2)}+\delta_{(2,2,0)}
		\bigr)
		\\
		+
		2(c_2-2c_3+2c_4-c_2^2)\bigl(
			\delta_{(1,2,2)}+\delta_{(2,1,2)}+\delta_{(2,2,1)}
		\bigr)
		+
		(1-6c_2+3c_2^2+8c_3-6c_4)\,\delta_{(2,2,2)}.
		\end{multlined}
	\end{align}
\end{prop}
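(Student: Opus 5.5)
The plan is to classify single-site quadruples $(\sigma_1,\sigma_2,\sigma_3,\sigma_4)\in\Sigma^4$, drawn from $\rho^{\otimes 4}$, by the set partition of $\{1,2,3,4\}$ they induce, i.e.\ which indices carry equal characters. The value of $S$ depends only on this partition. There are fifteen set partitions, and they group as follows:
\begin{itemize}
\item the single block, giving $(0,0,0)$;
\item four partitions of type $3+1$, giving $(1,1,1)$;
\item three of type $2+2$, giving the permutations of $(0,2,2)$;
\item six of type $2+1+1$, giving the permutations of $(1,2,2)$ in pairs;
\item the discrete partition, giving $(2,2,2)$.
\end{itemize}
For each type I read off $S$ from \Cref{fig:hamming-single-site-pairsums}. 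For example, for $\{13\},\{24\}$ one has $s_{13,24}=0$ while $s_{12,34}=s_{14,23}=2$. For $\{12\},\{3\},\{4\}$ the value is $(2,1,2)$. Each vector $(1,2,2)$-type value arises from exactly two partitions, namely the one pairing $i,j$ and the one pairing $k,\ell$ for the matching $\{ij\},\{k\ell\}$.

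Next I compute the probability of each partition class via inclusion--exclusion on collision moments. The probability that all four characters agree is $c_4$. For a fixed partition of type $3+1$, the probability is $\bbP(\sigma_1=\sigma_2=\sigma_3)-\bbP(\text{all equal})=c_3-c_4$. For a fixed $2+2$ partition, $\bbP(\sigma_1=\sigma_3,\sigma_2=\sigma_4)=c_2^2$, and subtracting the all-equal case gives $c_2^2-c_4$. For a fixed $2+1+1$ partition, say $\{12\},\{3\},\{4\}$, I start from $\bbP(\sigma_1=\sigma_2)=c_2$ and subtract the coarser partitions refining this constraint: $\{123\}\{4\}$ and $\{124\}\{3\}$ (each $c_3-c_4$), $\{12\}\{34\}$ ($c_2^2-c_4$), and all-equal ($c_4$). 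This yields $c_2-2c_3+2c_4-c_2^2$. Doubling it gives the stated coefficient for each $(1,2,2)$-type atom.

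The discrete class is obtained by normalization: $1$ minus the sum of the others, namely
\[
1-c_4-4(c_3-c_4)-3(c_2^2-c_4)-6(c_2-2c_3+2c_4-c_2^2).
\]
Simplifying gives $1-6c_2+3c_2^2+8c_3-6c_4$.

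The only delicate step is the bookkeeping for the $2+1+1$ case. There one must correctly identify which $S$-vectors coincide across partitions and subtract every coarser partition exactly once. I would double-check this by verifying normalization and, for instance, by testing the formula on the uniform measure with $k=2$, where the $(2,2,2)$ mass must vanish.
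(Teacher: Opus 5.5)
Your proof is correct and follows the paper's argument: you use the same classification of single-site patterns, read off the same atoms of $S$, and obtain the $(2,2,2)$ mass by normalization. The only difference is cosmetic: you get the triallelic coefficient by subtracting the four coarser partitions from $\bbP(\sigma_1=\sigma_2)=c_2$, whereas the paper expands $\sum_\sigma\rho(\sigma)^2\sum_{\tau\neq\nu,\ \tau,\nu\neq\sigma}\rho(\tau)\rho(\nu)$ directly, and both give $c_2-2c_3+2c_4-c_2^2$.
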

\begin{proof}
	The value $S=(0,0,0)$ occurs when all four characters agree, which happens with probability $\sum_\sigma \rho(\sigma)^4=c_4$.
	$S=(1,1,1)$ occurs when one character appears three times and another appears once, which has probability
	\begin{align}
		4\sum_\sigma \rho(\sigma)^3\bigl(1-\rho(\sigma)\bigr)=4(c_3-c_4).
	\end{align}
	The three arrangements with two pairs of equal characters correspond to $S=(0,2,2)$, $S=(2,0,2)$, and $S=(2,2,0)$, each with probability
	\begin{align}
		\sum_{\sigma\neq\tau}\rho(\sigma)^2\rho(\tau)^2=c_2^2-c_4.
	\end{align}
	The three triallelic arrangements have $S=(1,2,2)$, $S=(2,1,2)$, and $S=(2,2,1)$.
	For each of these, there are two possible positions for the repeated pair, and hence the probability is
	\begin{align}
		2\sum_\sigma \rho(\sigma)^2
		\sum_{\substack{\tau,\nu\neq\sigma\\\tau\neq\nu}}\rho(\tau)\rho(\nu)
		&=
		2\sum_\sigma \rho(\sigma)^2\bigl(1-2\rho(\sigma)+2\rho(\sigma)^2-c_2\bigr)
		=2(c_2-2c_3+2c_4-c_2^2).
	\end{align}
	The remaining probability, corresponding to four distinct characters and hence to $S=(2,2,2)$, is obtained by subtracting the probabilities of the preceding cases from $1$, giving $1-6c_2+3c_2^2+8c_3-6c_4$.
\end{proof}

\begin{prop}
	\label{prop:product-hyp-distribution}
	Let $\mu=\bigotimes_{m=1}^L\rho_m$ be a product measure on $H$.
	Then the distribution of hyperbolicity deficits on $H^4$ is
	\begin{align}
		\nu_{\hyp,\mu}
		=
		(\gap_+)_\#\bigl(\nu_{S,\rho_1}\ast\cdots\ast\nu_{S,\rho_L}\bigr).
	\end{align}
\end{prop}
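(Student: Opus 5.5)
The plan is to factor $\hyp$ through the pair-sum map $S$ and then apply the site-wise additivity machinery already set up. By \Cref{inv:hyperbolicity}, $\hyp=\gap_+\circ S$ on $\Kn_4$, where $S\colon\Kn_4\to\R^3$ is measurable (indeed linear) and $\gap_+\colon\R^3\to\R$ is continuous. \Cref{prop:invariant-postcomposition} then gives
\begin{align*}
	\nu_{\hyp,\mu}=(\gap_+)_\#\nu_{S,\mu}.
\end{align*}
This reduces the claim to identifying $\nu_{S,\mu}$ as the stated convolution.

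Next I would check that $S$ is site-wise additive on $H$, with $Z=\R^3$ (or $\Z_{\geq0}^3$) under componentwise addition. Each component $s_{ij,k\ell}(\mathbf{x})=d_H(x_i,x_j)+d_H(x_k,x_\ell)$ is a sum of two Hamming distances. Each Hamming distance is a sum over sites of $d_\Sigma$, so
\begin{align*}
	s_{ij,k\ell}(\mathbf{x})=\sum_{m=1}^L s_{ij,k\ell}(\pi_m(\mathbf{x})),
\end{align*}
where on the right $s_{ij,k\ell}$ is evaluated on the four-point configuration $\pi_m(\mathbf{x})\in\Sigma^4$ with the discrete metric. Collecting the three components gives $S(\mathbf{x})=\sum_m S(\pi_m(\mathbf{x}))$.

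Finally I would apply \Cref{lem:hamming-sitewise-pgf} to $F=S$ with the product measure $\mu=\bigotimes_m\rho_m$. This yields $\nu_{S,\mu}=\nu_{S,\rho_1}\ast\cdots\ast\nu_{S,\rho_L}$, and pushing forward along $\gap_+$ gives the statement.

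There is no real obstacle; the only point needing a little care is bookkeeping. One must verify that the single-site evaluation $S(\pi_m(\mathbf{x}))$ is the $4$-invariant evaluated on the distance matrix of $\pi_m(\mathbf{x})$ in $(\Sigma,d_\Sigma)$. One must also verify that $\nu_{S,\rho_m}$ is the law of this single-site value under $\rho_m^{\otimes4}$, which is exactly the object computed in \Cref{prop:single-site-pairsum-distribution}. Independence across sites follows from the identification $\mu^{\otimes4}=\bigotimes_m\rho_m^{\otimes4}$ used in the proof of \Cref{lem:hamming-sitewise-pgf}.
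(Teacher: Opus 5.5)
Your proposal is correct and follows the paper's own proof. Like the paper, you write $\hyp=\gap_+\circ S$, use the site-wise additivity of $S$ (inherited from its components $s_{ij,k\ell}$) with \Cref{lem:hamming-sitewise-pgf} to identify $\nu_{S,\mu}$ as the convolution of the single-site distributions, and then push forward along $\gap_+$ via \Cref{prop:invariant-postcomposition}; your extra checks on measurability and on independence across sites are left implicit in the paper's two-line argument.
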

\begin{proof}
	By \Cref{lem:hamming-sitewise-pgf}, the distribution of $S$ on $H^4$ is the convolution of its single-site distributions.
	Since $\hyp=\gap_+\circ S$, the result follows from \Cref{prop:invariant-postcomposition}.
\end{proof}
While this is not an explicit analytic expression for $\nu_{\hyp,\mu}$, it provides a method to compute $\nu_{\hyp,\mu}$ from the single-site distributions $\nu_{S,\rho_m}$.
In practice one can for example use a fast Fourier transform for the convolution step, which makes exact computation feasible for moderately large $L$.

The site-wise decomposition also shows which sites can produce a positive deficit.
The single-site patterns $(0,0,0)$, $(1,1,1)$, and $(2,2,2)$ add the same amount to all three components of $S$ and therefore do not affect $\hyp$.
Each of the remaining patterns, two pairs of equal characters or a triallelic site, favours one of the three ways of grouping the four sequences into two pairs, and $\hyp(\mathbf{x})>0$ requires sites favouring at least two different groupings.
For biallelic sites, this is exactly incompatibility in the sense of the four-gamete test \cite{HudsonKaplan1985}.
In particular, if the four sequences are related by a tree on which every site mutates at most once, every such site favours the grouping given by the tree, and $\hyp(\mathbf{x})=0$.
A positive deficit thus requires homoplasy or recombination.

The pushforward along $\gap_+$ also reveals that triallelic quadruples are the only single-site patterns for which the three components of $S$ do not have the same parity, and hence the only possible source of half-integer hyperbolicity deficits.
This motivates the following definition.
\begin{defn}
	For a probability measure $\rho$ on $\Sigma$, define its \emph{triallelic accessibility} to be the probability of drawing exactly three distinct characters in four independent trials,
	\begin{align}
	\theta(\rho)
	\coloneq
	\rho^{\otimes 4}\bigl(\{(\tau_1,\ldots,\tau_4)\in\Sigma^4\setcolon|\{\tau_1,\ldots,\tau_4\}|=3\}\bigr).
	\end{align}
\end{defn}
By \Cref{prop:single-site-pairsum-distribution}, the common coefficient of the three triallelic atoms $(1,2,2)$, $(2,1,2)$, and $(2,2,1)$ in the single-site distribution of $S$ is $\theta(\rho)/3$.

\begin{prop}
	\label{prop:half-integer-suppression}
	Let $\mu=\bigotimes_{m=1}^L\rho_m$. Then
	\begin{align}
		\nu_{\hyp,\mu}\bigl(\tfrac12+\Z_{\geq 0}\bigr)
		\leq
		\sum_{m=1}^L \theta(\rho_m).
	\end{align}
	In particular, if every $\rho_m$ is supported on at most two characters, then $\nu_{\hyp,\mu}\bigl(\tfrac12+\Z_{\geq 0}\bigr)=0$.
\end{prop}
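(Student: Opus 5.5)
The plan is a parity argument followed by a union bound. Write $S(\mathbf{x})=\sum_{m=1}^L S(\pi_m(\mathbf{x}))$ using the site-wise additivity of $S$ on $H^4$. From the nine single-site values in \Cref{fig:hamming-single-site-pairsums}, every non-triallelic pattern contributes a vector whose three components share a common parity: $(0,0,0)$, $(1,1,1)$, $(2,2,2)$, and the two-pair patterns $(0,2,2)$, $(2,0,2)$, $(2,2,0)$. The triallelic patterns $(1,2,2)$, $(2,1,2)$, $(2,2,1)$ are exactly those with mixed parity.

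\emph{Parity step.} Suppose no site of $\mathbf{x}$ is triallelic. Then all three components of $S(\mathbf{x})$ are sums of entries of equal parity at each site, so they have the same parity. Hence $S_{(1)}(\mathbf{x})-S_{(2)}(\mathbf{x})$ is even, and $\hyp(\mathbf{x})=\frac12(S_{(1)}-S_{(2)})$ is an integer. Contrapositively, we obtain the inclusion
\begin{align*}
\{\mathbf{x}\in H^4 \setcolon \hyp(\mathbf{x})\in\tfrac12+\Z_{\geq0}\}\subset\bigcup_{m=1}^L\{\mathbf{x} \setcolon \pi_m(\mathbf{x})\text{ is triallelic}\}.
\end{align*}

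\emph{Measure step.} Under $\mu^{\otimes4}=\bigotimes_m\rho_m^{\otimes4}$, the event that site $m$ is triallelic depends only on $\pi_m(\mathbf{x})\sim\rho_m^{\otimes4}$. Its probability is therefore exactly $\theta(\rho_m)$ by definition. Applying the union bound to the inclusion above gives $\nu_{\hyp,\mu}(\tfrac12+\Z_{\geq0})\leq\sum_m\theta(\rho_m)$.

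\emph{The ``in particular'' clause.} If $\rho_m$ is supported on at most two characters, then four draws can take at most two distinct values, so $\theta(\rho_m)=0$. The right-hand side then vanishes.

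The main obstacle is the parity step, specifically checking the single-site table. Two-pair patterns contribute a $0$ together with $2$'s, which is consistent because $0$ and $2$ are both even. Monoallelic patterns, single-mismatch patterns $(1,1,1)$, and four-distinct patterns $(2,2,2)$ contribute constant vectors. Once this table is verified, the rest follows immediately.
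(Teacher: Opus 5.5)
Your proposal is correct and follows essentially the same route as the paper: the same single-site parity check shows that a half-integer deficit forces a triallelic site, and the bound comes from the probability of that event. The only difference is that you apply the union bound directly, whereas the paper uses independence across sites to compute the exact value $1-\prod_{m=1}^L(1-\theta(\rho_m))$ and then bounds it by $\sum_{m=1}^L\theta(\rho_m)$; your version in fact only needs that each $\pi_m(\mathbf{x})$ has law $\rho_m^{\otimes 4}$.
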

\begin{proof}
	Let $A\subset H^4$ be the set of quadruples that are triallelic at some site.
	By \Cref{prop:single-site-pairsum-distribution}, if $\mathbf{x}\notin A$, then every summand $S(\pi_m(\mathbf{x}))$ in the site-wise decomposition $S(\mathbf{x})=\sum_{m=1}^L S(\pi_m(\mathbf{x}))$ is a vector whose components have the same parity.
	Consequently, the components of $S(\mathbf{x})$ have the same parity and $\hyp(\mathbf{x})\in\Z_{\geq 0}$ for every $\mathbf{x}\in H^4\setminus A$, so $\hyp^{-1}(\tfrac12+\Z_{\geq 0})\subseteq A$.
	Thus, $\nu_{\hyp,\mu}(\tfrac12+\Z_{\geq 0})=\mu^{\otimes 4}(\hyp^{-1}(\tfrac12+\Z_{\geq 0}))\leq\mu^{\otimes 4}(A)$.
	Independence across sites then gives $\mu^{\otimes 4}(A)=1-\prod_{m=1}^L(1-\theta(\rho_m))\leq\sum_{m=1}^L\theta(\rho_m)$.
\end{proof}
Note that, as a consequence of \Cref{prop:half-integer-suppression}, $\nu_{\hyp,\rho}$ and $\nu_{\hyp,\mu}$ are supported on $\ell_k \Z_{\geq 0}$, with lattice spacing $\ell_k=1$ when $k=2$ and $\ell_k=\tfrac{1}{2}$ when $k\geq 3$.
For the uniform measure, all half-integer bins are accordingly empty for $k=2$ (\Cref{fig:uniform-hamming-hyperbolicity}, top row).

The $(4,1)$-persistence measure $\Uvr{4,1}$ (\Cref{inv:persistence-set}) can formally be treated in the same way as the hyperbolicity deficit by convolving the single-site distributions of the vector of all six pairwise distances and then applying the persistence map.
Unfortunately, persistence does not factor through an intermediate, lower-dimensional $n$-invariant, so this does not admit a more tractable expression.
For this reason, we obtain the $(4,1)$-persistence measures primarily through sampling.
It is, however, possible to describe the support of $\Uvr{4,1}$, i.e.\ to determine the $(4,1)$-persistence set $\Dvr{4,1}$ for the Hamming cube.

\begin{prop}
	\label{prop:D41_Hypercube}
	The $(4,1)$-persistence set of the Hamming cube is given by
	\begin{align}
		\Dvr{4,1}(H(\Sigma, L)) = \{ (t_b, t_d) \in \Z^2 \setcolon 0 < t_b < t_d \leq L \text{ and } t_d \leq 2t_b \}.
	\end{align}
\end{prop}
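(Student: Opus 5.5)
The plan is to prove the two inclusions separately, using the description of $\dgmVR_{4,1}$ recalled in \Cref{inv:persistence-set} (cf.\ \cite{GomezMemoli2024}). For a quadruple $\mathbf{x}=(x_1,x_2,x_3,x_4)$, the diagram $\dgmVR_{4,1}(\mathbf{x})$ is nonempty exactly when, after relabelling, the cycle $x_1x_2x_3x_4$ satisfies
\begin{align*}
t_b\coloneq\max\{d(x_1,x_2),d(x_2,x_3),d(x_3,x_4),d(x_4,x_1)\} \;<\; t_d\coloneq\min\{d(x_1,x_3),d(x_2,x_4)\}.
\end{align*}
In that case the diagram is the single point $(t_b,t_d)$. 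So $\Dvr{4,1}(X)$ is the set of such pairs $(t_b,t_d)$ realised by quadruples in $X$, and both inclusions reduce to elementary estimates on Hamming distances.

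For the inclusion ``$\subseteq$'', take a quadruple with nonempty diagram. Hamming distances are integers in $\{0,\dots,L\}$, so $t_b,t_d\in\Z$ and $t_d\le \diam H(\Sigma,L)= L$. We have $t_b<t_d$ by the characterisation. Also $t_b>0$: if $t_b=0$, all four points coincide and there are no cycles. Finally, the triangle inequality gives $d(x_1,x_3)\le d(x_1,x_2)+d(x_2,x_3)\le 2t_b$, hence $t_d\le 2t_b$. This inclusion in fact holds in any metric space with integer-valued distances bounded by $L$.

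For the inclusion ``$\supseteq$'', fix integers $0<b<d\le L$ with $d\le 2b$. I would construct the quadruple explicitly using only two letters $\sigma\neq\tau$ of $\Sigma$, which is possible since $k\ge2$. Choose a set $P\subset\{1,\dots,L\}$ of $d$ sites and split it as $P=U\sqcup V$ with $|U|=b$ and $|V|=d-b$; this is possible because $b<d\le L$. Define the four words as follows:
\begin{itemize}
\item $x_1$ is constantly $\sigma$;
\item $x_3$ carries $\tau$ exactly on $P$;
\item $x_2$ carries $\tau$ exactly on $U$;
\item $x_4$ carries $\tau$ exactly on $V$.
\end{itemize}
The distances are then as follows.
\begin{itemize}
\item Cycle edges: $d(x_1,x_2)=d(x_3,x_4)=b$ and $d(x_2,x_3)=d(x_4,x_1)=d-b$.
\item Diagonals: $d(x_1,x_3)=d(x_2,x_4)=d$.
\end{itemize}
Since $d\le 2b$ we have $0<d-b\le b$, so the maximal cycle edge is exactly $b$, and $b<d$ is the minimal diagonal. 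By the characterisation above, $\dgmVR_{4,1}(\mathbf{x})=\{(b,d)\}$.

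The only step requiring care is the precise form of the $(4,1)$ characterisation. In particular, a nonempty diagram forces a \emph{chordless} $4$-cycle, so that the birth time is the maximum of the cycle edges and the death time is the minimum of the diagonals. Once this is taken from \cite{GomezMemoli2024}, both inclusions are the routine distance computations above.
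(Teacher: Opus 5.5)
Your proof is correct. The inclusion ``$\supseteq$'' is the paper's construction up to relabelling of sites: the paper puts the $\tau$'s of $x_2$ on the first $t_b$ sites and those of $x_4$ on the next $t_d-t_b$ sites, and pads all four words with a common letter $\xi$, which may equal $\sigma$.

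For the key inequality $t_d\le 2t_b$ in ``$\subseteq$'' you take a different and more elementary route. You use the chordless-cycle description of $\dgmVR_{4,1}$ from \Cref{inv:persistence-set}, which gives $t_d\le d(x_1,x_3)\le d(x_1,x_2)+d(x_2,x_3)\le 2t_b$ directly.

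The paper instead passes through the tight span of the quadruple (\Cref{fig:tight_span_4_pts}). It invokes \cite[Prop.~6.4, Item~3]{GomezMemoli2024} to bound the lifetime by $t_d-t_b\le\min(b,c)$, where $b,c$ are the isolation indices, i.e.\ the side lengths of the central rectangle. It then observes that every distance $d_H(x_i,x_j)$ contains $b$ or $c$ as a summand, so $\min(b,c)\le t_b$.

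Your argument needs nothing beyond the triangle inequality and the characterisation already recalled in the paper. It also makes plain that the region $t_d\le 2t_b$ constrains $(4,1)$-persistence in every metric space, not just in Hamming cubes.

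The paper's route relies on an external result. In exchange, it yields the sharper intermediate bound $t_d-t_b\le\min(L_1,L_2)=\hyp(\mathbf{x})$ via \Cref{lemma:tight_span_4_pts}. This ties the lifetime of a $(4,1)$ class to the hyperbolicity deficit and explains why such classes occur only in non-tree-like quadruples. That extra information is not needed for the proposition itself.

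You also verify $0<t_b<t_d$ explicitly, which the paper leaves implicit.
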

\begin{proof}~\\
	$(\subseteq)$\quad Since the metric $d_H$ is integer valued and has diameter $L$, $(t_b, t_d) \in \Z^2$ and $t_d \leq L$ for all $(t_b, t_d) \in \Dvr{4,1}(H(\Sigma, L))$. To show the bound $t_d \leq 2t_b$, let $X = \{x_1, x_2, x_3, x_4\} \subset H(\Sigma, L)$, and let $b$, $c$, $a_i$ (with $i=1,2,3,4$) be the isolation indices of $X$ (see \Cref{fig:tight_span_4_pts}). By \cite[Prop.~6.4, Item~3]{GomezMemoli2024}, $t_d(X) - t_b(X) \leq \min(b,c)$. Note that every $d_H(x_i, x_j)$ equals $a_i+a_j$ plus either $b$, $c$, or both, so $\min(b,c) \leq d_H(x_i, x_j)$ for all $1\leq i, j \leq 4$. In particular, $t_b(X)$ equals $d_H(x_i, x_j)$ for some $i$ and $j$, so $t_d(X) - t_b(X) \leq \min(b,c) \leq t_b(X)$. Thus, $t_d(X) \leq 2 t_b(X)$.

	$(\supseteq)$\quad Let $(t_b, t_d) \in \Z^2$ with $0 < t_b < t_d \leq L$ and $t_d \leq 2t_b$. Let $\sigma, \tau, \xi \in \Sigma$, and let $x_1 = \sigma^{t_d} \cdot \xi^{L-t_d}$, $x_2 = \tau^{t_b} \sigma^{t_d-t_b} \cdot \xi^{L-t_d}$, $x_3 = \tau^{t_d} \cdot \xi^{L-t_d}$, and $x_4 = \sigma^{t_b} \tau^{t_d-t_b} \cdot \xi^{L-t_d}$. Note that all $x_i$ share the last term $\xi^{L-t_d}$, so the distances $d_H(x_i, x_j)$ are determined by the $\sigma$ and $\tau$ components. In fact, $d_H(x_1, x_2) = d_H(x_3, x_4) = t_b$, $d_H(x_2, x_3) = d_H(x_4, x_1) = t_d - t_b$, and $d_H(x_1, x_3) = d_H(x_2, x_4) = t_d$. Since $t_b < t_d \leq 2t_b$, we have $t_d - t_b \leq t_b < t_d$, so $t_b(X) = d_H(x_1, x_2) = t_b$ and $t_d(X) = d_H(x_1, x_3) = t_d$.
\end{proof}

\subsection{The uniform measure}
\label{subsec:hamming-uniform}

We now describe the geometric properties of the Hamming cube itself, as seen by the distance distribution, the distribution of hyperbolicity deficits, and the $(4,1)$-persistence measure.
For this, equip $\Sigma$ with the uniform probability measure $\rho \coloneq \frac{1}{k}\sum_{a \in \Sigma} \delta_a$ and consider the uniform measure $\mu \coloneq \rho^{\otimes L}$ on $H(\Sigma, L)$.

The distance distribution on $H(\Sigma, L)$ is $\operatorname{Bin}(L, \frac{k-1}{k})$, as follows from \Cref{prop:product-distance-distribution}.

The distribution of hyperbolicity deficits can be computed from the single-site distribution of the pairwise sums $S$ per \Cref{prop:product-hyp-distribution}.
For the latter, observe that the collision moments of $\rho$ are given by $c_j(\rho)=k^{1-j}$ for $j=2,3,4$.
Substitution into \Cref{prop:single-site-pairsum-distribution} gives the explicit single-site distribution:
\begin{align}
	\nu_{S,\rho}
	=
	\begin{multlined}[t]
	\frac{1}{k^3}\,\delta_{(0,0,0)}
	+
	\frac{4(k-1)}{k^3}\,\delta_{(1,1,1)}
	+
	\frac{k-1}{k^3}\bigl(
		\delta_{(0,2,2)}+\delta_{(2,0,2)}+\delta_{(2,2,0)}
	\bigr)
	\\
	+
	\frac{2(k-1)(k-2)}{k^3}\bigl(
		\delta_{(1,2,2)}+\delta_{(2,1,2)}+\delta_{(2,2,1)}
	\bigr)
	+
	\frac{(k-1)(k-2)(k-3)}{k^3}\,\delta_{(2,2,2)}.
	\end{multlined}
\end{align}
From this, the distribution of hyperbolicity deficits $\nu_{\hyp,\mu}$ can be computed exactly for moderate $L$ by convolving $L$ copies of $\nu_{S,\rho}$ and pushing forward along $\gap_+$ (\Cref{prop:product-hyp-distribution}).
For the uniform measure, the upcoming proposition gives an asymptotic expression for the distribution of hyperbolicity deficits as $L\to\infty$.
See \Cref{fig:uniform-hamming-hyperbolicity} for a comparison of the exact and asymptotic distributions for various alphabet sizes and word lengths.

\begin{figure}
	\centering
	\includegraphics[width=\textwidth]{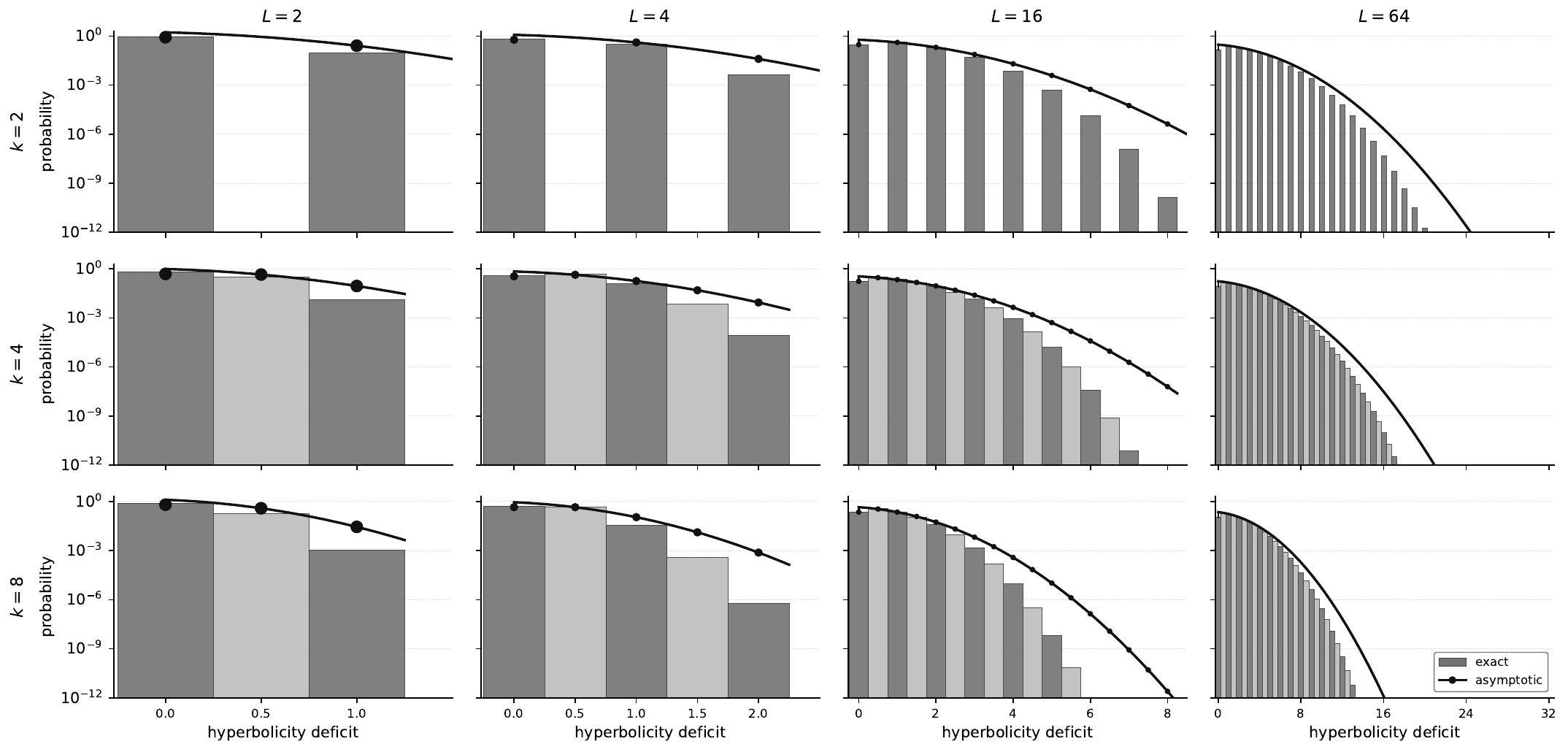}
	\caption{Exact and asymptotic distributions of hyperbolicity deficits $\nu_{\hyp,\mu}$ for the uniform measure $\mu$ on the Hamming cubes $H(\Sigma, L)$ for alphabet sizes $k=2,4,8$ (rows), and word lengths $L=2,4,16,64$ (columns).
	Bars give the exact probabilities computed by FFT-based convolution with alternating shades for integer and half-integer values.
	For $k=2$ all half-integer bins are empty.
	The black curve is the Gaussian approximation in \Cref{prop:hamming-hyp-clt} with nodes marking the value on the corresponding support $\ell_k\Z_{\geq 0}$.
	}
	\label{fig:uniform-hamming-hyperbolicity}
\end{figure}

\begin{prop}
	\label{prop:hamming-hyp-clt}
	Let $\mathcal{N}_{\mu,\sigma^2}$ denote the probability density of the normal distribution with mean $\mu$ and variance $\sigma^2$, write $\Phi_{\mu,\sigma^2}(r) \coloneq \int_{-\infty}^r \mathcal{N}_{\mu,\sigma^2}(t)\,dt$ for its cumulative distribution function, and let $\Theta$ be the Heaviside function with $\Theta(0)\coloneq\tfrac12$.
	Let $\sigma_k^2 \coloneq \frac{2(k-1)}{k^2}$ be the variance of the site-wise $\nu_{S,\rho}$.
	For $r\in\ell_k\Z$,
	\begin{align}
		\nu_{\hyp,\mu}(\{r\})
		&\sim
		6 \ell_k \Theta(r)\ \mathcal{N}_{0, \frac12 \sigma_k^2 L}(r)\ \Phi_{0,\frac32\sigma_k^2 L}(-r),
		\quad
		(L\to\infty).
	\end{align}
\end{prop}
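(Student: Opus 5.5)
The plan is to prove a multivariate local central limit theorem for the pair-sum vector $S(\mathbf{x})=\sum_{m=1}^L S(\pi_m(\mathbf{x}))$. This is a sum of $L$ i.i.d.\ lattice vectors with law $\nu_{S,\rho}$ (\Cref{lem:hamming-sitewise-pgf}). I would then push the Gaussian limit forward along $\gap_+$, as in \Cref{prop:product-hyp-distribution}, and evaluate the resulting integral in closed form.

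\emph{Step 1: the covariance of $\nu_{S,\rho}$.} Under the uniform measure, each distance $d(x_i,x_j)$ at a single site is $\operatorname{Bern}(p)$ with $p=\frac{k-1}{k}$. Distances on disjoint pairs are independent. Distances sharing one point, such as $d_{12}$ and $d_{13}$, are also independent, since $\bbP[\sigma_1\neq\sigma_2,\ \sigma_1\neq\sigma_3]=p^2$ for uniform $\rho$. Hence each component $s_{ij,k\ell}$ has variance $2p(1-p)=\sigma_k^2$, and distinct components have covariance $0$, because every cross term is a covariance of two pairwise-independent Bernoullis. So the centred sum $S-2pL(1,1,1)$ has covariance $\sigma_k^2L\,I_3$. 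The only directions that matter for $\hyp$ are orthogonal to $(1,1,1)$, because $\gap_+$ is invariant under adding a common constant to all three entries.

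\emph{Step 2: local CLT and pushforward.} I would project $S$ to the plane orthogonal to $(1,1,1)$, i.e.\ to the differences of its components. The projected single-site distribution generates a two-dimensional lattice, namely the one spanned by the differences of the nine patterns of \Cref{fig:hamming-single-site-pairsums}. I would apply the local limit theorem on this lattice (e.g.\ via characteristic functions). The lattice has to be identified carefully. For $k=2$ all three components have the same parity (\Cref{prop:half-integer-suppression}), which forces the spacing $\ell_k=1$. For $k\geq3$ the triallelic patterns make the sublattice generated by half-gaps finer, giving $\ell_k=\tfrac12$. Then $\nu_{\hyp,\mu}(\{r\})$ is a sum of the lattice probabilities over the set $\{\gap_+=r\}$, which is a union of six half-lines, one for each ordering of the components. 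Each lattice point carries Gaussian density times the cell area, and summing along a half-line turns into a one-dimensional integral. Together these produce the prefactor $\ell_k$ and the factor $6$.

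\emph{Step 3: the Gaussian computation.} Let $U_1,U_2,U_3$ be i.i.d.\ $\mathcal{N}(0,s^2)$ with $s^2=\sigma_k^2L$. The event that $U_a$ is the maximum, $U_b$ the median and the upper gap equals $r$ can be rewritten as $U_a=M+r$, $U_b=M-r$ and $U_c<M-r$, where $M=\tfrac12(U_a+U_b)$. The variables $M\sim\mathcal{N}(0,s^2/2)$, $R=\tfrac12(U_a-U_b)\sim\mathcal{N}(0,s^2/2)$ and $U_c$ are mutually independent. Moreover, $U_c-M\sim\mathcal{N}(0,\tfrac32s^2)$ is independent of $R$. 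The density of $\hyp$ at $r>0$ is therefore $6\,\mathcal{N}_{0,\frac12 s^2}(r)\,\Phi_{0,\frac32 s^2}(-r)$, which is exactly the claimed formula. At $r=0$, the lattice atom sits on the boundary of the region where $\gap_+$ is defined. Only half of each boundary cell contributes, which explains the convention $\Theta(0)=\tfrac12$.

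\emph{Main obstacle.} I expect the hardest part to be the rigorous local CLT on the correct lattice, uniformly enough to sum along the half-lines $\{\gap_+=r\}$. This requires verifying that $\nu_{S,\rho}$ is not supported on a proper coset of a sublattice. That check is where the dichotomy between $\ell_k=1$ and $\ell_k=\tfrac12$ enters, and it is also where the $r=0$ boundary term has to be handled. I would first try the standard characteristic-function proof of the local limit theorem with an explicit period check. After that, I would control the tails of the half-line sums by Gaussian domination.
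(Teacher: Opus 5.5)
Your proposal is correct and follows essentially the same route as the paper: a lattice local CLT for the i.i.d.\ site-wise sum $S$ with covariance $\sigma_k^2 L\, I_3$, pushed forward along the diagonally invariant $\gap_+$, with $\ell_k$ coming from the lattice spacing, the factor $6$ from the orderings, and $\Theta(0)=\tfrac12$ from the boundary half cell. The differences are minor: you justify the diagonal covariance via pairwise independence of the single-site Bernoulli distances, which the paper only asserts, and you apply the local limit theorem after projecting to the plane orthogonal to $(1,1,1)$ rather than in $\R^3$. You also obtain the Gaussian gap density from the independent variables $R=\tfrac12(U_a-U_b)$ and $U_c-M$, where the paper uses the joint density of two order statistics followed by completing the square.
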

\begin{proof}
	The preceding convolution and pushforward identities determine $\nu_{\hyp,\mu}$ from the distribution of $S$ on $\Sigma^4$.
	The mean of $\nu_{S,\rho}$ is $\mu_k=\frac{2(k-1)}{k}(1,1,1)$ and the covariance matrix has diagonal entries $\sigma_k^2=\frac{2(k-1)}{k^2}$ and is zero otherwise.
	Standardize the image of $S \colon H^4 \to \Z^3$ by centring it at $L\mu_k$ and rescaling by $\sqrt{L}\sigma_k$.
	Denote the corresponding affine map $A_{k,L} \colon \R^3 \to \R^3$ by $A_{k,L}(z)\coloneq\frac{z-L\mu_k}{\sqrt{L}\sigma_k}$.
	The centred and rescaled distribution of $S$ on $H$ is then $\widetilde{\nu}_{S,\mu}\coloneq(A_{k,L})_\# \nu_{S,\mu}$.

	Since $\nu_{S,\mu}=\nu_{S,\rho}^{\ast L}=(\summ_L)_\#(\nu_{S,\rho}^{\otimes L})$ by \Cref{lem:hamming-sitewise-pgf}, pushing forward along $A_{k,L}$ gives exactly the distribution of a centred and rescaled sum of $L$ i.i.d.\ random vectors.
	Consequently, the multivariate central limit theorem gives $\widetilde{\nu}_{S,\mu}\xrightarrow{w}\mathcal{N}_{\mathbf{0},\mathrm{I}_3}$ as $L\to\infty$ \cite[Thm.~29.5]{Billingsley1995}, where $\mathcal{N}_{\mathbf{0},\mathrm{I}_3}$ denotes the standard normal distribution on $\R^3$.
	Since $\mu_k$ lies on the diagonal and $\gap_+$ is invariant under diagonal translations and homogeneous under positive scalar multiplication, $\gap_+(A_{k,L}(z))=\frac{\gap_+(z)}{\sigma_k\sqrt{L}}$ for every $z\in\R^3$.
	The continuous mapping theorem~\cite[Thm.~29.2]{Billingsley1995} then gives
	\begin{align}
		(\gap_+)_\#\widetilde{\nu}_{S,\mu}\xrightarrow{w}(\gap_+)_\#\mathcal{N}_{\mathbf{0},\mathrm{I}_3}.
	\end{align}
	Write $q_{(1)}\geq q_{(2)}\geq q_{(3)}$ for the descending order statistics of the components $q_1,q_2,q_3$ under $\mathcal{N}_{\mathbf{0},\mathrm{I}_3}$, such that $2\gap_+(q)=q_{(1)}-q_{(2)}$.
	The joint density of two order statistics \cite[Sec.~2.2, Eq.~(2.2.2)]{DavidNagaraja2003} specializes here to
	\begin{align}
		f_{q_{(1)},q_{(2)}}(u,v)
		&=
		6\Phi_{0,1}(v)\,\mathcal{N}_{0,1}(v)\,\mathcal{N}_{0,1}(u)\,\mathbf{1}_{\{v\leq u\}}.
	\end{align}
	The three factors have the following interpretation: $\mathcal{N}_{0,1}(u)$ and $\mathcal{N}_{0,1}(v)$ are the Gaussian densities of the components that realize the maximum and median, while $\Phi_{0,1}(v)$ is the probability that the minimum lies below $v$, enforcing the ordering constraint $v= q_{(2)}\geq q_{(3)}$.
	Integrating $f_{q_{(1)},q_{(2)}}$ over $v$ with $u=v+t$ gives the probability density of the spacing $t=q_{(1)}-q_{(2)}=2\gap_+(q)$.
	Completing the square, we have $\mathcal{N}_{0,1}(v)\mathcal{N}_{0,1}(v+t)=\mathcal{N}_{0,2}(t)\mathcal{N}_{-\frac{t}{2},\frac{1}{2}}(v)$.
	If $Y$ is a random variable with distribution $\mathcal{N}_{-\frac{t}{2},\frac{1}{2}}$, then $\bbE[\Phi_{0,1}(Y)]=\Phi_{0,\frac{3}{2}}(-\tfrac{t}{2})$.
	Thus the probability density of $q_{(1)}-q_{(2)}$ is given by
	\begin{align}
		f_{q_{(1)}-q_{(2)}}(t)
		&=
		6\int_{-\infty}^{\infty}
			\Phi_{0,1}(v)\,\mathcal{N}_{0,1}(v)\,\mathcal{N}_{0,1}(v+t)
		\,dv
		=
		6\mathcal{N}_{0,2}(t)\Phi_{0,\frac{3}{2}}\!\left(-\frac{t}{2}\right),
		\qquad t\geq 0.
	\end{align}
	The scaled variable $\gap_+(S)/(\sigma_k\sqrt{L})$ takes values in the lattice $\frac{\ell_k}{\sigma_k\sqrt{L}}\Z_{\geq 0}$.
	The limiting variable is $\gap_+(q)$ with $q\sim\mathcal{N}_{\mathbf{0},\mathrm{I}_3}$, and since $q_{(1)}-q_{(2)}=2\gap_+(q)$ its density is given by $2 f_{q_{(1)}-q_{(2)}}(2w)$ for $w\geq 0$.
	The leading term in the multivariate lattice local expansion \cite[Thm.~22.1]{BhattacharyaRangaRao2010}, applied to the sum distribution of $S$ and summed over the lattice fibres of $\gap_+$, gives the corresponding local approximation for the pushforward: its mass at a lattice point is the limiting density at the corresponding scaled value times the scaled lattice spacing.
	Thus, for $r\in\ell_k\Z_{\geq 0}$,
	\begin{align}
		\nu_{\hyp,\mu}(\{r\})
		&\sim
		\Theta(r)\,
		\frac{\ell_k}{\sigma_k\sqrt{L}}\,
		2 f_{q_{(1)}-q_{(2)}}\!\left(\frac{2r}{\sigma_k\sqrt{L}}\right),
		\quad
		(L\to\infty).
	\end{align}
	The Heaviside factor $\Theta(0)=\frac12$ arises from the half cell at the boundary point $r=0$.
	Using the standard scaling identities $a^{-1}\mathcal{N}_{0,\sigma^2}(x/a)=\mathcal{N}_{0,a^2\sigma^2}(x)$ and $\Phi_{0,\sigma^2}(x/a)=\Phi_{0,a^2\sigma^2}(x)$, gives
	\begin{align}
		\frac{2\ell_k}{\sigma_k\sqrt{L}}\,
		f_{q_{(1)}-q_{(2)}}\!\left(\frac{2r}{\sigma_k\sqrt{L}}\right)
		&=
		6\ell_k\,
		\mathcal{N}_{0,\sigma_k^2\frac{L}{2}}(r)\,
		\Phi_{0,\frac{3}{2}\sigma_k^2 L}(-r),
	\end{align}
	which yields the stated formula.
\end{proof}

\subsection{Single-nucleotide substitution models}
\label{sec:simulated-evolution}

\begin{figure}
    \centering
    \includegraphics[width=\textwidth]{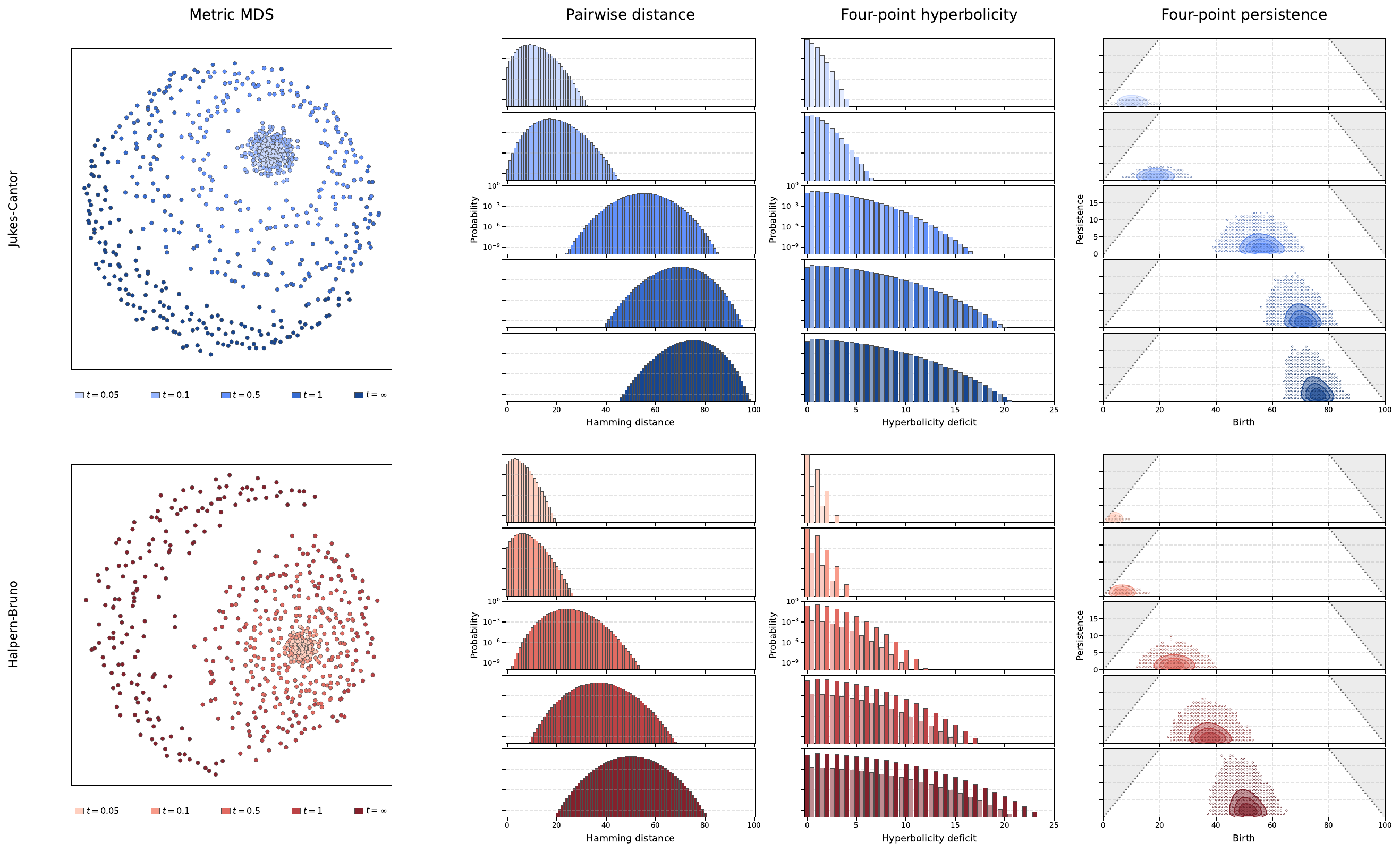}
    \caption{
        Metric $n$-point invariants of single-nucleotide substitution models with alphabet size $k=4$ and sequence length $L=100$.
        The top row shows the JC model and the bottom row shows a homogeneous HB model with JC baseline matrix and fitness profile $\phi=(0,0,-10,-10)$.
        Columns show a sampled metric-MDS visualization, the pairwise Hamming-distance distribution, the four-point hyperbolicity deficit distribution, and the $(4,1)$-persistence measure in a birth--persistence plot at times $t=0.05,0.1,0.5,1.0,\infty$.
		The distance distribution and the distribution of hyperbolicity deficits are exact, while the persistence measures are estimated from $N\approx 10^8$ independent quadruples, as determined by \Cref{cor:empirical_Kn_samples} for error $\epsilon=0.1$ and per-distribution confidence $\alpha=0.995$ (providing $0.95$ simultaneous confidence across the ten distributions).
		In the $(4,1)$-persistence diagrams, points mark observed nonzero birth--persistence classes and shaded regions correspond to Gaussian-smoothed highest-density regions containing $95\%$, $80\%$, and $50\%$ of the nonzero $(4,1)$-persistence mass.
        The grey region is excluded by the support bounds in Hamming geometry reported in \Cref{prop:D41_Hypercube}.
    }
    \label{fig:substitution-model-invariants}
\end{figure}

We now consider probability measures on $H$ derived from models for evolutionary dynamics.
Let $\mu_t$ denote the probability distribution of genomes for some organism at time $t$.
A \emph{substitution model} describes the evolution of $\mu_t$ as a continuous-time Markov chain on $H$: individual genomes undergo random \emph{nucleotide substitutions}, i.e.\ transitions $x \to x'$ based on substituting nucleotides to obtain $x'$ from $x$.
The distribution $\mu_t$ thus evolves according to the Kolmogorov forward equation
\begin{align}
	\frac{d}{dt}\mu_t = \mu_t Q,
\end{align}
with generator $Q$.
The off-diagonal entry $Q_{xx'} \geq 0$ is the rate at which a genome of type $x$ is replaced by one of type $x'$, while the diagonal entry $Q_{xx} = -\sum_{x' \neq x} Q_{xx'}$ is chosen such that each row sums to zero, which ensures that $\mu_t$ remains a probability measure for all $t \geq 0$.
In general, $Q$ may couple substitution rates at different sites, encoding epistatic interactions, where the rate of substitution at site $m$ depends on the presence of a certain allele at site $m'$.

We restrict ourselves to \emph{single-nucleotide substitution models}, where each substitution changes exactly one nucleotide, independently from all other sites.
In this case, the generator $Q$ is given by
\begin{align}
	Q = \sum_{m=1}^{L} I^{\otimes(m-1)} \otimes Q^{(m)} \otimes I^{\otimes(L-m)},
\end{align}
where $I$ denotes the $k\times k$ identity matrix and each $Q^{(m)}$ is a $k\times k$ single-site generator encoding nucleotide substitutions at site $m$.
In this case one can show that, if the initial distribution is a product measure $\mu_0 = \bigotimes_{m=1}^L \rho_{m,0}$, then the distribution remains a product measure $\mu_t = \bigotimes_{m=1}^L \rho_{m,t}$, where $\rho_{m,t} \coloneq \rho_{m,0} \exp\left(tQ^{(m)}\right)$.

One of the simplest examples for such models is the \emph{Jukes--Cantor (JC) model}, where all nucleotide substitutions occur at the same rate, i.e.\ the single-site generator is given by
\begin{align}
	Q^{(m)}_{\sigma\tau} = \frac{1}{k-1}, \quad \sigma\neq\tau,
\end{align}
and diagonal entries are determined by the condition that rows sum to zero, so that each state has total exit rate $1$ and, correspondingly, $t$ corresponds to the number of expected substitutions per site.

Another important example is the \emph{Halpern--Bruno (HB) model}, in which a site-independent baseline mutation process is modulated by site-specific fitness effects to obtain the substitution rates \cite{HalpernBruno1998}.
Let $B$ be a matrix of transition rates on $\Sigma$, and let $\phi^{(m)}\colon\Sigma\to\R$ be a function that associates a fitness value to each allele at site $m$.
The HB single-site generator at site $m$ has off-diagonal entries
\begin{align}
	Q^{(m)}_{\sigma\tau}
	&=
	B_{\sigma\tau}\,
	\frac{\phi^{(m)}(\tau)-\phi^{(m)}(\sigma)}{1-\exp\left(-(\phi^{(m)}(\tau)-\phi^{(m)}(\sigma))\right)},
	\qquad
	\sigma\neq\tau,
\end{align}
and diagonal entries are determined by the condition that rows sum to zero.
The matrix $B$ encodes the baseline substitution rates independent of evolutionary selection, reflecting for instance biochemical frequencies.
The fitness factor then makes substitutions towards higher-fitness alleles faster and substitutions towards lower-fitness alleles slower.

The JC and HB models differ in their long-time behaviour.
As $t\to\infty$ the JC model approaches the uniform distribution of \Cref{subsec:hamming-uniform}.
The homogeneous HB model with symmetric JC baseline matrix $B$ and finite fitness values instead approaches a stationary Boltzmann distribution $\rho_\phi(\sigma)\propto\exp(\phi(\sigma))$ with distance distribution $\operatorname{Bin}(L,1-c_2(\rho_\phi))$, recovering the uniform baseline only when $\phi$ is constant.

The distance distribution, the distribution of hyperbolicity deficits, and the $(4,1)$-persistence measure of the JC and HB models are determined as discussed in \Cref{prop:product-distance-distribution}, \Cref{prop:product-hyp-distribution}, and around \Cref{prop:D41_Hypercube}, respectively.
In particular, since the single-site generators of both models are irreducible, $\mu_t=\bigotimes_{m=1}^L\rho_{m,t}$ is fully supported for every $t>0$, so the sampling results of \Cref{sec:convergence-rates} apply.
See \Cref{fig:substitution-model-invariants} for a comparison of the JC model and the homogeneous HB model with JC baseline matrix and the biallelic-favouring fitness profile $\phi=(0,0,-10,-10)$ at every site (for more details, see \Cref{app:substitution-model-computation}).

The distance distribution depends on the fitness only through the collision moments $c_2(\rho_{m,t})$: it records how variable each site is, but does not distinguish two fit alleles from several fit alleles with the same collision probability.
Both models are homogeneous, so their distance distributions (\Cref{fig:substitution-model-invariants}, second column) are binomial at all times and differ only in the mismatch probability $1-c_2(\rho_t)$.
In the stationary limit $t=\infty$, the biallelic-favouring profile $\phi$ gives $c_2(\rho_\phi)\approx\frac12$, so the distance distribution of the HB model is centred near $L/2$ rather than near $\frac34L$ as for the JC model.
In contrast, the distribution of hyperbolicity deficits (\Cref{fig:substitution-model-invariants}, third column) depends on all three collision moments $c_2(\rho_{m,t})$, $c_3(\rho_{m,t})$, and $c_4(\rho_{m,t})$, and hence is sensitive to the fitness distributions at each site.
For example, by \Cref{prop:half-integer-suppression} its half-integer part is bounded by the total triallelic accessibility $\sum_{m=1}^L\theta(\rho_{m,t})$, which suppresses half-integer hyperbolicity deficits.
For the HB model, whose fitness profile favours two alleles, this produces a sawtooth pattern in which the half-integer bars lie well below the neighbouring integer bars, whereas for the JC model integer and half-integer values follow a single smooth profile.

\subsection{SARS-CoV-2 sequence data}
\label{subsec:sars-cov2}

\begin{figure}
	\centering
	\includegraphics[width=\textwidth]{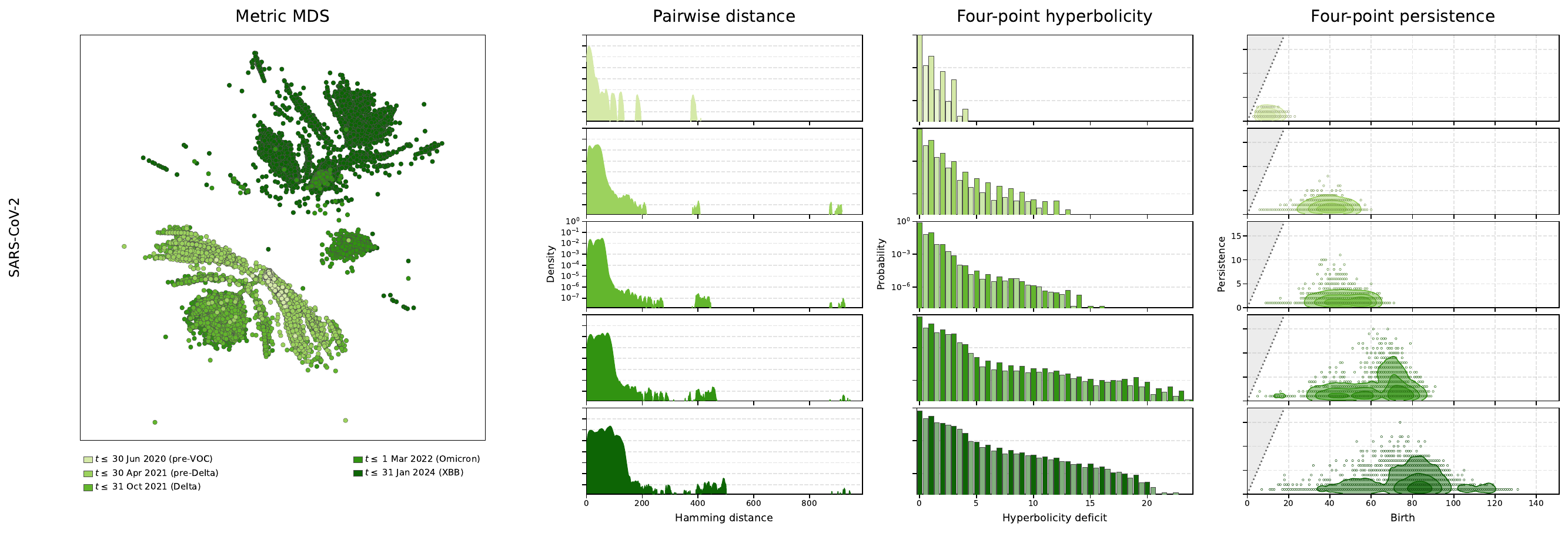}
	\caption{Geometry of the cumulative SARS-CoV-2 data, one row per collection-date cutoff, earliest at the top.
	The leftmost panel shows a $2$D metric-MDS embedding of a subsample of $50,000$ sequences from the full alignment.
	The remaining columns show the distributions of the pairwise distance $\nu_{\dist}$, the four-point hyperbolicity deficit $\nu_{\hyp}$, and the $(4,1)$-persistence measure $\Uvr{4,1}$.
	In the $(4,1)$-persistence panels, contours enclose $95\%$, $80\%$, and $50\%$ of the Gaussian-smoothed nonzero mass, while the shaded region is excluded by the $(4,1)$-persistence set of the ambient Hamming cube (\Cref{prop:D41_Hypercube}).
	The empirical curvature measures $\empirical{\mu}{4}{N}$ are obtained with the sample size $N$ required by \Cref{cor:empirical_Kn_samples} at accuracy $\epsilon=0.1$ and a combined confidence of $0.95$ across the five cutoffs, and the distributions of the three $n$-invariants are computed from them.
	}
	\label{fig:sars-cov2-invariants}
\end{figure}

We now turn from theoretical substitution models to real-world data of SARS\nobreakdash-CoV\nobreakdash-2 genome sequences collected during the COVID-19 pandemic between December 2019 and January 2024 and shared via GISAID, the global data science initiative \cite{Elbe2017} (see \Cref{app:sars-data-prep}).
The ambient metric space is the Hamming cube $H(\Sigma, L)$ over the alphabet $\Sigma=\{A,C,G,T\}$ ($k=4$) of length $L=28{,}207$.
The filtered whole-genome alignment consists of $5{,}000{,}450$ dated sequences, forming a fixed subset $X\subset H$ of $|X|=2{,}722{,}574$ distinct haplotypes.
Their full distance matrix would have about $3.7\times10^{12}$ entries and occupy more than $7$ terabytes even at two bytes per entry, so all distances are evaluated only within the sampled tuples.
Of the $L=28{,}207$ sites, $27{,}181$ are polymorphic across $X$, the remaining sites are constant and do not contribute to the Hamming distance between sequences in~$X$.

Using the collection dates of the sequences, we can construct a time series of measures over $X$.
Writing $c_{\leq t}(x)$ for the number of records of haplotype $x$ collected on or before a cutoff time $t$ and $R(t)\coloneq\sum_{x\in X}c_{\leq t}(x)$ for the total number of sequences collected by time $t$,
\begin{align}
	\mu^{\mathrm{SARS}}_{\leq t}
	&\coloneq
	\frac{1}{R(t)}\sum_{x\in X}c_{\leq t}(x)\,\delta_x.
\end{align}
We use five collection-date cutoffs, corresponding to five phases of the COVID-19 pandemic:
30~June~2020 (early outbreak, pre Variant of Concern),
30~April~2021 (Alpha-dominant with Beta and Gamma co-circulating, pre-Delta),
31~October~2021 (Delta-dominated, pre-Omicron),
1~March~2022 (the Omicron BA.1/BA.2 wave, pre-recombinant),
and 31~January~2024 (the Omicron-recombinant and XBB era, the full dataset).

The measure $\mu^{\mathrm{SARS}}_{\leq t}$ at each collection-date cutoff is a frequency-weighted cumulative sample of the observed SARS\nobreakdash-CoV\nobreakdash-2 sequences, rather than an instantaneous distribution at time $t$.
If $\mu_t^{\mathrm{SARS}}$ denotes an idealized instantaneous distribution and $\lambda(t)$ the sampling intensity at time $t$, then its population-level cumulative counterpart would correspond to the weighted average $\mu^{\mathrm{SARS}}_{\leq t} = \frac{1}{\Lambda(t)}\int_0^t \lambda(s)\mu_s^{\mathrm{SARS}}\,ds$, where $\Lambda(t)\coloneq\int_0^t\lambda(s)\,ds$ is the total number of samples collected by time $t$.

At each collection-date cutoff $t$, we estimate the curvature measure $\mu_4$ of $(X, d_H, \mu^{\mathrm{SARS}}_{\leq t})$ from a single i.i.d.\ sample of $N \approx 5.3\times10^{7}$ quadruples and compute all three distributions from it.
The hyperbolicity deficit and the $(4,1)$-persistence are evaluated on each quadruple directly, while for the distance we use that the restriction $\pi_{12} \colon \Kn_4 \to \Kn_2$ to the first two points is $1$-Lipschitz and pushes $\mu_4$ forward to $\mu_2$, so by \Cref{prop:invariant-postcomposition} the $4$-invariant $\dist \circ \pi_{12}$ has distribution $\nu_{\dist}$.
By \Cref{cor:empirical_Kn_samples} with $\epsilon = 0.1$ and confidence $1-0.05/15$, the sample size ensures $\W_1(\empirical{\mu}{4}{N}, \mu_4) \leq 0.1\diam(X)$ and hence, by \Cref{rmk:uniform-sample-reuse}, $\W_1(\empirical{\nu}{F}{N}, \nu_F) \leq 0.1\operatorname{Lip}(F)\diam(X)$ for every Lipschitz $4$-invariant $F$ evaluated on the same sample.
The $(4,1)$-persistence is $1$-Lipschitz (\Cref{prop:lipschitz-constants}), which gives accuracy $0.1\diam(X)$ for $\Uvr{4,1}$.
The hyperbolicity deficit and the distance are real-valued with range of diameter at most $\diam(X)$ (see \Cref{inv:hyperbolicity}), so at the same sample size and confidence, \Cref{prop:empirical_real_invariant} shows that $\nu_{\hyp}$ and $\nu_{\dist}$ are estimated much more accurately than $\mu_4$, to within $5.6\times10^{-4}\diam(X)$.
The confidence $1-0.05/15$ is chosen so that, using the standard union bound, we obtain an overall confidence of $0.95$ in these accuracies of the three distributions over the five cutoffs.
\Cref{fig:sars-cov2-invariants} shows the resulting distributions, with one row per cutoff from the earliest (top) to the full dataset (bottom).

\paragraph{Pairwise distance.}
The mean of the mismatch distribution $\nu_{\dist}$ (\Cref{fig:sars-cov2-invariants}, second column), and with it the nucleotide diversity of $\mu^{\mathrm{SARS}}_{\leq t}$, grows monotonically across the five cutoffs, from $10$ substitutions at 30~June~2020 to $62$ at the full dataset.
The variance grows faster than the mean, the variance-to-mean ratio rising from $2.7$ to $14$.
The distribution is multimodal, with additional prominent modes appearing in each successive cutoff.
A thin tail of large distances, visible as small isolated bumps on the right, persists at every cutoff (sampled maximum $989$), but its mass remains below $30$ parts per million (ppm) at all five cutoffs.

\paragraph{Four-point hyperbolicity deficit.}
The hyperbolicity deficits (\Cref{fig:sars-cov2-invariants}, third column) vanish on the majority of sampled quadruples throughout the pandemic, with probability $\bbP(\hyp=0)$ falling over time from $98\%$ to $55\%$.
The mean deficit grows from $0.015$ to $0.60$, and the sampled maximum from $4$ at the first cutoff to $24$ at the fourth, providing a lower bound of $\hyp(X)\geq 24$ for the Gromov hyperbolicity of the full dataset.
Half-integer hyperbolicity deficits remain suppressed throughout, visible as the lower bars of the alternating pattern, and grow from $0.2\%$ to $20\%$ of the total mass across the cutoff dates, still significantly below the uniform Hamming cube reference of $50\%$.

\paragraph{$(4,1)$-persistence.}
The $(4,1)$-persistence measure $\Uvr{4,1}$ (\Cref{fig:sars-cov2-invariants}, fourth column) is close to trivial: the nonzero mass ranges only from $68$ to $310$ ppm across the five cutoffs. 
Among the nonzero $(4,1)$-persistence classes, the proportion with lifetime $1$ ranges from $77\%$ to $94\%$ across the cutoffs, and the sampled maximum lifetime is $15$.
Their births concentrate near the modes of $\nu_{\dist}$, drifting upward across the cutoffs from around $9$ at the first to around $84$ at the last.

\paragraph{Discussion.}
The three distributions resolve increasingly specific features of the data: the distance distribution records how far apart genomes lie, the hyperbolicity deficit how often four genomes fail to fit on a tree, and $(4,1)$-persistence the rarer case in which such a failure produces a cycle.
A single expanding population would produce a smooth unimodal wave \cite{RogersHarpending1992}, whereas here the pairwise-distance distribution becomes increasingly multimodal.
Its upward drift and multimodality are consistent with the accumulation of genetically separated lineage clusters in the cumulative measures $\mu^{\mathrm{SARS}}_{\leq t}$, which also appear as separated groups in the metric-MDS embedding (\Cref{fig:sars-cov2-invariants}, first column).
Against this background, the increase of $\bbP(\hyp>0)$ from about $2\%$ at the beginning of the pandemic to $45\%$ at its end shows that non-tree-like four-point configurations become common as these clusters accumulate, whereas only $68$--$310$ ppm of quadruples have nontrivial $(4,1)$-persistence.
Thus, most violations of the four-point condition do not give rise to nontrivial $(4,1)$-persistence.
A positive deficit requires homoplasy or recombination (\Cref{sec:hamming-sitewise}), and a single mutation shared by one genome from each of two lineage clusters can already produce one.
The rise of $\bbP(\hyp>0)$ is thus in line with the many documented recurrences of mutations across lineages in SARS\nobreakdash-CoV\nobreakdash-2 \cite{vanDorpEtAl2020,MartinEtAl2021}.
The suppression of half-integer deficits provides additional sitewise information: by \Cref{prop:half-integer-suppression}, such a deficit can occur only if the sampled quartet is triallelic at some site.
Because their frequency remains below the uniform Hamming-cube reference, their suppression points to constrained triallelic accessibility, consistent with the strongly biased SARS\nobreakdash-CoV\nobreakdash-2 mutation spectrum \cite{Simmonds2020,DeMaioEtAl2021}.

The nontrivial $(4,1)$-persistence classes provide a more specific repeated-state signature.
By \Cref{inv:persistence-set}, such a class occurs when the four genomes can be cyclically ordered so that each adjacent pair agrees at more variable sites than either opposite pair; its birth is the longest cycle-edge distance and its death the shorter diagonal distance.
The concentration of birth times near the modes of the mismatch distribution therefore places these cycles at characteristic separations of the main lineage clusters, and their drift from around $9$ to around $84$ tracks the increasing separation of those clusters.
The simplest way to obtain such a cycle across two lineage clusters is a single site at which one genome from each cluster carries the same nucleotide and the other two do not, i.e.\ the same SNV arising independently in both lineages; any class produced this way has lifetime exactly $1$.
The predominance of lifetime-$1$ classes thus shows the between-lineage counterpart of the short-lived, within-lineage classes that were used to detect recurrent mutations in SARS\nobreakdash-CoV\nobreakdash-2 and shown to correlate with adaptation \cite{BleherEtAl2021}, in agreement with phylogenetic evidence of recurrent mutation \cite{vanDorpEtAl2020,MartinEtAl2021}.
Since the classes found at later cutoffs are born at larger scales, they involve the later, more separated lineage clusters, so such recurrences are not confined to the early pandemic.
The still rarer classes with longer lifetimes record a larger separation between cycle-edge and diagonal scales and reflect more complex repeated-state patterns, including multisite convergence and recombination; both within- and between-lineage recombination have been documented in SARS\nobreakdash-CoV\nobreakdash-2, albeit at a low recombination-to-mutation ratio \cite{TurakhiaEtAl2022}.

\section*{Acknowledgements}

We gratefully acknowledge all data contributors, i.e., the Authors and their Originating laboratories responsible for obtaining the specimens, and their Submitting laboratories for generating the genetic sequence and metadata and sharing via the GISAID Initiative, on which this research is based.
The authors acknowledge the use of de.NBI Cloud and the support by the High Performance and Cloud Computing Group at the Zentrum für Datenverarbeitung of the University of Tübingen and the German Federal Ministry of Education and Research (BMBF) through grant no 031 A535A.
M.B. is funded by the European Research Council (ERC) under the European Union's Horizon 2020 research and innovation programme (project PEPS, no.~101071786).
M.N. was supported by the Helmholtz Association under the joint research school “HIDSS4Health -- Helmholtz
Information and Data Science School for Health”.
M.N. and A.O. were supported by the Karlsruhe Institute of Technology and the Helmholtz Association
[POF4; 5207.0004.0012].
F.M. gratefully acknowledges support
from the National Science Foundation through grants CCF-2523653,
DMS-2524362 and RI-1901360.
This project was supported by the Vector Stiftung.

\section*{Code and data availability}

The code used to generate all figures and analyses in this article is available in the public GitHub repository \url{https://github.com/subthaumic/n-invariants} and archived on Zenodo under \url{https://doi.org/10.5281/zenodo.23210358} \cite{n-invariants-code}.
The computational procedures used for the theoretical substitution-model results are documented in \Cref{app:substitution-model-computation}.
The SARS\nobreakdash-CoV\nobreakdash-2 genome data analysed in this study were shared via GISAID, the global data science initiative \cite{Elbe2017}.
The findings of this study are based on $5{,}000{,}450$ records collected between 24~December~2019 and 31~January~2024 in 217 countries and territories, and accessible at \url{https://doi.org/10.55876/gis8.261006ze} (EPI\_SET\_261006ze, comprising $5{,}000{,}421$ of the records).
The download date and preparation of these data are documented in \Cref{app:sars-data-prep}.

\section*{Tool and computational resource disclosure}

During the preparation of this work, the authors used Codex (OpenAI; GPT-5 and GPT-6 series models) and Claude Code (Anthropic; Claude Opus~5 and~5.5) to improve the wording of parts of the text and to check statements against the cited literature.
These tools were also used to write scripts for generating figures and to refactor the research code into a clean Python package available in the public repository.
The authors reviewed and edited all output and take full responsibility for the content of this publication.

\bibliographystyle{alpha}
\bibliography{refs.bib}

\clearpage


\begin{appendix}

\section{Simulations of single-nucleotide substitution models}
\label[appendix]{app:substitution-model-computation}

For \Cref{fig:substitution-model-invariants}, we used the alphabet $\Sigma=\{0,1,2,3\}$, sequence length $L=100$, and the homogeneous product initial measure $\delta_0^{\otimes L}$.
The JC generator has total exit rate one, as in \Cref{sec:simulated-evolution}.
For the homogeneous HB model, every site uses the same symmetric JC baseline generator and fitness profile $\phi=(0,0,-10,-10)$.
At each finite time $t\in\{0.05,0.1,0.5,1\}$, we computed the one-site distribution $\rho_t=\delta_0\exp(tQ)$ by matrix exponentiation.
At $t=\infty$, we used the corresponding stationary distribution: the uniform distribution for JC and the distribution proportional to $\exp(\phi)$ for HB.

For each model, the metric-MDS panel uses $160$ independently sampled sequences from each of the five time layers, embedded jointly from their pairwise Hamming-distance matrix.
We used one random initialization and at most $500$ metric-MDS iterations.

The pairwise-distance distributions were evaluated exactly as $\operatorname{Bin}(L,1-c_2(\rho_t))$ using \Cref{prop:product-distance-distribution}.
For the distributions of hyperbolicity deficits, we enumerated the one-site distribution $\nu_{S,\rho_t}$, computed its $L$-fold convolution by fast Fourier transform, and pushed the result forward along $\gap_+$ as in \Cref{prop:product-hyp-distribution}.
The sample count $N=117{,}207{,}232$ was obtained from \Cref{cor:empirical_Kn_samples} with $n=4$, $p=2$, error $\epsilon=0.1$, Lipschitz constant $1$, and per-distribution confidence $\alpha=0.995$; by a union bound, the latter provides at least $0.95$ simultaneous confidence across the ten model--time distributions.
For each model and time, the $(4,1)$-persistence measure was then estimated from $N$ independent ordered quadruples whose sequences were sampled with replacement from $\rho_t^{\otimes L}$.
Rather than materializing the sampled sequences, we sampled multinomial counts of the one-site six-edge mismatch patterns in batches of $100{,}000$ quadruples and summed them to obtain the six Hamming distances.
For visualization, the nonzero birth--persistence mass was smoothed on its integer lattice with a Gaussian kernel of standard deviation one lattice unit, and the plotted contours enclose $95\%$, $80\%$, and $50\%$ of that smoothed mass.

All code implementing the steps described in this section is available at \url{https://github.com/subthaumic/n-invariants} \cite{n-invariants-code}.

\section{SARS-CoV-2 data availability and preparation}
\label[appendix]{app:sars-data-prep}

We used SARS\nobreakdash-CoV\nobreakdash-2 genome data shared via GISAID, the global data science initiative \cite{Elbe2017}.
We downloaded the alignment msa\_0222.fasta on 24~February~2024.
This alignment comprises 14,956,990 SARS\nobreakdash-CoV\nobreakdash-2 nucleotide sequences that were aligned to the reference sequence hCoV-19/Wuhan/WIV04/2019 (GISAID accession number EPI\_ISL\_402124) with MAFFT v7.497 \cite{Katoh2002}.

\begin{enumerate}[topsep=0.1ex, itemsep=0.1ex, parsep=0.1ex]
	\item \emph{Coverage and date filter.} Every sequence containing a character other than A, C, G, T or the gap symbol~`-', or lacking a complete YYYY-MM-DD collection date in its header, was removed. This retains exactly the completely covered, dated genomes.
	\item \emph{Gap removal.} Since the distances between indels and nucleotides are ambiguous, we removed all gap symbols~`-' from the alignment. We first discarded every alignment column in which more than $0.05\%$ of the sequences carried a gap. This removes the positions of common insertions and deletions from the alignment, leaving only the core genome. We then removed the residual sequences that still carried a gap in any of the retained columns. The result is a fixed-width alignment over $\{A,C,G,T\}$, containing only the positions on the genome that have not been subject to a widespread insertion or deletion event.
\end{enumerate}

\enlargethispage{2\baselineskip}

After these steps, $5{,}000{,}450$ complete records remain, which reduce to $U=2{,}722{,}574$ distinct haplotypes.
The findings of this study are based on these records, collected between 24~December~2019 and 31~January~2024 in 217 countries and territories, and accessible at \url{https://doi.org/10.55876/gis8.261006ze} (EPI\_SET\_261006ze, comprising $5{,}000{,}421$ of the records).
These unique haplotypes provide a subset $X \subset H(\Sigma, L)$ of the Hamming cube over the alphabet $\Sigma=\{A,C,G,T\}$ and length $L=28{,}207$, of which $27{,}181$ sites are polymorphic.
The time series $\mu^\text{SARS}_{\leq t}$ is obtained from the frequency of the haplotypes observed up to the cutoff date $t$.
At each cutoff, the $n$-tuples are drawn independently and with replacement from $\mu^\text{SARS}_{\leq t}$, so the sampling guarantees of \Cref{sec:convergence-rates} apply to $X$ with this measure.

All code implementing the steps described in this section is available at \url{https://github.com/subthaumic/n-invariants} \cite{n-invariants-code}.

\end{appendix}

\end{document}